\newcommand{\p}{\partial}
\newcommand{\Z}{\mathbb{Z}}
\newcommand{\R}{\mathbb{R}}
\newcommand{\C}{\mathbb{C}}
\newcommand{\ra}{\rightarrow}
\newcommand{\id}{\text{id}}
\newtheorem{theorem}{Theorem}
\newtheorem{corollary}[theorem]{Corollary}
\newtheorem{lemma}[theorem]{Lemma}
\newtheorem{definition}[theorem]{Definition}
\newtheorem{prop}[theorem]{Proposition}
\newtheorem*{Acknow}{Acknowledgements}
\newtheorem{remark}[theorem]{Remark}
\newcommand{\A}{\mathcal{A}}
\DeclareMathOperator{\dee}{\mathfrak{d}}
\DeclareMathOperator{\deebar}{\overline{\mathfrak{d}}}
\DeclareMathOperator{\Prim}{\mathcal{P}}
\theoremstyle{definition}
\DeclareMathOperator{\End}{End}
\DeclareMathOperator{\im}{im}
\newcommand{\lmod}{{\operatorname{\mathbf{-mod}}}}
\newtheorem*{theorem*}{Theorem}
\title{Symplectic Hodge Theory on Lie Algebroids}
\author{Rankin, Shane}
\pgfplotsset{compat=1.18}
\begin{document}

\begin{abstract}
    We explore the natural analogues of the Brylinski condition, strong Lefschetz condition, and $d\delta$-lemma in symplectic geometry originally explored by Brylinski, Mathieu, Yan, and Guillemin to the symplectic Lie algebroid case. The equivalence of the three conditions is re-established as a purely algebraic statement along with a primitive notion of the $d\delta$-lemma shown establsihed by Tseng, Yau, and Ho. We then show that the natural analogues of these in the Lie algebroid setting holds as well with examples given.
\end{abstract}
\maketitle

\tableofcontents
\section{Introduction}
\subsection{Background}
Hodge theory for symplectic manifolds was introduced by Ehresmann and Liberman and rediscovered by Brylinski \cite{BrylinskiPoisson}. Brylinski conjectured that on a symplectic manifold $(M,\omega)$ of dimension $2m$, every deRham cohomology class admits a symplectic harmonic representative. Mathieu \cite{Mathieu} and Yan \cite{YAN1996143} independently showed that the Brylinski conjecture holds true if and only if $(M^{2m},\omega)$ satisfies the strong Lefschetz property, that is for all $0 \leq k \leq m$, the map 
\begin{align*}
    [L]^k:H^{m-k}(M) &\ra H^{m+k}(M) \\
    [\alpha] &\mapsto [\omega^k \wedge \alpha]
\end{align*}
is surjective. This result was then improved by Merkulov in \cite{merkulov1998formality}, and Guillemin in \cite{Guilleminddlemma} by showing that these two conditions are equivalent to the symplectic $d \delta$-lemma in the case that $M$ was a compact manifold using Poincar\'e duality. In a key departure from traditional Riemannian Hodge theory, the symplectic Laplacian is no longer an elliptic operator as it vanishes identically, and as such the results that heavily makeup the theory no longer work. Instead, we primarily work with the observation that in the presence of a symplectic form, the space of differential forms on a smooth manifold admits the structure of an $\mathfrak{sl}_2(\R)$-module. This ``symplectic Hodge theory'' has been extended to new settings in work by Lin \cite{equivariantddlemma} and others, which require reestablishing many conditions in specific geometric settings. In this paper we work towards streamlining this by proving analogous statements about modules in a relevant module category, and applying them to symplectic Lie algebroids, a minimal smooth setting for the notions involved to make sense. We also generalize the operators $\p_+,\p_-$ of Tseng and Yau \cite{tseng2012cohomologyhodgetheorysymplectic} to this strictly algebraic setting (which we rename $\dee$ and $\deebar$ respectively), though we do not explore the related Laplacians, or Appelli and Bott-Chern cohomologies corresponding to certain combinations of these operators. We then relate the original trio of equivalences back to $\p_+\p_-$-lemma of \cite{HoSym} by reestablishing the equivalence of this to the original triple in our algebraic framework rather than the original topological framework considered. Through this generalization we are also able to lift the compactness condition, though it was free to assume in the classical case of the tangent bundle by Poincar\'e Duality, and standard algebraic topological facts relating to the relevant conditions.
\subsection{Results}
We define a category of modules $\mathscr{C}_n$ over a suitable Lie Superalgebra with properties that axiomatize the algebra of differential forms on a symplectic Manifold much in the spirit of \cite{MR1480723}.
\begin{definition}
    Let $\mathfrak{g}$ be the 5 dimensional Lie Superalgebra with basis $e,f,h,d,\delta$ with $e,f,h$ degrees $2,-2,0$ respectively and $d,\delta$ in degrees $1,-1$ respectively
    all subject to the following relations:
    \begin{align*}
        [e,f]&=h & [e,h]&=-2e & [f,h]&=2f \\
        [e,d]&=0 & [f,d]&=\delta & [h,d]&=d \\
        [e,\delta]&=d & [f,\delta]&=0 & [h,\delta]&=-\delta  \\
        [d,d]&=0 & [d, \delta]&=0 & [\delta,\delta]&=0 
    \end{align*}
    where all commutators are graded.
\end{definition}
It is well known that the space of forms of any symplectic manifold admits the structure of a $\mathfrak{g}$-module where the even part of the algebra corresponds to the standard $\mathfrak{sl}_2$ structure, $d$ corresponds to the deRham differential, and $\delta$ is defined in \cite{BrylinskiPoisson}. The algebra of differential forms on a manifold of dimension $2n$ admits finitely many $\Omega^k(M)$ such that $\Omega^k(M) \neq \{0\}$, namely when $k$ is between $0$ and $2n$, and so we restrict our attention to modules for which $h$ acts with finitely many eigenvalues. Moreover, since $h$ acts integrally and via a diagonalizable action, we want to restrict to modules for which this occurs as well.
\begin{definition}
    For $n \geq 0$, let $\mathscr{C}_{n}$ denote the full subcategory of $\mathfrak{g}\lmod$ such that $h$ acts integrally via diagonalizable action with finitely many eigenvalues between $-n$ and $n$.
\end{definition}
 Objects of this category are also referred to as ``Modules of Finite $h$-type'' or ``Modules of Finite $h$-spectrum'' in the literature. In this algebraic setting, we recover the following theorems:
\begin{theorem*}
     Let $V \in \mathrm{Ob}(\mathscr{C}_n)$. Then the following are equivalent:
    \begin{enumerate}
        \item The map $[e^i]:H^{-i}(V,d) \ra H^i(V,d)$ is an isomorphism for all $i$.
        \item The inclusion $(\ker(\delta),d) \hookrightarrow (V,d)$ is a quasi-isomorphism.
         \item The $d\delta$-lemma holds for $V$.
        \item The $\dee \deebar$-lemma holds for $V$.
    \end{enumerate}
\end{theorem*}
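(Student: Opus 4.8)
\medskip
\noindent\textbf{Proof proposal.}
The plan is to carry everything out inside $\mathscr{C}_n$ using only the Lefschetz ($\mathfrak{sl}_2$) decomposition and the primitive factorization $d=\dee+e\,\deebar$, and to establish the cycle $(3)\Rightarrow(2)\Rightarrow(1)\Rightarrow(3)$ together with $(3)\Leftrightarrow(4)$. First I would assemble the toolbox, all of it coming from the commutation relations plus the finiteness built into $\mathscr{C}_n$: as an $\mathfrak{sl}_2$-module $V=\bigoplus_{s\ge 0}e^s\Prim(V)$, with a primitive of weight $-k$ generating a copy of the $(k{+}1)$-dimensional irreducible; $\dee,\deebar$ commute with $e$ and satisfy $\dee^2=\deebar^2=\dee\deebar+\deebar\dee=0$; the identity $[f,e^s]=-s\,e^{s-1}(h+s-1)$, which gives
\begin{align*}
\delta(e^s u)=-s\,e^{s-1}\dee u+(k+1-s)\,e^s\deebar u,\qquad u\in\Prim(V)_{-k};
\end{align*}
and, after re-indexing the Lefschetz bigrading of $V$ by Lefschetz position and codistance from the top of the containing irreducible, the identification of $(V,d)$ with the total complex of an honest bigraded double complex whose two anticommuting differentials are induced by $\dee$ and by $e\,\deebar$. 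From the displayed formula one reads off that $\ker d\cap\ker\delta=\bigoplus_{s\ge 0}e^s\bigl(\ker\dee\cap\ker\deebar\bigr)$, so the ``symplectic-harmonic'' subspace $\mathcal H:=\ker d\cap\ker\delta$ is the full Lefschetz span of the $\dee\deebar$-harmonic primitives $\mathcal H_{\Prim}:=\ker\dee\cap\ker\deebar$, and in particular $\mathcal H$ is a direct sum of full $\mathfrak{sl}_2$-irreducibles; this is the bridge between the $(d,\delta)$-world on $V$ and the $(\dee,\deebar)$-world on $\Prim(V)$.

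Next I would dispatch the easy implications. For $(3)\Rightarrow(2)$ I use the $d\delta$-lemma in the form $\im d\cap\ker\delta=\ker d\cap\im\delta=\im(d\delta)$: if $\alpha\in\ker\delta$ is $d$-exact then $\alpha=d\delta\gamma=d(\delta\gamma)$ with $\delta\gamma\in\ker\delta$, giving injectivity of $H(\ker\delta,d)\to H(V,d)$, while if $d\alpha=0$ then $\delta\alpha\in\ker d\cap\im\delta=\im(d\delta)$, so $\delta\alpha=d\delta\gamma$ and $\alpha-d\gamma$ is a $\delta$-closed representative, giving surjectivity. For $(3)\Rightarrow(1)$: the surjectivity just shown yields $\mathcal H\twoheadrightarrow H(V,d)$ with kernel $\mathcal H\cap\im d=\im d\cap\ker\delta=\im(d\delta)$, an $\mathfrak{sl}_2$-submodule of $\mathcal H$; hence $H(V,d)\cong\mathcal H/\im(d\delta)$ as modules over $e$ and $h$, and the right-hand side, being a quotient of a sum of full irreducibles by an $\mathfrak{sl}_2$-submodule, is again a sum of full irreducibles, for which $e^i$ is an isomorphism from weight $-i$ to weight $i$ --- that is $(1)$. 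For $(2)\Rightarrow(1)$: although $e$ does not preserve the subcomplex $(\ker\delta,d)$, I would check that $e$, $f$ and $h$ each preserve the cocycles $\ker\delta\cap\ker d$ and the coboundaries $d(\ker\delta)$, the latter using precisely the injectivity half of $(2)$, which forces $e\bigl(d(\ker\delta)\bigr)\subseteq(\ker\delta\cap\ker d)\cap\im d=d(\ker\delta)$; then $H(\ker\delta,d)$ is an $\mathfrak{sl}_2$-module, hence (weights bounded, $h$ diagonalizable) a sum of full irreducibles, and the quasi-isomorphism, being $e$- and $h$-equivariant on cohomology, transports its Lefschetz property to $H(V,d)$.

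This leaves the one genuinely substantial step, $(1)\Rightarrow(3)$, which is the algebraic fusion of Yan's theorem (strong Lefschetz $\Rightarrow$ every $d$-class has a $\delta$-closed representative) and Merkulov's upgrade of it to the full $d\delta$-lemma. I would argue by induction on Lefschetz position: writing a cocycle (resp.\ a prospective bounding element) in its Lefschetz components over $\Prim(V)$, use surjectivity of $e^i\colon H^{-i}(V,d)\to H^i(V,d)$ to correct it by $d$-exact terms so as to kill its non-harmonic top component, then descend; the correction equations are solvable exactly because the structure coefficients $s$ and $k{+}1{-}s$ in the formula for $\delta$ are nonzero on the ranges of positions that can actually occur, and feeding the result into the identification $\mathcal H=\bigoplus_s e^s\mathcal H_{\Prim}$ produces, for $\alpha\in\ker d\cap\im\delta$, a $\gamma$ with $\alpha=d\delta\gamma$; the symmetric inclusion $\im d\cap\ker\delta\subseteq\im(d\delta)$ then follows by conjugating the two complexes by $f$ via $\delta=[f,d]$. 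Finally $(3)\Leftrightarrow(4)$ is the translation across the double-complex dictionary of the first paragraph: under the re-indexing the $d\delta$-lemma for $(V,d,\delta)$ becomes the $\partial\bar\partial$-lemma for the associated double complex, and reading that condition off the single column $\Prim(V)$ --- on which $\delta$ restricts, up to the nonzero scalars above, to $\deebar$ --- is precisely the $\dee\deebar$-lemma; concretely a $d\delta$-exact element unwinds component by component into a $\dee\deebar$-exact primitive and conversely, with the terms at the top of each irreducible (where $s$ or $k{+}1{-}s$ degenerates) handled using $\dee^2=\deebar^2=0$ and the harmonic identification.

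\textbf{Main obstacle.} The hard part will be $(1)\Rightarrow(3)$: wringing the full $d\delta$-lemma, uniformly in the weight, out of nothing but the Lefschetz isomorphisms on cohomology. The delicate bookkeeping throughout is the control of ``edge effects'' --- the vanishing of the structure coefficients $s$, $k{+}1{-}s$ at the extreme Lefschetz positions, equivalently the failure of $e^s$ to be injective at the top of an irreducible --- which in the classical compact-manifold treatment is invisibly absorbed by Poincar\'e duality and the symplectic star operator, but which here must be confronted head on; getting $d\delta$ (not merely Brylinski's surjectivity) without any compactness hypothesis is exactly the point where the argument needs care. A secondary, purely formal, matter is verifying that the various standard formulations of the $d\delta$- and $\dee\deebar$-lemmas agree in this generality (no compactness, weight spaces possibly infinite-dimensional), which I would do by the usual diagram chases since all weights lie in $[-n,n]$.
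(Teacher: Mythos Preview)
Your overall architecture matches the paper's: establish the cycle among (1)--(3) and then $(3)\Leftrightarrow(4)$ via the primitive decomposition. Several of your individual steps are cleaner than, or genuinely different from, the paper's. Your $(3)\Rightarrow(1)$ via the identification $H^*(V,d)\cong\mathcal H/\im(d\delta)$ as an $\mathfrak{sl}_2$-module is slicker than the paper's direct computation in Proposition~\ref{ddeltaimpSLP}. Your $(2)\Rightarrow(1)$, by showing $H^*(\ker\delta,d)$ carries an $\mathfrak{sl}_2$-action (using the injectivity half of (2) to force $e\cdot d(\ker\delta)\subseteq d(\ker\delta)$), is not in the paper at all; the paper instead routes $(2)\Leftrightarrow(3)$ through a citation to Manin. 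Your double-complex framing of $(3)\Leftrightarrow(4)$ repackages the content of Lemmas~\ref{deedeebarimpliesddelta} and~\ref{ddeltaimpliesdeedeebar} more conceptually.

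The one genuine gap is in $(1)\Rightarrow(3)$. Your route to $\ker d\cap\im\delta\subseteq\im(d\delta)$ --- Yan's harmonic-representative induction feeding into the primitive description of $\mathcal H$ --- is sound and tracks Propositions~\ref{KIisII} and~\ref{SLPisddelta}. But for the symmetric inclusion $\im d\cap\ker\delta\subseteq\im(d\delta)$ you propose ``conjugating the two complexes by $f$ via $\delta=[f,d]$,'' and this does not work: $f$ is nilpotent on each irreducible, not invertible, so there is no conjugation to perform, and the bracket relation by itself does not interchange the roles of $d$ and $\delta$. The paper handles exactly this step in Proposition~\ref{KIisIK} using the $\star$ operator, which is available in $\mathscr{C}_n$ by definition and is precisely the weight-reversal involution intertwining $(d,\delta)$ with $(\delta,d)$ up to sign --- that is the symmetry you actually need. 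Alternatively, since you already have the double-complex picture and the $\dee\leftrightarrow\deebar$ symmetry is manifest there, you could instead prove $(1)\Rightarrow(4)$ directly and then invoke your $(4)\Rightarrow(3)$; either fix closes the gap.
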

After introducing the notion of the $d \delta$-lemma holding weakly in section \ref{Weak}, we also weaken this to the following theorem:
\begin{theorem*}
    Let $V \in \mathrm{Ob}(\mathscr{C}_n)$. Then the following are equivalent:
    \begin{enumerate}
        \item The $d\delta$-lemma holds for $V$ up to degree $s$.
        \item The map $[e^i]:H^{-i}(V,d) \ra H^i(V,d)$ is an isomorphism for all $|i|\geq s$.
        \item The inclusion $(\ker(\delta),d) \hookrightarrow (V,d)$ is a quasi-isomorphism in degree $|i| \geq s$.
    \end{enumerate}
\end{theorem*}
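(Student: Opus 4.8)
The plan is to run the same cycle of implications that proves the (strong) Theorem, but with the $h$-weight carried as a bookkeeping parameter through every construction, so that one sees precisely which steps require the Lefschetz or quasi-isomorphism data only in weights $|i|\ge s$. The structural inputs are the same as before: the weight decomposition $V=\bigoplus_{j=-n}^{n}V_j$ into $h$-eigenspaces; the fact that $e^i\colon V_{-i}\to V_i$ is already an isomorphism at the level of $V$ (finite-dimensional $\mathfrak{sl}_2$-theory applied string by string); the Lefschetz decomposition $V=\bigoplus_{k\ge 0}e^k\Prim(V)$; and the degree-homogeneous commutation relations $e^i d=de^i$, $\delta d=-d\delta$, $[e,\delta]=d$, and $\delta=[f,d]$, which are all the classical arguments of Mathieu--Yan and Guillemin actually use. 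Because $d,\delta$ shift weight by $\pm1$ while $e,f$ shift by $\pm2$, every elementary chase that begins in $V_j$ can be followed into $V_{j\pm1}$ and $V_{j\pm2}$ explicitly, and this tracking is the only new ingredient relative to the strong Theorem.

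For $(1)\Rightarrow(3)$ I would take $d$-closed $\alpha\in V_j$ with $|j|\ge s$: since $\delta\alpha$ is $d$-closed ($\delta d=-d\delta$), $\delta$-closed ($\delta^2=0$), and $\delta$-exact, the weak $d\delta$-lemma of section~\ref{Weak} yields $\mu$ with $d\delta\mu=\delta\alpha$, whence $\alpha+d\mu\in\ker\delta$ represents $[\alpha]$; the injectivity half of the quasi-isomorphism in this range is the mirror chase applied to $\alpha=d\beta$ with $\beta$ first made $\delta$-closed. For $(3)\Rightarrow(2)$ I would feed the quasi-isomorphism in weights $|i|\ge s$ into the primitive decomposition of $d$-cohomology: a $d$-closed $\delta$-closed representative has all primitive components above its bottom one killed, which promotes the tautological surjection $e^i\colon V_{-i}\twoheadrightarrow V_i$ to a surjection on cohomology and then, via the $\mathfrak{sl}_2$-string structure that this very range of surjectivity exhibits on $H(V,d)$, to an isomorphism $[e^i]\colon H^{-i}\to H^{i}$ for $|i|\ge s$. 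Finally $(2)\Rightarrow(1)$ is the localization of Guillemin's argument: strong Lefschetz in weights $|i|\ge s$ solves exactly the $e^i$-equations on cohomology that the weak $d\delta$-lemma needs solved, the remaining equations living in the band $|i|<s$ on which section~\ref{Weak} asserts nothing.

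The step I expect to be the main obstacle is the boundary: a chase that starts in $V_{\pm s}$ repeatedly wants to call on surjectivity of $[e^{s-1}]$ or on the $d\delta$-lemma in weight $s-1$, and the even shifts of $e$ and $f$ do not fully rescue this — the primitive vectors of weight $s-1$ are genuinely out of reach of $f$ applied to weight $s+1$. Closing the gap therefore hinges on the precise formulation of ``$d\delta$-lemma up to degree $s$'' in section~\ref{Weak} being calibrated so that each such appeal is either vacuous (the offending term is forced to be primitive of the right weight and is handled separately) or absorbable using only $\delta=[f,d]$ without passing through weight $s-1$; dually, I must verify that surjectivity of $[e^i]$ for $|i|\ge s$ already forces injectivity there, i.e.\ that no $\mathfrak{sl}_2$-string in $H(V,d)$ begins strictly inside $(-s,s)$ and survives to weight $\ge s$ without breaking surjectivity. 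Once the bookkeeping at $|j|\in\{s-1,s,s+1\}$ is pinned down, the remainder is the weight-graded shadow of the proof of the strong Theorem.
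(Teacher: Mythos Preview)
Your overall strategy --- localize the proof of the strong Theorem by carrying the $h$-weight through each step --- is exactly what the paper does, and your instinct that the boundary at $|j|\in\{s-1,s,s+1\}$ is the only subtle point is correct. Indeed, the paper's definition of ``$d\delta$-lemma up to degree $s$'' includes an extra half-condition $\im(d)\cap\ker(\delta)=\im(d\delta)$ on $V_{s+1}$ precisely to absorb the one boundary appeal that arises in the induction of Proposition~\ref{SLPisddelta}; this is the calibration you anticipated.

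Where your plan diverges from the paper, and where there is a genuine gap, is the implication $(3)\Rightarrow(2)$. You propose to pass from surjectivity of $[e^i]$ (which the harmonic representatives give you) to injectivity ``via the $\mathfrak{sl}_2$-string structure that this very range of surjectivity exhibits on $H(V,d)$.'' But $H(V,d)$ does \emph{not} carry an $\mathfrak{sl}_2$-module structure in general: the action of $f$ fails to descend unless the $d\delta$-lemma already holds (this is exactly Corollary~\ref{lefschetzgivessl2coho}, which is a \emph{consequence} of Lefschetz, not an input). So there are no strings in $H(V,d)$ to argue with, and surjectivity of $[e^i]$ for $|i|\ge s$ need not force injectivity there absent finite-dimensionality or Poincar\'e duality, neither of which is available in $\mathscr{C}_n$. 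The paper sidesteps this entirely: rather than running the cycle $(1)\Rightarrow(3)\Rightarrow(2)\Rightarrow(1)$, it proves $(1)\Leftrightarrow(3)$ degree-by-degree via Manin's argument, and then does $(1)\Rightarrow(2)$ directly using Proposition~\ref{ddeltaimpSLP}, whose injectivity half manufactures an explicit primitive from the $d\delta$-lemma in weight $k$ (i.e.\ $|k|\ge s$, available by $\star$-duality) without ever invoking an $\mathfrak{sl}_2$-action on cohomology. If you reroute $(3)\Rightarrow(1)\Rightarrow(2)$ this way, the rest of your bookkeeping goes through.
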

Applying the first of these two theorems to a symplectic Lie algebroid with representation we have the following theorem:
\begin{theorem*}
    Let $(\A \ra M, \rho, \omega)$ be a symplectic Lie algebroid of rank $2m$, with representation $(E,\nabla)$. Then the following are equivalent:
    \begin{enumerate}
        \item The map $[L]^k:H^{m-k}_\A(M,E) \ra H^{m+k}_\A(M,E)$ given by $[\omega]^k \wedge -$ is an isomorphism for all $k$.
        \item The inclusion of complexes $(\ker({}^\A\nabla^*), {}^\A \nabla) \hookrightarrow (\Omega_\A^\bullet(M,E),{}^\A \nabla)$ is a quasi-isomorphism.
        \item The $(\A,E)$-$d\delta$-lemma holds for $\A$.
        \item The $(\A,E)$-$\dee \deebar$-lemma holds for $\A$.
    \end{enumerate}
\end{theorem*}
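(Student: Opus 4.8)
The plan is to realize $V:=\Omega_\A^\bullet(M,E)$ as an object of $\mathscr{C}_m$ (hence of $\mathscr{C}_n$ for every $n\ge m$), to match the four conditions of the statement with the four conditions of the first abstract theorem, and then simply to invoke that theorem. This is the twisted, Lie-algebroid avatar of the classical fact that the de Rham complex of a symplectic manifold is a $\mathfrak{g}$-module, so the real work is to check that nothing breaks when $d$ is replaced by the covariant differential ${}^\A\nabla$ of a flat representation and coefficients in $E$ are carried along.

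First I would construct the $\mathfrak{sl}_2$-action: let $e$ act by the Lefschetz operator $L=\omega\wedge-$, let $f$ act by $\Lambda=\iota_\pi$, contraction with the Poisson bivector $\pi=\omega^{-1}\in\Gamma(\Lambda^2\A)$, and let $h$ act on $\Omega_\A^k(M,E)$ as multiplication by $k-m$. Pointwise, $\Omega_\A^\bullet(M,E)_x\cong\Lambda^\bullet(\A_x^*)\otimes E_x$ with $\omega_x$ an ordinary linear symplectic form, so the linear-algebra identities $[e,f]=h$, $[e,h]=-2e$, $[f,h]=2f$ hold at each point and therefore on global sections. Since $\A$ has fiber dimension $2m$ we have $\Omega_\A^k(M,E)=0$ for $k\notin\{0,\dots,2m\}$, so $h$ acts diagonalizably with finitely many integer eigenvalues, all in $\{-m,\dots,m\}$; this is exactly the finite-$h$-type condition.

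Next I would turn on the differential: let $d$ act by ${}^\A\nabla$, which satisfies $[d,d]=0$ because $\nabla$ is flat, and put $\delta:=[f,d]$, i.e.\ ${}^\A\nabla^*:=[\Lambda,{}^\A\nabla]$. The relations $[h,d]=d$, $[h,\delta]=-\delta$, $[f,d]=\delta$ are immediate, and $[e,d]=0$ is the statement $L\,{}^\A\nabla={}^\A\nabla\,L$, which follows from $d_\A\omega=0$ together with the twisted Leibniz rule ${}^\A\nabla(\beta\wedge\alpha)=d_\A\beta\wedge\alpha+\beta\wedge{}^\A\nabla\alpha$ for $\beta$ a scalar $\A$-form; it is essential here that $\omega$ is closed in the Lie-algebroid complex, not merely nondegenerate. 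Given these, the remaining relations $[e,\delta]=d$, $[f,\delta]=0$, $[d,\delta]=0$, $[\delta,\delta]=0$ are forced by the graded Jacobi identity and the $\mathfrak{sl}_2$-relations exactly as in the untwisted case (for instance $[e,\delta]=[e,[f,d]]=[[e,f],d]+[f,[e,d]]=[h,d]=d$). Hence $V$ is a $\mathfrak{g}$-module of finite $h$-type, i.e.\ $V\in\mathrm{Ob}(\mathscr{C}_m)$.

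Finally I would match the conditions. With $h$ acting by $k-m$ on $\Omega_\A^k$, the weight-$(\pm i)$ subspace is the space of $\A$-forms of degree $m\pm i$, so $H^{\pm i}(V,d)=H_\A^{m\pm i}(M,E)$ and $[e^i]=[\omega]^i\wedge-$; thus item (1) of the abstract theorem is item (1) here with $k=i$ (both sides vacuous when $|i|>m$). Item (2) of the abstract theorem is literally item (2) here, the subcomplex structure on $\ker({}^\A\nabla^*)$ coming from $[d,\delta]=0$. Items (3) and (4) are, by definition, the $(\A,E)$-$d\delta$-lemma and $(\A,E)$-$\dee\deebar$-lemma, since $\dee,\deebar$ acting on $V$ are the Lie-algebroid versions of the operators $\partial_+,\partial_-$ of Tseng and Yau. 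Applying the first abstract theorem to $V$ then yields all four equivalences; in particular compactness of $M$ is never used, the abstract theorem having absorbed the role played classically by Poincar\'e duality. I expect the only genuinely delicate point to be the verification of $[e,d]=0$ and of the fiberwise $\mathfrak{sl}_2$-identities in the presence of the bundle $E$ and with ${}^\A\nabla$ in place of $d_\A$: once this is in place everything downstream is formal, so this preliminary step, routine though it is, carries essentially all of the geometric content, the remaining weight of the result residing in the abstract theorem.
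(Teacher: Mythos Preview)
Your proposal is correct and follows essentially the same approach as the paper: realize $\Omega_\A^\bullet(M,E)$ as an object of $\mathscr{C}_m$ by verifying the $\mathfrak{g}$-relations for $L_\nabla,\Lambda_\nabla,H_\nabla,{}^\A\nabla,{}^\A\nabla^*$, then invoke the abstract equivalence theorem. If anything, you give more detail than the paper does, which simply asserts that the relations ``follow similarly'' to the untwisted case and then states the corollary as an immediate consequence of Theorems~\ref{WeakTheorem} and~\ref{QuadEquiv}.
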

Choosing $E$ to be the trivial line bundle on $M$, $\A = TM$, and with the standard deRham differential $d$ as the connection we recover the main results of \cite{YAN1996143}, \cite{Mathieu}, and \cite{HoSym}. The paper is outlined as follows: In section $2$ we introduce the relevant Lie superalgebra and associated module category and go on to establish the four results equivalence as a purely algebraic fact, as well as reestablish the notion of the ``weak'' Lefschetz property as seen in \cite{fernández2005ddeltalemmaweaklylefschetzsymplectic}. In section $3$ we introduce symplectic Lie algebroids, and define operators on the space of $\A$-forms analogous to those on a symplectic Manifold. Section $4$ contains examples of the theorems proven. First, the classical example $KT^4$ - The Kodaira-Thurston manifold, fits well into the framework as it should. Next, using Ovando's \cite{ovando2004dimensional} classification of symplectomorphism classes of $4$-dimensional Lie Algebras, we discuss which admit the structure of a Lefschetz module. Moving on we consider six-dimensional Nilmanifolds, and discuss which of the 26 isomorphism classes admitting symplectic forms admit Lefschetz maps that are isomorphisms. We then discuss $E$-manifolds as studied by Miranda and Scott \cite{Miranda_2020}, and finally K\"ahler Lie algebroids.
\begin{Acknow}
   We would like to thank the referee for a number of insightful comments and suggestions that led to substantial simplifications and a clearer presentation of the main results.
\end{Acknow}
\section{Lefschetz Modules}
\subsection{The Lie Superalgebra}
 If $V \in \mathrm{Ob}(\mathscr{C}_{n})$, then $V$ admits a decomposition
    \begin{equation*}
        V = \bigoplus_{i=-n}^n V_i,
    \end{equation*}
 where $h$ acts on $V_i$ as scaling by $i$, i.e. $i\cdot \id_{V_i}$.
 First, we list a number of identities derived from the bracket relations that will be used later. 
\begin{prop} For any $V \in \mathrm{Ob}(\mathscr{C}_n)$, we have the relations:
\begin{align*}
    [f^k,d]=kf^{k-1}\delta, \quad [e^k,h]&=-2ke^k, \quad [e^k,f]=ke^{k-1}(h+k-1), \\
    [e^k,\delta]=ke^{k-1}d, \quad & \quad  [h,e^k] = 2ke^k.
\end{align*}
\end{prop}
\begin{proof}
    All of these follow by induction, with the base cases coming from the defining relations of $\mathfrak{g}$.
\end{proof}

\begin{prop}\label{ddeltaebracket0}
    $[d\delta,e]=0$.
\end{prop}
\begin{proof}
    We have that 
    \begin{equation*}
        [d\delta,e] = d(e\delta-d)-ed\delta = de\delta - d^2 -d e \delta =0,
    \end{equation*}
    where we use $[d,e]=[d,d]=0$.
\end{proof}

\begin{lemma}\label{Cyclicfindim}
    Every cyclic $V \in \mathrm{Ob}(\mathscr{C}_n)$ is finite-dimensional.
\end{lemma}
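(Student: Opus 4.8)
The plan is to combine the Poincar\'e--Birkhoff--Witt theorem for the Lie superalgebra $\mathfrak{g}$ with the boundedness of the $h$-spectrum. First I would reduce to the case where the cyclic generator $v$ is an $h$-eigenvector. Writing $v = \sum_{i=-n}^n v_i$ according to the finite eigenspace decomposition $V = \bigoplus_{i=-n}^n V_i$ of the preceding proposition, each component $v_i$ is obtained from $v$ by applying the Lagrange interpolation polynomial in $h$ that takes the value $1$ at $i$ and $0$ at the other eigenvalues; hence $v_i \in V$, and $V = U(\mathfrak{g})v = \sum_i U(\mathfrak{g}) v_i$ is a finite sum of submodules each cyclic on a weight vector, so it suffices to treat that case. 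Thus assume $h v = \lambda v$ with $\lambda \in \Z$, $|\lambda|\le n$.

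Next I would invoke the super-PBW theorem. Since $d$ and $\delta$ are odd and the relations $[d,d]=0$, $[\delta,\delta]=0$ force $d^2=\delta^2=0$ in $U(\mathfrak{g})$ (the ground field having characteristic $\ne 2$), a PBW basis of $U(\mathfrak{g})$ with respect to the ordered basis $f<h<e<\delta<d$ consists of the monomials $f^a h^b e^c \delta^{\varepsilon_1} d^{\varepsilon_2}$ with $a,b,c\ge 0$ and $\varepsilon_1,\varepsilon_2\in\{0,1\}$. Organized slightly differently, $\langle d,\delta\rangle$ is an abelian odd ideal and $\mathfrak{sl}_2=\langle e,f,h\rangle$ is a complementary subalgebra, so $U(\mathfrak{g}) = U(\mathfrak{sl}_2)\otimes \Lambda(d,\delta)$ as vector spaces; consequently $V = U(\mathfrak{g})v$ is the sum of the four $\mathfrak{sl}_2$-submodules generated by the weight vectors $v,\ dv,\ \delta v,\ \delta d v$, whose $h$-weights are $\lambda,\ \lambda+1,\ \lambda-1,\ \lambda$ respectively (each lying in $[-n,n]$ unless the vector is $0$).

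It then remains to show that a cyclic $\mathfrak{sl}_2$-module generated by an $h$-weight vector $w$ of weight $\mu$, all of whose weights lie in $[-n,n]$, is finite dimensional. Using the PBW basis $f^a h^b e^c$ of $U(\mathfrak{sl}_2)$, the module is spanned by the vectors $f^a h^b e^c w$. Here $e^c w$ lies in the weight-$(\mu+2c)$ space, which vanishes once $\mu+2c>n$, so only finitely many $e^c w$ are nonzero; $h^b$ acts on the weight vector $e^c w$ as the scalar $(\mu+2c)^b$; and $f^a e^c w$ lies in the weight-$(\mu+2c-2a)$ space, which vanishes once $\mu+2c-2a<-n$, so for each surviving $c$ only finitely many $a$ contribute. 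Hence the spanning set collapses to finitely many vectors, and the module is finite dimensional. Applying this to each of the four generators above and summing shows $V$ is finite dimensional.

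The main obstacle is the bookkeeping in the super-PBW step: making precise that the odd generators contribute only the four monomials $1, d, \delta, \delta d$, that the chosen ordering behaves correctly, and that all the ``out-of-range weight'' vectors genuinely vanish --- which is exactly the point at which the hypothesis $V\in\mathrm{Ob}(\mathscr{C}_n)$ (a finite, bounded, integral, diagonalizable $h$-spectrum) enters. Everything after that is routine $\mathfrak{sl}_2$ weight combinatorics.
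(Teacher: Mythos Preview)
Your argument is correct and follows the same strategy as the paper: reduce to a weight-vector generator, invoke PBW, and use the bounded $h$-spectrum to cut the spanning set of monomials down to finitely many. The paper's written proof in fact works only over $U(\mathfrak{sl}_2)$ --- its PBW monomials are $e^r f^m h^k$ with no mention of $d$ or $\delta$ --- so your super-PBW step, splitting $U(\mathfrak{g})v$ into the four $\mathfrak{sl}_2$-cyclic pieces generated by $v,\,dv,\,\delta v,\,\delta d v$, is actually a bit more careful than what the paper records.
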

\begin{proof}
Let $v$ generate $V$ as a $U\mathfrak{g}$ module, and decompose $v = \sum v_i$ for $v_i \in V_i$. It suffices to show that the module generated by such a $v_i$ is finite-dimensional. By the PBW theorem for Lie Superalgebras, see \cite[Theorem 6.1.1]{MR2906817}, it suffices to show that there are finitely many nonzero expressions of the form
\begin{equation*}
    e^\ell f^k h^r d^s \delta^t v_i.
\end{equation*}
First, we have that $0 \leq s,t <2$ by the bracket relations. Next, note that $h(d^s\delta^tv_i)=(s-t+i)d^s\delta^tv_i$, and since $h$ has finitely many nonzero eigenvalues, there are finitely many nonzero such expressions. Moving on, we have that
\begin{equation*}
    h(f^k d^s \delta^t v_i) = (s-t-2k+i) f^k d^s \delta^t v_i,
\end{equation*}
and since $0 \leq k \leq n$ and $h$ has finitely many non-zero eigenvalues, again there are only finitely many non-zero  expressions of the above form. Finally, we have that
\begin{equation*}
    h(e^\ell f^k d^s \delta^t v_i) = (2\ell -2k+s-t+i)e^\ell f^k d^s \delta^t v_i,
\end{equation*}
of which there are finitely many expressions that are non-zero by the same argument as before along with the fact that $0 \leq k \leq n$.
\end{proof} 
  
 \begin{definition}
     Suppose $V \in \mathrm{Ob}(\mathscr{C}_n)$ with decomposition $V = \bigoplus_i V_i$. We call a vector $v_i \in V_i$ primitive if $fv_i=0$ or equivalently $e^{i+1}v_i=0$.
 \end{definition}
 We now recall a few results from the structure theory of $\mathfrak{sl}_2$-modules:
 \begin{theorem}
     Let $V \in \mathrm{Ob}(\mathscr{C}_n)$. Then every $v \in V$ can be written uniquely as a finite sum
         \begin{equation*}
             v = \sum e^i v_i,
         \end{equation*}
         where the $v_i$ are primitive.
 \end{theorem}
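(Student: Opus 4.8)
The plan is to reduce the statement to the cyclic case handled by Lemma \ref{Cyclicfindim}, and then to the classical $\mathfrak{sl}_2(\R)$ structure theory for finite-dimensional modules. First I would observe that it suffices to prove the claim for $v$ a homogeneous element of some weight space $V_j$, since $V = \bigoplus_i V_i$ and an arbitrary $v$ is a finite sum of such homogeneous pieces (here using that there are only finitely many nonzero $V_i$). So fix $v \in V_j$ and pass to the cyclic submodule $W = U(\mathfrak{sl}_2(\R)) \cdot v$; by Lemma \ref{Cyclicfindim} this $W$ is finite dimensional, and it plainly still lies in $\mathscr{C}_n$ (the $h$-eigenvalues of $W$ form a subset of those of $V$, still integral, diagonalizable, and bounded by $n$).

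Next I would invoke the standard structure theory of finite-dimensional $\mathfrak{sl}_2(\R)$-modules: $W$ decomposes as a direct sum of irreducibles, each irreducible has a highest-weight vector, and in fact the whole module is generated over $f$ by its space of highest-weight vectors, which (by the relation $[e,f] = h$ and standard weight arguments) coincides with the primitive vectors in the sense defined above — those killed by $f$, equivalently killed by $e^{i+1}$ in weight $i$. Concretely, decompose $W = \bigoplus_\alpha W^{(\alpha)}$ into irreducibles; in each $W^{(\alpha)} \cong$ the irreducible of highest weight $d_\alpha$, the weight-$j$ part is spanned by $f^{(d_\alpha - j)/2}$ applied to a primitive vector of weight $d_\alpha$. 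Rewriting $f^k$ acting on a weight-$d_\alpha$ primitive as a scalar multiple of $e^{(d_\alpha - j)/2}$ applied to a weight-$(-d_\alpha)$ primitive vector — using that in a finite-dimensional irreducible the lowest weight vector is primitive in our downward sense too, or more directly that $e$ and $f$ interchange the roles of highest and lowest weight up to nonzero scalars — expresses the weight-$j$ component of $v$ as $\sum_i e^i v_i$ with each $v_i$ primitive. Summing the contributions from each irreducible summand and then over the finitely many weights $j$ gives the finite expression claimed.

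One mild subtlety to be careful about: the definition of primitive given here is $f v_i = 0$ (with the parenthetical equivalence $e^{i+1} v_i = 0$ for $v_i \in V_i$), so I should make sure the vectors $v_i$ I extract are genuinely $f$-annihilated and not merely "lowest weight" in some loose sense; in a finite-dimensional module these agree, but I would spell out the equivalence via the $\mathfrak{sl}_2(\R)$ commutation relations and the fact that $f$ strictly lowers weight while weights are bounded below on $W$. The main obstacle — really the only place any work happens — is justifying that we may pass to a finite-dimensional submodule and then quoting/adapting the finite-dimensional $\mathfrak{sl}_2(\R)$ decomposition; once $W$ is known finite dimensional (Lemma \ref{Cyclicfindim}) and semisimple, everything else is bookkeeping with weights and the relation $e^i: V_{-i} \to V_i$. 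I would present the argument in that order: (1) reduce to homogeneous $v$, (2) reduce to the finite-dimensional cyclic submodule, (3) decompose into irreducibles and read off the primitive expansion, (4) reassemble.
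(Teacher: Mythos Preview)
Your proposal is correct and follows essentially the same route as the paper: reduce to the cyclic $\mathfrak{sl}_2(\R)$-submodule generated by $v$, invoke Lemma \ref{Cyclicfindim} to get finite dimensionality, decompose into irreducibles, and then read off the primitive (Lefschetz) decomposition inside each irreducible summand. The only cosmetic difference is that you first pass through highest-weight vectors and $f$-powers before converting to lowest-weight (primitive) vectors and $e$-powers, whereas the paper works directly with the $e^k$-orbit of the lowest-weight vector; your detour is harmless, but you should tidy the terminology so that ``primitive'' consistently means $f$-annihilated (lowest weight) throughout, since the brief identification of primitives with highest-weight vectors in your second paragraph is a slip.
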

 \begin{remark}
     This only holds since the number of nonzero eigenvalues is bounded in both directions for any object of $\mathscr{C}_m$. Without this we have no expectation for such a result.
 \end{remark}
 \begin{prop}
     Suppose $V \in \mathrm{Ob}(\mathscr{C}_n)$ with decomposition $V = \bigoplus_i V_i$. Then the map $e^i:V_{-i} \ra V_i$ is a bijection.
 \end{prop}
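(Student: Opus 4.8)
The plan is to reduce the statement to the classical structure theory of \emph{finite}\/-dimensional $\mathfrak{sl}_2(\R)$-modules; this is legitimate because, although $V$ itself may be infinite dimensional, every single vector lies in a finite-dimensional $\mathfrak{sl}_2(\R)$-submodule. We may assume $i\geq 0$ (for $i=0$ the map is the identity, and an index $i<0$ records the same content for $-i$). Given $v\in V_{-i}$, let $W$ be the $U(\mathfrak{sl}_2(\R))$-submodule of $V$ generated by $v$. Running the argument of Lemma~\ref{Cyclicfindim} verbatim — it only uses the $\mathfrak{sl}_2(\R)$-action, integrality, diagonalizability of $h$, and the bound $-n\le \text{eigenvalue}\le n$, all of which $W$ inherits from $V$ — shows $W$ is finite dimensional, and it carries the induced weight decomposition $W=\bigoplus_j W_j$ with $W_j\subseteq V_j$.

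Next I would apply Weyl complete reducibility to write $W=\bigoplus_\alpha W^{(\alpha)}$ with each $W^{(\alpha)}\cong V(m_\alpha)$ irreducible of highest weight $m_\alpha$, so $W^{(\alpha)}$ has one-dimensional weight spaces in degrees $-m_\alpha,-m_\alpha+2,\dots,m_\alpha$ and is zero in all other degrees. On such a summand the classical computation with a standard basis gives that $e\colon W^{(\alpha)}_k\to W^{(\alpha)}_{k+2}$ is an isomorphism whenever $-m_\alpha\le k\le m_\alpha-2$ (with $k\equiv m_\alpha \bmod 2$); composing $i$ such maps, $e^i\colon W^{(\alpha)}_{-i}\to W^{(\alpha)}_i$ is an isomorphism when $i\le m_\alpha$ (right parity), and otherwise both weight spaces vanish, so $e^i$ is the map $0\to 0$. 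In every case $e^i\colon W^{(\alpha)}_{-i}\to W^{(\alpha)}_i$ is bijective, hence so is $e^i\colon W_{-i}=\bigoplus_\alpha W^{(\alpha)}_{-i}\to\bigoplus_\alpha W^{(\alpha)}_i=W_i$.

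From this the global statement follows at once. For injectivity: if $e^iv=0$ with $v\in V_{-i}$, then $v$ lies in the finite-dimensional submodule $W=\langle v\rangle$, where $e^i$ is injective on $W_{-i}\ni v$, so $v=0$. For surjectivity: given $w\in V_i$, apply the same to $W=\langle w\rangle$; since $e^i\colon W_{-i}\to W_i$ is onto and $w\in W_i$, there is $v\in W_{-i}\subseteq V_{-i}$ with $e^iv=w$. (As an alternative for surjectivity one can avoid Weyl's theorem and use the primitive decomposition already established: writing $w=\sum_j e^jw_j$ with each $w_j$ primitive of weight $-k_j\le 0$, a weight count forces $k_j\ge i$ and $j=(k_j+i)/2\ge i$ on every nonzero term, whence $v:=\sum_j e^{\,j-i}w_j\in V_{-i}$ satisfies $e^iv=w$.)

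The one genuine obstacle — the reason this needs an argument at all rather than a citation — is precisely that $\mathscr{C}_n$ allows infinite-dimensional modules, so Hard Lefschetz for $\mathfrak{sl}_2(\R)$ is not available off the shelf; finiteness of the $h$-spectrum together with cyclicity (Lemma~\ref{Cyclicfindim}) is exactly what restores finite dimensionality one vector at a time. The only thing to be careful about after that is the bookkeeping of weights and parities, so that the degenerate summands (those with $m_\alpha<i$ or with the wrong parity) are handled uniformly; but in those cases the relevant weight spaces are simply zero.
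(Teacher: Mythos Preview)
Your argument is correct and is precisely the content the paper is invoking: the paper's own proof is the single line ``This follows from classical $\mathfrak{sl}_2(\R)$ representation theory,'' and your reduction via Lemma~\ref{Cyclicfindim} to finite-dimensional cyclic $\mathfrak{sl}_2$-submodules, followed by Weyl reducibility and the standard weight-space computation on irreducibles, is exactly how one makes that citation honest for possibly infinite-dimensional $V\in\mathscr{C}_n$. If anything, you have supplied the detail the paper omits; the alternative surjectivity argument via the primitive decomposition is also fine and mirrors the theorem immediately preceding this proposition in the paper.
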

 \begin{prop}
    For primitive $v \in V_{-i}$, we have that
    \begin{equation*}
        f^ke^k v= \lambda_{k,i} v,
    \end{equation*}
    where
    \begin{equation*}
        \lambda_{k,i} = \frac{k! i!}{(i-k)!}.
    \end{equation*}
\end{prop}
\begin{proof}
    We have that
    \begin{equation*}
        fe^kv =  e^kf v - ke^{k-1}(h-k+1)v = k(i+k-1)e^{k-1}v,
    \end{equation*}
    since $fv=0$ as $v$ is primitive. Applying $f$ to this expression again, we see that
    \begin{equation*}
        f^2e^kv = k(k-1)(i-k+1)(i-k+2) e^{k-2}v.
    \end{equation*}
    Continuing, we have that
    \begin{equation*}
        f^ke^k v = k(k-1)...(k-(k-1)) (i-k+1)(i-k+2) \dots (i-k+k) v = \frac{k! i!}{(i-k)!}v
    \end{equation*}
    which is well defined, as if $k> i$, then $e^kv=0$, so $k\leq i$.
\end{proof}
\begin{prop}\label{e^kf^ke^rv_i}
    Let $k,r,i\geq 0$ and $v \in V_{-i}$ be primitive. Then
    \begin{align*}
        f^ke^k(e^rv) = \frac{(r+k)!(i-r)!}{r!(i-r-k)!} e^rv.
    \end{align*}
\end{prop}
\begin{proof}
    This follows by induction on $r$, where the base case is given by the preceding proposition.
\end{proof}
We'd like to introduce the notion of the symplectic Hodge star, though the typical method of construction is unavailable. Instead, note that since we have unique primitive decompositions, it suffices to define a star on elements of the form $e^pv_{-r}$ for $0 \leq p \leq r$ and $v_{-r}\in V_{-r}\cap \ker(f)$.
\begin{definition}
    Fix $e^pv_{-r}$ for $v_{-r} \in V_{-r}\cap \ker(f)$ and $0 \leq p \leq r$. Define the ``star operator'' $\star$ via:
    \begin{equation*}
        \star (e^pv_{-r}) := \begin{cases}
            (-1)^{\frac{r(r-1)}{2}} \frac{p!}{(r-p)!}e^{r-2p} (e^pv_{-r}) &\text{if }2p\leq r \\
            (-1)^{\frac{r(r-1)}{2}}\frac{(r-p)!}{p!} f^{2p-r}(e^pv_{-r}) &\text{if }2p>r.
        \end{cases}
    \end{equation*}
\end{definition}
\begin{prop}
    $\star^2=\id$.
\end{prop}
\begin{proof}
    Suppose $e^pv_{-r}$ is such that $2p \leq r$. Then
    \begin{equation*}
        \star(e^pv_{-r}) = (-1)^{\frac{r(r-1)}{2}} \frac{p!}{(r-p)!}e^{r-p}v_{-r},
    \end{equation*}
    from which two cases arise. First, if $r-p=r/2$, then $r=p$ and we have that
    \begin{equation*}
        \star^2(e^pv_{-r})=(-1)^{\frac{r(r-1)}{2}} \frac{p!}{(r-p)!} (-1)^{\frac{r(r-1)}{2}} \frac{0!}{r!}e^pv_{-r} = e^pv_{-r}.
    \end{equation*}
    The other case is when $r-p>r/2$, where using Proposition \ref{e^kf^ke^rv_i} we have
    \begin{align*}
        \star^2(e^pv_{-r}) &= (-1)^{\frac{r(r-1)}{2}} \frac{p!}{(r-p)!}\star (e^{r-p}v_{-r}) \\
        &= (-1)^{\frac{r(r-1)}{2}} \frac{p!}{(r-p)!}\left((-1)^{\frac{r(r-1)}{2}} \frac{(r-(r-p))!}{(r-p)!}f^{2(r-p)-r}e^{r-p}v_{-r} \right) \\
        &=\left(\frac{p!}{(r-p)!} \right)^2 f^{r-2p}e^{r-2p}e^pv_{-r} \\
        &=\left(\frac{p!}{(r-p)!} \right)^2 \frac{(p+(r-2p))! (r-p)!}{p! (r-p-(r-2p))!}e^pv_{-r} \\
        &= e^pv_{-r},
    \end{align*}
    thus if $2p\leq r$, we have that $\star^2(e^pv_{-r})=e^pv_{-r}$. The case when $2p>r$ follows similarly, though case analysis is not needed.
\end{proof}
With this, we can extend the star linearly to obtain a linear map $\star:V \ra V$ that sends $V_{-i}$ to $V_i$. Moreover, if we define subspaces
\begin{equation*}
    V_{p,r}:= \{e^pv_{-r}| 0 \leq p \leq r \} \subset V_{2p-r}
\end{equation*}
then the star above gives us an isomorphism
\begin{align*}
    \star:V_{p,r} &\ra V_{r-p,r} \\
    e^pv_{-r} &\mapsto (-1)^{\frac{r(r-1)}{2}} \frac{p!}{(r-p)!}e^{r-p}v_{-r}.
\end{align*}
\begin{remark}
    These $V_{p,r}$ are typically denoted by $\mathcal{L}_{r,s}$ or $\mathcal{L}_{p,q}$ in the literature, see \cite{tseng2012cohomologyhodgetheorysymplectic}, \cite{HoSym}.
\end{remark}
We also claim that $\star d\star=\pm \delta$, where sign depends on the degree of the argument. To show this, we need a technical lemma first that will show up later as well.
\begin{lemma}
    Suppose that $w_i \in V_i \cap \ker(f)$. Then the primitive decomposition of $dw_i$ only has at most two nonzero terms.
\end{lemma}
\begin{proof}
    First, note that 
    \begin{equation*}
        f^2dw_i = f(df+\delta)w_i = f\delta w_i = \delta f w_i =0,
    \end{equation*}
    as $w_i$ is primitive. Taking a primitive decomposition of $dw_i$ we have
    \begin{equation*}
        dw_i = \sum_{k \geq 0}e^k v_{i+1-2k},
    \end{equation*}
    so applying $f^2$ we have
    \begin{equation*}
        0 = \sum_{k\geq 0} f^2 e^kv_{i+1-2k}
    \end{equation*}
    where $v_{i+1-2k}\in V_{i+1-2k}\cap \ker(f)$.
    This means for all $k \geq 2$, we had to have had that $e^kv_{i+1-2k}=0$, but as $e^i$ acts injectively, we have that $v_{i+1-2k}=0$ for these $k$, leaving us with
    \begin{equation*}
        dw_i = v_{i+1}+ev_{i-1}.
    \end{equation*}
\end{proof}
\begin{definition}
    Given $v_i \in V_i\cap \ker(f)$ with $dv_i = v_{i+1}+ev_{i-1}$, define
    \begin{align*}
        \dee v_i = v_{i+1}, \quad 
        \deebar  v_i = v_{i-1}.
    \end{align*}
\end{definition}
These are the same operators as $\p_+$ and $\p_-$ as introduced in \cite{tseng2012cohomologyhodgetheorysymplectic} where $\dee$ corresponds to $\p_+$ and $\deebar$ corresponds to $\p_-$. Using these two operators along with the more general form of the star isomorphism, we have the following previous claimed fact.

\begin{prop}
    For any $v\in V_{i}$, we have that $\delta v = (-1)^{i+1}\star d\star v$.
\end{prop}
\begin{proof}
    It suffices to check this on elements of the form $e^pv_{-r}$ where $v_{-r}$ is primitive, and $2p-r=i$. On one hand we have that
    \begin{align*}
        \delta (e^pv_{-r}) &= (fd-df)e^pv_{-r} \\
        &=fe^p dv_{-r} - d(e^pf - pe^{p-1}(h+p-1))v_{-r} \\
        &= fe^p dv_{-r} +p(p-r-1)e^{p-1}dv_{-r} \\
        &=(e^pf - pe^{p-1}(h+k-1))(\dee v_{-r} + e\deebar v_{-r}) + p(p-r-1)e^{p-1}(\dee v_{-r}+e \deebar v_{-r}) \\
        &=-pe^{p-1}\dee v_{-r}+ (r+1-p)e^p \deebar v_{-r}.
    \end{align*}
    On the other hand, the $\star$ isomorphism gives us
    \begin{align*}
        \star d \star (e^pv_{-r}) &= (-1)^{\frac{r(r-1)}{2}}\frac{p!}{(r-p)!}\left( (-1)^{\frac{(r+1)r}{2}}\frac{(r-p)!}{(p-1)!}e^{p-1}\dee v_{-r} + (-1)^{\frac{(r-1)(r-2)}{2}} \frac{(r-p+1)!}{p!} e^{p}\deebar v_{-r} \right) \\
        &=(-1)^{r^2}pe^{p-1}\dee v_{-r} + (-1)^{(r-1)^2}(r-p+1) e^p \deebar v_{-r} \\
        &=(-1)^{r+1}(-pe^{p-1}\dee v_{-r} +(r+1-p)e^p \deebar v_{-r}) \\
        &=(-1)^{i+1} \delta(e^pv_{-r}).
    \end{align*}
    
\end{proof}

  Now with our star established, we make the following definitions in the spirit of classical Hodge theory.
 \begin{definition}
    Let $V \in \mathscr{C}_n$, and $v \in V$. Then we define the following:
    \begin{itemize}
        \item $v$ is ``closed'' if $v \in \ker(d)$.
        \item $v$ is ``exact'' if $v \in \im(d)$.
        \item $v$ is ``coclosed'' if $v \in \ker(\delta)$.
        \item $v$ is ``coexact'' if $v \in \im (\delta)$.
    \end{itemize}
\end{definition}
 We also adopt the language of harmonicity as follows:
 \begin{definition}
     Let $V \in \mathrm{Ob}(\mathscr{C}_n)$, and fix $v \in V$. We say that $v$ is ``harmonic'' if $v \in \ker(d) \cap \ker(\delta)$, and we denote the submodule of harmonic elements by $\widehat{V}$.
 \end{definition}
\begin{prop}
 For $V \in \mathrm{Ob}(\mathscr{C}_n)$, $\widehat{V}$ is a $\mathfrak{g}$-submodule of $V$.
\end{prop}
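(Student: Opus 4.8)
The plan is to verify $\mathfrak{g}$-stability one generator at a time. Since $\widehat{V}=\ker(d)\cap\ker(\delta)$ is manifestly a linear subspace of $V$ and $\mathfrak{g}$ is generated as a Lie superalgebra by $e,f,h,d,\delta$, it suffices to show that each of these five operators carries $\widehat{V}$ into itself; equivalently, for $v\in\widehat{V}$ and $X\in\{e,f,h,d,\delta\}$ we must check that $d(Xv)=0$ and $\delta(Xv)=0$. For $X=d$ and $X=\delta$ there is nothing to do, since $dv=0$ and $\delta v=0$ already lie in $\widehat{V}$.

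For the three even generators I would invoke the defining bracket relations together with $dv=\delta v=0$. Concretely, $[e,d]=0$ yields $d(ev)=e\,dv=0$ and $[e,\delta]=d$ yields $\delta(ev)=e\,\delta v-dv=0$; $[f,d]=\delta$ yields $d(fv)=f\,dv-\delta v=0$ and $[f,\delta]=0$ yields $\delta(fv)=f\,\delta v=0$; finally $[h,d]=d$ yields $d(hv)=h\,dv-dv=0$ and $[h,\delta]=-\delta$ yields $\delta(hv)=h\,\delta v+\delta v=0$. Hence $e\widehat{V},\,f\widehat{V},\,h\widehat{V}\subseteq\widehat{V}$, which completes the argument. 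As a by-product, $\widehat{V}$ inherits a diagonalizable, integral $h$-action with eigenvalues in $[-n,n]$, so it is in fact again an object of $\mathscr{C}_n$.

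The computation is routine; the only point requiring a little care is the graded-commutator convention: $e,f,h$ are even while $d,\delta$ are odd, so for $X\in\{e,f,h\}$ the brackets $[X,d]$ and $[X,\delta]$ are ordinary commutators $Xd-dX$ and $X\delta-\delta X$, which is precisely what legitimizes the rearrangements above. I do not anticipate a genuine obstacle here: structurally, the statement amounts to the observations that the odd part $\operatorname{span}\{d,\delta\}$ acts by zero on its own common kernel, and that $\operatorname{span}\{d,\delta\}$ is stable under the adjoint action of $e,f,h$, so the even part preserves $\ker(d)\cap\ker(\delta)$ as well.
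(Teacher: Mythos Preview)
Your proof is correct and follows essentially the same approach as the paper: both verify that $e$, $f$, $d$, $\delta$ preserve $\widehat{V}$ via the bracket relations. The only cosmetic difference is that the paper infers $h$-stability from $h=[e,f]$ rather than checking the $[h,d]$ and $[h,\delta]$ relations directly as you do.
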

\begin{proof}
    This is clearly a vector subspace and closed under the action of $d$ and $\delta$. If $v \in \widehat{V}$, then $dev=edv=0$ and $\delta ev = e\delta v -dv=0$. Likewise $dfv = fdv+\delta v=0$ and $\delta f v= f \delta v=0$. The closedness under $h$ follows from $e$ and $f$.
\end{proof}
One can regard $V$ as a bi-differential $\Z$-graded complex after writing $V$ out as
\[\begin{tikzcd}[ampersand replacement=\&]
	\dots \& {V_{i-1}} \& {V_i} \& {V_{i+1}} \& \dots
	\arrow["d", shift left, from=1-1, to=1-2]
	\arrow["d", shift left, from=1-2, to=1-3]
	\arrow["d", shift left, from=1-3, to=1-4]
	\arrow["d", shift left, from=1-4, to=1-5]
	\arrow["\delta", shift left, from=1-5, to=1-4]
	\arrow["\delta", shift left, from=1-4, to=1-3]
	\arrow["\delta", shift left, from=1-3, to=1-2]
	\arrow["\delta", shift left, from=1-2, to=1-1]
\end{tikzcd}\]
\begin{definition}
   Let $V \in \mathrm{Ob}(\mathscr{C}_n)$ with decomposition $V = \bigoplus_i V_i$. The cohomology of $V$ regarded as the above complex in $d$ at the $i$-th position is denoted $H^i(V,d)$.
\end{definition}
\begin{definition}
    Let $V \in \mathrm{Ob}(\mathscr{C}_n)$ with decomposition $V = \bigoplus_i V_i$. The homology of $V$ regarded as the above complex in in $\delta$ at the $i$-th position is denoted $H_i(V,\delta)$.
\end{definition}
\begin{lemma}
Let $V \in \mathrm{Ob}(\mathscr{C}_n)$, and $v \in V$.
    If $v$ is closed, then $\star v$ is coclosed. Similarly, if $v$ is coclosed, then $\star v$ is is closed.
\end{lemma}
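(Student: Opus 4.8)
The plan is to reduce immediately to the homogeneous case and then simply unwind the defining relation $\delta|_{V_i} = (-1)^{i+1}\star d\star$ together with the fact that $\star$ is an involution. Since $d$ raises the $h$-degree by one and $\delta$ lowers it by one, for $v = \sum_i v_i$ with $v_i \in V_i$ we have $dv = 0$ if and only if $dv_i = 0$ for all $i$, and likewise $\delta v = 0$ if and only if $\delta v_i = 0$ for all $i$. Since $\star$ is degree-reversing, it therefore suffices to prove both implications for a homogeneous element $v \in V_i$.

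So fix $v \in V_i$ with $dv = 0$. Then $\star v \in V_{-i}$, and applying the relation defining $\delta$ at degree $-i$ gives $\delta(\star v) = (-1)^{-i+1}\star d\star(\star v) = (-1)^{1-i}\star(dv)$, where we used $\star\star = \id$. As $dv = 0$ this vanishes, so $\star v$ is coclosed. For the converse, fix $v \in V_i$ with $\delta v = 0$. The relation defining $\delta$ at degree $i$ reads $0 = \delta v = (-1)^{i+1}\star d\star v$; applying $\star$ to both sides and using that $\star$ is injective (being an involution) yields $(-1)^{i+1} d\star v = 0$, hence $d\star v = 0$ and $\star v$ is closed.

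There is essentially no obstacle here: the only point requiring a little care is to invoke the relation defining $\delta$ on the correct graded piece — $V_{-i}$ in the first implication and $V_i$ in the second — and to keep track of the sign $(-1)^{i\pm 1}$, which ultimately plays no role since we are only asserting a vanishing statement.
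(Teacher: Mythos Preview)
Your proof is correct and follows essentially the same approach as the paper's: both arguments simply unwind the defining relation $\delta = \pm\star d\star$ and use that $\star$ is an involution. The only difference is cosmetic---you reduce explicitly to the homogeneous case and track the sign $(-1)^{i+1}$ carefully, while the paper works with an unspecified $\pm$ throughout since the sign is irrelevant to the vanishing conclusion.
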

\begin{proof}
    Suppose that $dv=0$. Then we have that 
    \begin{align*}
        \delta (\star v) = \pm \star d\star \star v = \pm \star dv =0.
    \end{align*}
    Now, suppose $\delta v =0$, then we have that
    \begin{align*}
        \delta v &= 0 \\
        \pm \star d\star v &= 0 \\
        \pm d \star v &= \star0=0 \\
        \implies d(\star v) &= 0.
    \end{align*}
\end{proof}

\begin{lemma}
Let $V \in \mathrm{Ob}(\mathscr{C}_n)$, and $v \in V$.    If $v$ is exact, then $\star v$ is coexact. If $v$ is coexact, then $\star v$ is exact.
\end{lemma}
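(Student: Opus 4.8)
The plan is to reduce immediately to the homogeneous case and then manipulate the defining identity $\delta|_{V_i} = (-1)^{i+1}\star d\star$ together with the fact that $\star$ restricted to each $V_i$ is an involution, so that $\star\star$ acts as $\id$ wherever it appears. Since $d$ carries $V_{i-1}$ into $V_i$, if $v=dw$ then writing $w=\sum_i w_i$ with $w_i\in V_i$ gives $v=\sum_i dw_i$ with $dw_i\in V_{i+1}$; as the summands lie in distinct eigenspaces of $h$, it is enough to treat $v$ homogeneous, say $v\in V_i$ with $v=dw$ and $w\in V_{i-1}$. The same reduction handles the coexact case.

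For the first assertion, I would set $u=(-1)^i\star w\in V_{-i+1}$ and compute, using the defining identity on $V_{-i+1}$ and $\star\star=\id$,
\begin{align*}
    \delta u &= (-1)^i(-1)^{(-i+1)+1}\,\star d\star(\star w) \\
    &= (-1)^i(-1)^{-i}\,\star d w \\
    &= \star v,
\end{align*}
so $\star v\in\im(\delta)$ is coexact. For the second assertion, suppose $v\in V_i$ with $v=\delta w$ and $w\in V_{i+1}$. The defining identity on $V_{i+1}$ gives $v=(-1)^{i+2}\star d\star w=(-1)^i\star d\star w$, hence
\begin{align*}
    \star v &= (-1)^i\,\star\star d\star w \\
    &= (-1)^i\, d(\star w),
\end{align*}
which exhibits $\star v\in\im(d)$ as exact.

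There is no genuine obstacle here; the only point requiring care is the bookkeeping of the sign $(-1)^i$ as the $h$-degree shifts under $d$, $\delta$, and $\star$, and keeping straight which copy of the involution relation is being applied. Alternatively, both halves follow at once from the observation that $\star$ intertwines the complexes $(V,d)$ and $(V,\delta)$ up to the sign $(-1)^{i+1}$ — which is exactly the content of the defining relation — so that the present statement is the image-level counterpart of the closedness statement in the previous lemma.
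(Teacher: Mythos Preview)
Your proof is correct and follows essentially the same route as the paper: insert $\star\star=\id$ and use the defining relation $\delta=(-1)^{i+1}\star d\star$ to exhibit $\star v$ explicitly as $\delta(\pm\star w)$ or $d(\pm\star w)$. The only difference is cosmetic --- you track the signs precisely where the paper simply writes $\pm$, and you spell out the reduction to homogeneous elements.
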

\begin{proof}
    Suppose that $v$ is exact, that is $v= dw$. Then we have that
    \begin{equation*}
        \star v = \star dw = \star d\star \star w = \pm \delta(\star w).
    \end{equation*}
    Similarly, suppose $v= \delta w$. Then we have 
    \begin{equation*}
        \star v = \star \delta w= \pm \star \star d\star w = d(\pm \star w).
    \end{equation*}
\end{proof}
\begin{theorem}
   For $V \in \mathrm{Ob}(\mathscr{C}_n)$, there is an isomorphism 
    \begin{equation*}
        H^i(\ker(\delta),d) \cong H^{-i}(\ker(d), \delta)
    \end{equation*}
    given by the $\star$ map.
\end{theorem}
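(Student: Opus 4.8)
The plan is to show that the \emph{star operator itself} induces the claimed isomorphism, with its own restriction in the opposite direction serving as a two-sided inverse (this works because $\star$ is an involution on $V$ reversing the grading). First I would identify the chain groups concretely. In the complex $(\ker(\delta),d)$ a $d$-cocycle in degree $i$ is an element $v\in V_i$ with $\delta v=0$ and $dv=0$, i.e.\ exactly an element of $\widehat V_i=V_i\cap\ker(d)\cap\ker(\delta)$; likewise a $\delta$-cocycle of $(\ker(d),\delta)$ in degree $-i$ is precisely $\widehat V_{-i}$. The lemma on closed versus coclosed elements, combined with $\star^2=\id$, then gives immediately that $\star$ restricts to a linear isomorphism $\widehat V_i\cong\widehat V_{-i}$: if $v$ is both closed and coclosed then $\star v$ is both coclosed and closed, and $\star$ on $\widehat V_{-i}$ inverts it.

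Next I would check the behaviour on coboundaries. A $d$-coboundary of $(\ker(\delta),d)$ landing in degree $i$ has the form $dw$ with $w\in\ker(\delta)\cap V_{i-1}$. Running the computation in the proof of the exactness lemma (which amounts to inserting $\star\star=\id$ and using $\delta|_{V_{i-1}}=(-1)^{i}\star d\star$) one gets $\star(dw)=\pm\,\delta(\star w)$, and by the closed/coclosed lemma $\star w\in\ker(d)$ and it sits in degree $-i+1$; hence $\star(dw)$ is a legitimate $\delta$-coboundary of $(\ker(d),\delta)$ in degree $-i$. The same computation read backwards — starting from $\delta u$ with $u\in\ker(d)\cap V_{-i+1}$, writing $u=\star(\star u)$ and noting $\star u\in\ker(\delta)$ has degree $i-1$ — shows every such $\delta$-coboundary equals $\star$ of a $d$-coboundary. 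Thus $\star$ carries the coboundaries of one complex bijectively onto those of the other.

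Combining the two steps, $\star$ sends cocycles to cocycles and coboundaries onto coboundaries, so it descends to a well-defined linear map $H^i(\ker(\delta),d)\to H^{-i}(\ker(d),\delta)$. Applying the same argument with the two complexes interchanged (again using $\star^2=\id$) yields a two-sided inverse, so the induced map is an isomorphism, and it is by construction the map induced by $\star$.

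I expect the only real friction to be sign bookkeeping: carrying the degree-dependent factors $(-1)^{i+1}$ from the definition of $\star$ through the identities $\star(dw)=\pm\delta(\star w)$ and $\star(\delta u)=\pm d(\star u)$, and verifying in each case that the auxiliary element ($\star w$, respectively $\star u$) lands in the correct kernel so that it is an admissible chain in the target complex. That last point is exactly what the two preceding lemmas supply, so no new idea is needed; it just has to be organized cleanly so that ``the $\star$ map'' is manifestly well defined on cohomology.
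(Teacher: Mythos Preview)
Your proposal is correct and follows essentially the same approach as the paper: both identify the cocycles on each side with $\widehat V_i$ and $\widehat V_{-i}$, invoke the two preceding lemmas to see that $\star$ exchanges closed with coclosed and exact with coexact, and conclude that $[v]\mapsto[\star v]$ is a well-defined isomorphism. The only cosmetic difference is that the paper checks well-definedness, surjectivity, and injectivity separately, whereas you package the argument more symmetrically by showing $\star$ is a bijection on cocycles and on coboundaries and then citing $\star^2=\id$ to get the inverse; this is the same content.
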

\begin{proof}
    Define a map 
    \begin{equation*}
        \varphi:\widehat{V}_i/d(V_{i-1}\cap \ker(\delta)) \ra \widehat{V}_{-i}/\delta(V_{i+1}\cap \ker(d))
    \end{equation*}
    given by sending $[v] \mapsto [\star v]$. This is well-defined by the preceding lemmas, and is surjective as for any class $[v]$, it has preimage $[\star v]$. To see injectivity, suppose that $\varphi [v]$ vanishes, then by definition we have
    \begin{equation*}
        \star v = \delta w = \pm \star  d \star w
    \end{equation*}
    for some $w \in \ker(d) \cap V_{i+1}$, with sign depending on the grading ofthe argument. Applying $\star$ to the above relation we have
    \begin{equation*}
        v= \pm d \star w = d( \pm \star w)
    \end{equation*}
    which means that $[v]$ was the zero class. The only possible point of confusion is why $\pm \star w$ is in the kernel of $\delta$, but this again follows from the preceding lemmas.
\end{proof}
\subsection{Equivalence of Surjectivity}
The following theorem was established independently by Yan and Mathieu:
\begin{theorem}[Yan \cite{YAN1996143}, Mathieu \cite{Mathieu}]
    Let $(M^{2n},\omega)$ be a symplectic manifold. Then the following are equivalent:
    \begin{enumerate}
    \item The strong Lefschetz map $[\omega^k]:H^{n-k}_{dR}(M) \ra H^{n+k}_{dR}(M)$ is surjective.
    \item Every deRham cohomology class has has a (symplectic) harmonic representative.
\end{enumerate}
\end{theorem}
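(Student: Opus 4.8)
The plan is to deduce both implications from the module-theoretic picture by regarding $\Omega^\bullet(M)$ as an object of $\mathscr{C}_n$. Concretely, let $e=L=\omega\wedge-$, let $h$ act on $\Omega^{n+i}(M)$ as multiplication by $i$, let $f=\Lambda$ be the symplectic adjoint, $d$ the deRham differential, and $\delta=[\Lambda,d]$ the Brylinski codifferential; one checks these satisfy the defining relations of $\mathfrak{g}$ and that the resulting module $V:=\Omega^\bullet(M)$ lies in $\mathscr{C}_n$, since the $h$-spectrum is exactly $\{-n,\dots,n\}$ and $V=\bigoplus_k\Omega^k(M)$ is the eigenspace decomposition. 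Under the identification $V_i=\Omega^{n+i}(M)$ one has $H^i(V,d)=H^{n+i}_{dR}(M)$, the Lefschetz map $[\omega^k]\colon H^{n-k}_{dR}(M)\to H^{n+k}_{dR}(M)$ becomes $[e^k]\colon H^{-k}(V,d)\to H^k(V,d)$, and ``$c$ has a symplectic harmonic representative'' becomes ``$c$ lies in the image of $\widehat{V}_i\to H^i(V,d)$''. So the theorem reduces to the purely algebraic claim: for $V\in\mathrm{Ob}(\mathscr{C}_n)$, the maps $[e^k]$ are surjective for all $k\geq 0$ if and only if every class of $H^\bullet(V,d)$ has a representative in $\widehat{V}$.

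For ``harmonic representatives exist $\Rightarrow$ Lefschetz surjective'' I would argue directly. Let $k\geq 0$ and $c\in H^k(V,d)$; pick a harmonic representative $v\in\widehat{V}_k$. Since $\widehat{V}$ is a $\mathfrak{g}$-submodule of $V$ (hence again an object of $\mathscr{C}_n$), the Proposition that $e^k\colon W_{-k}\to W_k$ is bijective for any $W\in\mathrm{Ob}(\mathscr{C}_n)$ applies to $W=\widehat{V}$ and produces $w\in\widehat{V}_{-k}$ with $e^kw=v$. As $w$ is closed and $e$ descends to $d$-cohomology (because $[e,d]=0$), the class $[w]\in H^{-k}(V,d)$ satisfies $[e^k][w]=[v]=c$, so $[e^k]$ is surjective.

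The reverse implication is the substantive one, and is where I expect the main difficulty. Here I would transport the Yan--Mathieu strategy to the algebraic setting. Using the structure theorem of the excerpt, every $v\in V_i$ has a unique Lefschetz decomposition $v=\sum_{r\geq 0}e^r v_r$ with each $v_r$ primitive; one first records the two auxiliary facts that for primitive $v_r$ the element $dv_r$ again decomposes as $a_r+eb_r$ with $a_r,b_r$ primitive (this is essentially the splitting $d=\dee+e\,\deebar$ on primitives), and that $\delta v_r$ is a nonzero scalar multiple of the primitive component $b_r$ in the relevant degree range, so that a primitive element is coclosed exactly when the ``$b$-component'' of its differential vanishes. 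Then, given a closed $v$, one performs a downward induction on the highest Lefschetz level $r$ occurring in its decomposition: the surjectivity of the appropriate map $[e^m]$ is used to solve, at the level of cohomology classes, an equation $e^m\xi=(\text{primitive obstruction extracted from }v)$, and lifting a representative of $\xi$ back produces an exact form whose subtraction from $v$ strictly lowers the top Lefschetz level while preserving the $d$-class. Iterating drives $v$ to a representative all of whose primitive components are coclosed, i.e.\ to a harmonic one.

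The delicate points, and the reason this direction is the real obstacle, are: getting the bracket and degree bookkeeping right so that the obstruction at each stage genuinely lands in the domain of a Lefschetz map controlled by the hypothesis; and ensuring termination, which holds because the Lefschetz levels are bounded by $n$. I would also stress that, because no compactness or Poincar\'e duality is assumed here, the whole argument must be run with ``surjective'' throughout: the statement is genuinely weaker than the $d\delta$-lemma equivalence proved later (where $H^\bullet(V,d)$ need not even be finite dimensional in each degree), so it cannot simply be quoted from that theorem; conversely, once Poincar\'e duality is available the surjections in (1) upgrade to isomorphisms and the statement aligns with the full equivalence.
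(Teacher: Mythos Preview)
Your setup and the ``harmonic $\Rightarrow$ Lefschetz'' direction are correct and coincide with the paper's argument: one uses that $\widehat{V}$ is a $\mathfrak{g}$-submodule, so $e^k\colon\widehat{V}_{-k}\to\widehat{V}_k$ is bijective, and the surjections $\widehat{V}_{\pm k}\to H^{\pm k}(V,d)$ force $[e^k]$ to be surjective.

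The reverse direction, however, is where your sketch drifts from a workable argument. The phrase ``subtraction from $v$ strictly lowers the top Lefschetz level'' cannot be the mechanism: classes such as $[\omega^k]=[e^k\cdot 1]\in H^k(V,d)$ have \emph{no} representative of lower top Lefschetz level, so an induction whose step eliminates the top $e^r$-piece will simply stall. What one can hope to do is adjust the primitive components so that each becomes symphonic, not remove them; your final sentence (``all of whose primitive components are coclosed'') has the right target, but the inductive step you describe does not reach it.

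The paper follows Yan's route, which is organized differently and avoids this trap. One first reduces to nonpositive degree (since $[e^i]$ carries harmonic representatives to harmonic representatives, surjectivity of $[e^i]$ then handles $H^i$ for $i>0$). For $i\geq 0$ one proves the key cohomological decomposition
\[
H^{-i}(V,d)=\im[e]+\ker[e^{i+1}],
\]
using only surjectivity of $[e^{i+2}]$: given $[v_{-i}]$, choose $[\beta_{-i-2}]$ with $[e^{i+2}\beta_{-i-2}]=[e^{i+1}v_{-i}]$ and write $[v_{-i}]=[v_{-i}-e\beta_{-i-2}]+[e][\beta_{-i-2}]$. Then one inducts on the degree, upward from $-n$ (where every closed form is trivially harmonic) and $-n+1$. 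The $\im[e]$ summand is handled by the inductive hypothesis together with Lemma~\ref{HarreptoHarrep}. For the $\ker[e^{i+1}]$ summand, if $e^{i+1}\alpha_{-i}=d\beta_{i+1}$, one uses bijectivity of $e^{i+1}\colon V_{-i-1}\to V_{i+1}$ to write $\beta_{i+1}=e^{i+1}\gamma_{-i-1}$ and checks that $\alpha_{-i}-d\gamma_{-i-1}$ is \emph{primitive} (it lies in $\ker e^{i+1}$), hence coclosed, hence harmonic. So the correct inductive invariant is the cohomological degree, not the Lefschetz level of a chosen representative, and the role of the Lefschetz hypothesis is to produce the splitting $\im[e]+\ker[e^{i+1}]$ rather than to ``solve $e^m\xi=(\text{obstruction})$'' at the form level.
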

We can rephrase this purely as an algebraic equivalence: Let $V \in \mathrm{Ob}(\mathscr{C}_n)$, then the following are equivalent: 
\begin{enumerate}
        \item For all $0 \leq i \leq n$, the map $[e^i]:H^{-i}(V,d) \ra H^i(V,d)$ is surjection.
          \item The inclusion $(\ker(\delta),d) \hookrightarrow (V,d)$ induces a surjection in cohomology $H^i(\ker(\delta),d) \ra H^i(V,d)$.
    \end{enumerate}
\begin{lemma}\label{HarreptoHarrep}
    $[e^i]:H^{-i}(V,d) \ra H^i(V,d)$ carries harmonic representatives to harmonic representatives.
\end{lemma}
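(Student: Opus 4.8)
The plan is to prove the stronger cochain-level statement that the operator $e^{i}$ maps the harmonic submodule $\widehat{V}$ into itself; the lemma then follows immediately, since for a harmonic $\alpha$ of degree $-i$ the cochain $e^{i}\alpha$ is by definition the representative of $[e^{i}]([\alpha])$. So fix $i\geq 0$ and a harmonic element $\alpha\in\widehat{V}_{-i}$, meaning $d\alpha=0$ and $\delta\alpha=0$. By the relation $[h,e^{i}]=2ie^{i}$ established above, $e^{i}\alpha\in V_{i}$, so it is a legitimate candidate representative in degree $i$, and it remains only to check that it is closed and coclosed.

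For closedness, since $[e,d]=0$ the operators $e$ and $d$ commute, hence so do $e^{i}$ and $d$, and therefore $d(e^{i}\alpha)=e^{i}(d\alpha)=0$. For coclosedness I would invoke Lemma~\ref{e^kdelta}, which gives $[e^{i},\delta]=ie^{i-1}d$, so that
\[
\delta(e^{i}\alpha)=e^{i}(\delta\alpha)-[e^{i},\delta]\alpha=e^{i}(\delta\alpha)-ie^{i-1}(d\alpha)=0,
\]
because both $\delta\alpha$ and $d\alpha$ vanish. Hence $e^{i}\alpha\in\ker(d)\cap\ker(\delta)=\widehat{V}_{i}$, i.e.\ $e^{i}\alpha$ is a harmonic representative of $[e^{i}]([\alpha])$.

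There is essentially no obstacle here: the entire content is already packaged into the two bracket identities $[e,d]=0$ and $[e^{i},\delta]=ie^{i-1}d$ proved earlier, so the proof is a two-line computation. The only point worth flagging is interpretive rather than computational — ``carries harmonic representatives to harmonic representatives'' should be read as the assertion that the distinguished representative $e^{i}\alpha$ of the image class is itself harmonic (which is exactly what the two displayed equalities show), not as a claim about arbitrary representatives; well-definedness of $[e^{i}]$ on $d$-cohomology is already subsumed in its being stated as a map $H^{-i}(V,d)\to H^{i}(V,d)$.
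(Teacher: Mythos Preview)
Your proof is correct and takes essentially the same approach as the paper: both arguments reduce to the two identities $[e,d]=0$ and $[e^{i},\delta]=ie^{i-1}d$ (Lemma~\ref{e^kdelta}) to verify that $e^{i}\alpha$ is closed and coclosed whenever $\alpha$ is harmonic. The only cosmetic difference is that the paper also spells out the cohomologous-representative step explicitly, which you correctly note is already implicit in $[e^{i}]$ being well-defined on cohomology.
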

\begin{proof}
    Suppose $[v_{-i}] \in H^{-i}(V,d)$ has a harmonic representative $[v_{-i}^0]$. We claim that $[e^iv_{-i}^0]$ is a harmonic representative of $[e^iv_{-i}]$. First note that they're cohomologous, as if $v_{-i}=v_{-i}^0+d\tau_{-i-1}$, then we have that
    \begin{equation*}
        e^iv_{-i} = e^iv_{-i}^0 + e^id\tau_{-i-1} = e^iv_{-i}^0 + de^i\tau_{-i-1}.
    \end{equation*}
    Moreover, we have that $e^iv_{-i}^0$ is harmonic since
    \begin{equation*}
        \delta e^iv_{-i} = e^i \delta v_{-i} - ie^{i-1}dv_{-i} = 0.
    \end{equation*}
\end{proof}
The following is essentially a rephrasing of Yan's original proof \cite{YAN1996143}:
\begin{theorem}[D. Yan]\label{DoubleEquiv}
    Suppose $V \in \mathrm{Ob}(\mathscr{C}_n)$, then the following statements are equivalent:
    \begin{enumerate}
        \item The inclusion $(\ker(\delta),d) \hookrightarrow (V,d)$ induces a surjection in cohomology $H^i(\ker(\delta),d) \ra H^i(V,d)$.
        \item For all $0 \leq i \leq n$, the map $[e^i]:H^{-i}(V,d) \ra H^i(V,d)$ is a surjection.
    \end{enumerate}
\end{theorem}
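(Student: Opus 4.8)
The plan is to rephrase condition (1) so that it lines up with (2), deduce (1) $\Rightarrow$ (2) by a diagram chase against the harmonic submodule, and prove (2) $\Rightarrow$ (1) by Yan's descending induction on degree. First I would record two facts. Since $[d,\delta]=0$ as a graded commutator, $\delta d=-d\delta$, so $d$ maps $\ker(\delta)$ into $\widehat V=\ker(d)\cap\ker(\delta)$; hence the $d$-cocycles of the complex $(\ker(\delta),d)$ are exactly $\widehat V$, all of its $d$-coboundaries lie in $\widehat V$, and therefore the image of $H^i(\ker(\delta),d)\to H^i(V,d)$ is precisely the set of classes admitting a harmonic (closed and coclosed) representative. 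Thus (1) is equivalent to: every class in $H^\bullet(V,d)$ has a harmonic representative. Second, $\widehat V$ is a $\mathfrak g$-submodule of $V$ on which $d$ and $\delta$ act as zero, so it is a Lefschetz module in $\mathscr{C}_n$ and $e^i\colon\widehat V_{-i}\to\widehat V_i$ is a bijection for all $i$ by classical $\mathfrak{sl}_2$ theory. Since the inclusion $\widehat V\hookrightarrow V$ is $\mathfrak g$-equivariant, for $0\le i\le n$ it induces a commuting square with top edge the bijection $e^i\colon\widehat V_{-i}\to\widehat V_i$, vertical edges the maps $\widehat V_{\pm i}\to H^{\pm i}(V,d)$, and bottom edge $[e^i]\colon H^{-i}(V,d)\to H^i(V,d)$; chasing it shows that the image of $\widehat V_i\to H^i(V,d)$ equals $[e^i]$ applied to the image of $\widehat V_{-i}\to H^{-i}(V,d)$.

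Given the reformulation, (1) $\Rightarrow$ (2) is purely formal: (1) says the vertical maps above are onto, so for $0\le i\le n$ the composite $\widehat V_{-i}\xrightarrow{e^i}\widehat V_i\to H^i(V,d)$ is onto, and by commutativity this composite is $[e^i]$ precomposed with the surjection $\widehat V_{-i}\to H^{-i}(V,d)$; hence $[e^i]\colon H^{-i}(V,d)\to H^i(V,d)$ is onto.

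For (2) $\Rightarrow$ (1) I must supply a harmonic representative for each class. The square chase above shows that if every class of $H^{-i}(V,d)$ is harmonically representable then, $[e^i]$ being onto by (2), so is every class of $H^i(V,d)$; hence it suffices to treat degrees $-i$ with $0\le i\le n$ (larger absolute values being vacuous), which I do by descending induction on $i$, using the conclusion in all degrees $j$ with $|j|>i$. The base case $i=n$ is immediate, since $V_{-n}$ is an extremal weight space so every $\beta\in V_{-n}$ is primitive, and a primitive closed element is harmonic because $\delta=fd-df$ annihilates it. For the inductive step, take a closed $\beta\in V_{-i}$ with primitive decomposition $\beta=\sum_{r=0}^{R}e^r\beta_{(r)}$, $\beta_{(r)}$ primitive of weight $-(i+2r)$. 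Using that $d$ of a primitive of weight $-j$ decomposes as a primitive of weight $-(j-1)$ plus $e$ times a primitive of weight $-(j+1)$ — immediate from $[e,d]=0$ and a weight count — closedness of $\beta$ forces a system of ladder relations on these components whose upshot is that $d\beta_{(R)}$ is itself primitive of weight $-(i+2R-1)$; consequently $e^{i+2R}\beta_{(R)}$ is $d$-closed (its differential is $e^{i+2R}$ applied to a primitive living strictly above the extremal weight $i+2R$) and lies in degree $i+2R$ with $0<i+2R\le n$. By the inductive hypothesis this class has a harmonic representative; comparing it with $e^{i+2R}\beta_{(R)}$ via the bijection $e^{i+2R}\colon V_{-(i+2R)}\to V_{i+2R}$ and a suitable homotopy yields a $d$-exact correction to $\beta$ that lowers its Lefschetz length $R$, and iterating down to $R=0$ leaves $\beta$ primitive and closed, hence harmonic.

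The main obstacle is exactly this last peeling step: turning the harmonic representative furnished in the higher degree $i+2R$ into a genuinely $d$-exact correction of $\beta$ of the correct weight so that the Lefschetz length strictly drops, and arranging the nested inductions (on $R$ and on $|i|$) to terminate — which is where finiteness of the $h$-spectrum, i.e. $V\in\mathscr{C}_n$, is essential. The reformulation of (1), the implication (1) $\Rightarrow$ (2), the base case, and the structural lemma for $d$ on primitive elements are routine by comparison.
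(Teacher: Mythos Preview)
Your reformulation of (1) and the implication (1) $\Rightarrow$ (2) match the paper's argument exactly, as does the reduction of (2) $\Rightarrow$ (1) to nonpositive degrees and the base case. The gap is in the peeling step, where you assert that a purely $d$-exact correction to $\beta$ can lower its Lefschetz length $R$.

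This is false in general. An element $\beta-d\tau\in V_{-i}$ has Lefschetz length $\le R-1$ if and only if $e^{i+R}(\beta-d\tau)=0$, i.e.\ $d(e^{i+R}\tau)=e^{i+R}\beta=e^{i+2R}\beta_{(R)}$; this forces $[e^{i+2R}\beta_{(R)}]=0$ in $H^{i+2R}(V,d)$, which neither (2) nor your inductive hypothesis guarantees. So no $d$-exact correction with the property you claim need exist. The repair is to allow a closed-but-not-exact piece: writing your harmonic representative as $e^{i+2R}h'$ with $h'\in\widehat V_{-(i+2R)}$ and pulling the homotopy back via $e^{i+2R-1}$ to $\gamma'\in V_{-(i+2R-1)}$, one checks that $\beta-e^{R}h'-d(e^{R-1}\gamma')$ lies in $\ker(e^{i+R})\cap V_{-i}$ and hence has length $\le R-1$. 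The extra term $e^{R}h'$ is harmonic but not exact; its class $[e]^R[h']$ must then be handled separately by the outer inductive hypothesis in degree $-(i+2R)$, and $[\beta]$ becomes a sum of two classes each with a harmonic representative. This works, but it is not the ``$d$-exact correction'' you describe, and it requires an honest double induction on $(i,R)$.

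The paper sidesteps this entirely by collapsing your inner induction to a single step. Using surjectivity of $[e^{i+2}]$ it decomposes $H^{-i}(V,d)=\operatorname{im}([e])+\ker([e^{i+1}])$ directly: the $\operatorname{im}([e])$ summand is harmonic by the outer hypothesis in degree $-(i+2)$, while any $[\alpha]\in\ker([e^{i+1}])$ has $e^{i+1}\alpha=d\rho$, and writing $\rho=e^{i+1}\rho'$ yields $e^{i+1}(\alpha-d\rho')=0$, so $\alpha-d\rho'$ is primitive and closed in one stroke. The obstruction you hit is precisely the failure of $[\beta]$ to lie in $\ker([e^{i+1}])$ before the $\operatorname{im}([e])$ piece has been split off.
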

\begin{proof} First we show that the first condition implies the second. Suppose condition $1$ holds, that is the maps $H^i(\ker(\delta),d) \ra H^i(V,d)$ are surjections, or equivalently we can say we have surjections $\widehat{V}_i \ra H^i(V,d)$. We also know by classical $\mathfrak{sl}_2$-representation theory that $e^i:V_{-i}\ra V_{i}$ is a bijection, and this restricts to a bijection on $\widehat{V}_{-i} \ra \widehat{V}_{i}$ since the harmonic elements make up a $\mathfrak{g}$-submodule of $V$. Together these give us the diagram
\[\begin{tikzcd}[ampersand replacement=\&]
	{ \widehat{V}_{-i}} \& {\widehat{V}_i} \\
	{H^{-i}(V,d)} \& {H^i(V,d)}
	\arrow["{e^i}", from=1-1, to=1-2]
	\arrow[from=1-1, to=2-1]
	\arrow[from=1-2, to=2-2]
	\arrow["{[e^i]}"', from=2-1, to=2-2]
\end{tikzcd}\]
where the top and vertical arrows are surjections, implying the bottom map is surjective. We now show that the second condition implies the first; towards this suppose that $[e^i]$ is surjective for all $i$. We'll show that given any cohomology class $[v] \in H^\bullet(V,d)$, we can replace it by a class $[w]$ where $w \in \ker(\delta)$. \newline
\textit{Step 1:}
First, we claim that it suffices to check this on cohomologies in nonpositive degree. To see this, fix $[v_i] \in H^i(V,d)$ for $i>0$. Then as $[e^i]$ as surjective, we have that $[v_i]=[e^i][w_{-i}]$ for some $[w_{-i}] \in H^{-i}(V,d)$, and we know by Propostion \ref{HarreptoHarrep} that $[e^i]$ will carry a harmonic representative of $[w_{-i}]$ to a harmonic representative of $[v_{i}]$. \newline
\textit{Step 2:}
Now, we claim that for $i>0$, $H^{-i}(V,d)$ decomposes as
\begin{equation*}
    H^{-i}(V,d) = \im([e]) + \ker([e^{i+1}]).
\end{equation*}
Fix a class $[v_{-i}] \in H^{-i}(V,d)$ and consider $[e^{i+1}v_{-i}] \in H^{i+2}(V,d)$. Since the $[e^i]$ are surjective, we have that there is class $[\beta_{-i-2}] \in H^{-i-2}(V,d)$ such that $[e^{i+2}\beta_{-i-2}]=[e^{i+1}v_{-i}]$. Now, we can (trivially) write
\begin{equation*}
    [v_{-i}] = ([v_{-i}-e\beta_{-i-2}])+[e]([\beta_{-i-2}]).
\end{equation*}
It's clear that the second summand is in the image of $[e]$, we need only to check that the first is in the kernel of $[e^{i+1}]$. This follows from the definition as 
\begin{equation*}
    [e^{i+1}]([v_{-i}-e\beta_{-i-2}]) = [e^{i+1}v_{-i}-e^{i+2}\beta_{-i-2}] = [e^{i+1}v_{-i} - e^{i+1}v_{-i}]=[0].
\end{equation*}
\newline
\textit{Step 3:}
We now proceed to prove the main claim by induction. First, note that every class in $H^{-n}(V,d)$ and $H^{-n+1}(V,d)$ is harmonic. If $[v_{-n}] \in H^{-n}(V,d)$, then $\delta v_{-n}\equiv 0$ trivially, and so $v_{-n}$ is harmonic. If $[v_{-n+1}] \in H^{-n+1}(V,d)$, then we have that
\begin{equation*}
    \delta v_{-n+1} = [d,f]v_{-n+1} = dfv_{-n+1} - fdv_{-n+1}=0
\end{equation*}
since $fv_{-n+1}\in V_{-n-1}\equiv 0$, and $v_{-n+1}$ is closed. So we only need to check on $-n+2 \leq i \leq 0$, and the $-n+1$ degree serves as the base case. Now, suppose the statement holds for $H^{-i-1}(V,d)$ and fix $[\beta_{-i}] \in H^{-i}(V,d)$. By the above decomposition we have that
\begin{equation*}
    [\beta_{-i}] = [\alpha_{-i}]+ [e]([\beta_{-i-2}]),
\end{equation*}
for $[\beta_{-i-2}]\in H^{-i-2}(V,d)$ and $[\alpha_{-i}] \in H^{-i}(V,d) \cap \ker([e^{i+1}])$. By inductive hypothesis, $[\beta_{-i-2}]$ possesses a harmonic representative, and $[e]$ carries harmonic representatives into harmonic representatives, so we can assume $[e]([\beta_{-i-2}])$ is harmonic. We only need to show that $[\alpha_{-i}]$ admits a harmonic representative. By definition of $[\alpha_{-i}]$, we have that $d\alpha_{-i}=0$ and $e^{i+1}\alpha_{-i}=d\beta_{i+1}$ for some $\beta_{i+1}\in V_{i+1}$. Now, by the bijectivity of $e^{i+1}:V_{-i-1}\ra V_{i+1}$, there exists $\gamma_{-i-1}\in V_{-i-1}$ such that $\beta_{i+1} = e^{i+1}\gamma_{-i-1}$. We claim that $\alpha_{-i}-d\gamma_{-i-1}$ is a harmonic representative of $[\alpha_{-i}]$. It's clear that $[\alpha_{-i}]=[\alpha_{-i}-d\gamma_{-i-1}]$, and that $\alpha_{-i}-d\gamma_{-i-1} \in \ker(d)$, we only need to show that it's in $\ker(\delta)$. Since $\delta=[f,d]$, this reduces to showing that $\alpha_{-i}-d\gamma_{-i-1} \in \ker(f)$, i.e. is primitive. Primitivity here is equivalent to showing that $\alpha_{-i}-d\gamma_{-i-1} \in \ker(e^{i+1})$. Computing we have that
\begin{align*}
    e^{i+1}(\alpha_{-i}-d\gamma_{-i-1}) = e^{i+1}\alpha_{-i} - d(e^{i+1}\gamma_{-i-1}) = e^{i+1}\alpha_{-i} - d\beta_{i+1} = e^{i+1}\alpha_{-i}-e^{i+1}\alpha_{-i}=0.
\end{align*}
\end{proof}

\subsection{Equivalence of Triples}
In the previous subsection we've shown that the following two statements are equivalent for $V \in \mathrm{Ob}(\mathscr{C}_n)$:
\begin{enumerate}
    \item The maps $[e^i]:H^{-i}(V,d) \ra H^i(V,d)$ are surjections.
    \item The inclusion $(\ker(\delta),d) \hookrightarrow (V,d)$ induces a surjection in cohomology.
\end{enumerate}
which in the original paper(s), Mathieu claims is equivalent to a third condition: The $d\delta$-lemma. 
\begin{definition}
    The $d \delta$-lemma holds for $V \in \mathrm{Ob}(\mathscr{C}_n)$ if the following equalities hold:
    \begin{equation*}
        \im(d) \cap \ker(\delta) = \im(\delta) \cap \ker(d)= \im (d \delta).
    \end{equation*}
\end{definition}
This is equivalent to the above two if we replace surjection with isomorphism in both claims; in the original setting of the tangent bundle of a compact manifold, these are equivalent by Poincaré duality. The first of the equivalences - that $d\delta$-lemma is equivalent to the inclusion $(\ker(\delta),d) \hookrightarrow (V, d)$ follows from the following:

\begin{theorem}\label{ddeltaequivrep}
    Let $V \in \mathrm{Ob}(\mathscr{C}_n)$. Then
    the $d\delta$-lemma is equivalent to the inclusion $(\ker(\delta),d) \ra (V, d)$ inducing an isomorphism in cohomology.
\end{theorem}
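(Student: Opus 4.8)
The plan is to interpose a single clean equality between the two conditions. First I would note that the $d\delta$-lemma is really two inclusions, each of which is half automatic: from $d^2=\delta^2=0$ and $[d,\delta]=d\delta+\delta d=0$ one has $\im(d\delta)=\im(\delta d)\subseteq(\im d\cap\ker\delta)\cap(\im\delta\cap\ker d)$, so the content of the $d\delta$-lemma is the reverse inclusions $\im d\cap\ker\delta\subseteq\im(d\delta)$ and $\im\delta\cap\ker d\subseteq\im(d\delta)$. I would then use the $\star$-operator to collapse these to one: by the lemmas already established, $\star$ is an involution carrying closed to coclosed and exact to coexact, hence restricting to a bijection $\im d\cap\ker\delta\xrightarrow{\sim}\im\delta\cap\ker d$, and the intertwining relation $\delta|_{V_i}=(-1)^{i+1}\star d\star$ shows $\star$ also preserves $\im(d\delta)$ setwise. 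Consequently the two halves of the $d\delta$-lemma are equivalent, and the $d\delta$-lemma as a whole is equivalent to the single condition $(\dagger)$: $\ker d\cap\im\delta=\im(d\delta)$.

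The heart of the argument is then to show $(\dagger)$ holds if and only if $\iota_*\colon H^\bullet(\ker\delta,d)\to H^\bullet(V,d)$ is an isomorphism. The cleanest route is the long exact sequence attached to
$$0\to(\ker\delta,d)\xrightarrow{\ \iota\ }(V,d)\to(V/\ker\delta,d)\to 0,$$
which is a short exact sequence of $d$-complexes because $\delta d=-d\delta$ forces $d(\ker\delta)\subseteq\ker\delta$. The map $\delta$ induces an isomorphism of graded vector spaces $V/\ker\delta\xrightarrow{\sim}\im\delta$ (with a degree shift), and under it the differential induced by $d$ becomes $-d$ restricted to $\im\delta$ (again using $\delta d=-d\delta$, which also shows $\im\delta$ is $d$-stable); so $(V/\ker\delta,d)$ is, up to sign and shift, the complex $(\im\delta,d)$. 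From the long exact sequence, $\iota_*$ is an isomorphism in every degree precisely when $H^\bullet(\im\delta,d)=0$, and this vanishing unwinds to exactly $\ker d\cap\im\delta=d(\im\delta)=\im(d\delta)$, i.e. $(\dagger)$. Combining with the first paragraph gives the theorem.

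If one prefers to stay in the element-chasing style of the surrounding text, the implication $(\dagger)\Rightarrow\iota_*$ iso is quick: for closed $v$ one has $\delta v\in\ker d\cap\im\delta=\im(d\delta)$, say $\delta v=d\delta\eta$, and then $v+d\eta$ is a closed and coclosed representative of $[v]$ (surjectivity); for $w\in\ker\delta$ closed with $w=d\tau$ one has $\delta\tau\in\ker d\cap\im\delta=\im(d\delta)$, say $\delta\tau=d\delta\rho$, and then $\tau+d\rho\in\ker\delta$ with $d(\tau+d\rho)=w$ (injectivity). The reverse implication $\iota_*$ iso $\Rightarrow(\dagger)$ is the delicate step: given $x=\delta u\in\ker d$, one first uses injectivity of $\iota_*$ on the $d$-exact, coclosed class $du$ to write $du=d\xi$ with $\xi\in\ker\delta$, so that $u-\xi$ is $d$-closed, and then uses surjectivity of $\iota_*$ to write $u-\xi=\zeta+d\mu$ with $\zeta$ closed and coclosed, whereupon $x=\delta(u-\xi)=\delta d\mu=-d\delta\mu\in\im(d\delta)$.

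The main obstacle I anticipate is precisely this last step — seeing that the isomorphism hypothesis is strong enough to land $x$ in $\im(d\delta)$, which needs both injectivity and surjectivity of $\iota_*$ used in sequence (or, in the homological-algebra version, the correct identification of the quotient complex with $(\im\delta,d)$ together with the degree shift in the connecting map). A secondary but purely bookkeeping difficulty is keeping the grade-dependent signs in the $\star$-conjugation of $d$ and $\delta$ consistent throughout the reduction in the first paragraph.
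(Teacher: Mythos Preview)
Your proof is correct. The paper does not actually prove this theorem: its entire argument is the single line ``See lemma~5.4.1 in \cite{manin1998constructions}.'' So any comparison of approaches is really between your argument and Manin's general bicomplex lemma.

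A couple of remarks on what you do differently and why it works here. Manin's lemma is stated for an arbitrary pair of anticommuting differentials, and in that generality the inclusion $(\ker\delta,d)\hookrightarrow(V,d)$ being a quasi-isomorphism is equivalent only to the \emph{one} inclusion $\ker d\cap\im\delta\subseteq\im(d\delta)$; the other half $\ker\delta\cap\im d\subseteq\im(d\delta)$ does not follow without further structure. Your first paragraph supplies exactly that structure: the $\star$-involution available in $\mathscr{C}_n$ interchanges the two intersections and fixes $\im(d\delta)$, collapsing the two halves of the $d\delta$-lemma to the single condition~$(\dagger)$. This reduction is what makes the paper's statement true as written in $\mathscr{C}_n$, and your argument makes that dependence on $\star$ explicit rather than hiding it behind the citation. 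Your long-exact-sequence identification $H^\bullet(V/\ker\delta,d)\cong H^\bullet(\im\delta,d)$ and the subsequent unwinding to~$(\dagger)$ is the cleanest way to finish, and your element-chasing alternative is also fine; in particular, the ``delicate step'' you flag---using injectivity of $\iota_*$ on $du$ and then surjectivity on $u-\xi$---is exactly right.
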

\begin{proof}
    See \cite[Lemma 5.4.1]{manin1998constructions}.
\end{proof}
\begin{prop}\label{ddeltaimpSLP}
    Let $V \in \mathrm{Ob}(\mathscr{C}_n)$, and suppose that the $d\delta$-lemma is satisfied. Then $[e^k]:H^{-k}(V,d) \ra H^k(V,d)$ is an isomorphism for all $k$.
\end{prop}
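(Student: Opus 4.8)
The plan is to realise $H^{\bullet}(V,d)$, under the $d\delta$-lemma, as an \emph{object of $\mathscr{C}_n$} --- concretely as a quotient of two honest $\mathfrak{g}$-submodules of $V$ --- so that bijectivity of $e^k$ between the degree $-k$ and degree $k$ pieces is forced by the finite-$h$-type structure theory; this yields injectivity and surjectivity of $[e^k]$ in one stroke. (Surjectivity alone is in any case already available: the $d\delta$-lemma makes $(\ker(\delta),d)\hookrightarrow(V,d)$ a quasi-isomorphism by Theorem~\ref{ddeltaequivrep}, hence a surjection on cohomology, and Theorem~\ref{DoubleEquiv} upgrades that to surjectivity of $[e^k]$.)

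First I would compute the cohomology of the subcomplex $(\ker(\delta),d)$ directly from the $d\delta$-lemma. In degree $i$ its cocycles are $\ker(d)\cap\ker(\delta)\cap V_i=\widehat{V}_i$, and its coboundaries are $d\bigl(\ker(\delta)\cap V_{i-1}\bigr)$; since $d$ maps $\ker(\delta)$ into $\ker(\delta)$ (because $\delta d=-d\delta$), this image lies in $\im(d)\cap\ker(\delta)$, which the $d\delta$-lemma identifies with $\im(d\delta)$, and conversely $\im(d\delta)\cap V_i=d\delta(V_i)\subseteq d(\ker(\delta)\cap V_{i-1})$ since $\delta(V_i)\subseteq\ker(\delta)$, so the coboundaries are exactly $\im(d\delta)\cap V_i$. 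Hence $H^i(\ker(\delta),d)=\widehat{V}_i/(\im(d\delta)\cap V_i)$, and by Theorem~\ref{ddeltaequivrep} the inclusion identifies this with $H^i(V,d)$, carrying the coset of a harmonic element $v$ to its class $[v]$.

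Next I would check the two submodules in sight really are objects of $\mathscr{C}_n$. We already know $\widehat{V}$ is a $\mathfrak{g}$-submodule; for $\im(d\delta)$ a one-line bracket computation suffices, e.g. $e(d\delta u)=d(e\delta u)=d(\delta e u+du)=d\delta(eu)$ using $[e,d]=0$ and $[e,\delta]=d$, and similarly $f(d\delta u)=d\delta(fu)$ and $h(d\delta u)=d\delta(hu)$; together with $\im(d\delta)\subseteq\widehat{V}$ this exhibits $\im(d\delta)$ as a submodule of $\widehat{V}$, so $W:=\widehat{V}/\im(d\delta)$ is again an object of $\mathscr{C}_n$, with $W_i=\widehat{V}_i/(\im(d\delta)\cap V_i)$. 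Tracking the identification of the previous paragraph shows it intertwines the $e$-action: it sends $e\cdot(v+\im(d\delta))$ to $[ev]=[e]\,[v]$, so $[e^k]\colon H^{-k}(V,d)\ra H^k(V,d)$ corresponds under it to $e^k\colon W_{-k}\ra W_k$. The latter is a bijection by the proposition that $e^i\colon V_{-i}\ra V_i$ is bijective for every $V\in\mathrm{Ob}(\mathscr{C}_n)$, applied to $W$; this completes the proof.

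I expect the only real friction to be the bookkeeping of the third paragraph: the intermediate complex $(\ker(\delta),d)$ is not stable under $e$ (if $\delta w=0$ then $\delta(ew)=-dw$ need not vanish), so the isomorphism $H^{\bullet}(\ker(\delta),d)\cong H^{\bullet}(V,d)$ carries no $e$-action on its source a priori, and one has to route the $\mathfrak{sl}_2$-action through the genuine submodules $\widehat{V}$ and $\im(d\delta)$ and confirm that the induced map on $W$ is precisely multiplication by $\omega$ on cohomology. Once the $d\delta$-lemma has been spent to pin cohomology down as $\widehat{V}/\im(d\delta)$, a quotient of two objects of $\mathscr{C}_n$, the structure theory (bijectivity of $e^i$ on opposite weight spaces) applies to numerator and denominator separately and forces the quotient map to be bijective --- which is exactly where compactness and Poincar\'e duality entered in the classical tangent-bundle case. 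An alternative injectivity argument I would keep in reserve chains the $\star$-isomorphism $H^i(\ker(\delta),d)\cong H^{-i}(\ker(d),\delta)$ with the form of the $d\delta$-lemma obtained by interchanging $d$ and $\delta$ to compare $H^{-k}(V,d)$ with $H^{k}(V,d)$ directly, but it needs the same module-theoretic input to finish.
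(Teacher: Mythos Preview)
Your proof is correct and takes a genuinely different route from the paper's. The paper argues surjectivity exactly as you note in parentheses, then proves injectivity by a direct computation: given a harmonic $\alpha_{-k}$ with $e^k\alpha_{-k}=d\tau_{k-1}$ for $\tau_{k-1}\in\ker(\delta)$, it observes $e^k\alpha_{-k}\in\im(d)\cap\ker(\delta)=\im(d\delta)$, writes $e^k\alpha_{-k}=d\delta\rho_k$, and then pushes the inverse $\phi^k$ of $e^k$ through the commutator $[e^k,\delta]=ke^{k-1}d$ to exhibit $\alpha_{-k}=d\delta(\phi^k\rho_k)$ explicitly. Your approach instead packages the $d\delta$-lemma once and for all into the identification $H^\bullet(V,d)\cong\widehat{V}/\im(d\delta)$, checks that both numerator and denominator are genuine $\mathfrak{g}$-submodules (your bracket computations are correct), and then invokes bijectivity of $e^k$ on opposite weight spaces for any object of $\mathscr{C}_n$. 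This is cleaner and more structural: it yields injectivity and surjectivity simultaneously, and as a bonus gives a direct proof that $H^\bullet(V,d)$ carries an $\mathfrak{sl}_2$-action (the paper derives this separately as Corollary~\ref{lefschetzgivessl2coho}). The paper's computation, on the other hand, has the virtue of producing an explicit $d$-primitive in $\ker(\delta)$ for $\alpha_{-k}$, which can be useful when one wants to track representatives concretely. Your careful flagging of the fact that $\ker(\delta)$ itself is not $e$-stable, and that the argument must therefore pass through the honest submodules $\widehat{V}$ and $\im(d\delta)$, is exactly the right observation.
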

\begin{proof}
    Since $d\delta$-lemma implies that there are harmonic representatives, we have that that $[e^k]$ is surjective and we may reduce the problem to examining when the map $[e^k]:H^{-k}(\ker(\delta),d) \ra H^k(\ker(\delta),d)$ is an isomorphism. Suppose that $[\alpha_{-k}] \in \ker([e^k])$, so that $e^k\alpha_{-k}=d\tau_{k-1}$ for some $\tau_{k-1}\in V_{k-1} \cap \ker(\delta)$. Then, note that
    \begin{equation*}
        \delta(e^k(\alpha_{-k}))=\delta d \tau_{k-1}=-d\delta \tau_{k-1}=0,
    \end{equation*}
     thus $e^k\alpha_{-k} \in \im(d) \cap \ker(\delta)$. By the $d \delta$-lemma, we then have that there is a $\rho_k$ such that
    \begin{equation*}
        e^k\alpha_{-k} = d \delta \rho_k.
    \end{equation*}
    Since $e^k$ is a bijection for all $k$, note that there exists $\rho_{-k}$ such that $e^k\rho_{-k}=\rho_k$. Using this along with Proposition \ref{ddeltaebracket0},  we then have that
    \begin{align*}
        0 &= e^{k}\alpha_{-k} - d\delta \rho_k \\
         &=e^{k}\alpha_{-k} - d\delta e^k \rho_{-k} \\
         &= e^k(\alpha_{-k}-d\delta \rho_{-k}),
    \end{align*}
    thus by injectivity of $e^k$ we have that $\alpha_{-k}=d\delta \rho_{-k}$.
\end{proof}
\begin{prop}\label{KIisII}
    Let $V \in \mathrm{Ob}(\mathscr{C}_n)$. If $[e^i]:H^{-i}(V,d) \ra H^i(V,d)$ is an isomorphism for all $i$, then $\ker(d) \cap \im (\delta) = \im(d) \cap \im(\delta)$.
\end{prop}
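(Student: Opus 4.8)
The plan is to prove the two inclusions separately. The inclusion $\im(d)\cap\im(\delta)\subseteq\ker(d)\cap\im(\delta)$ is immediate from $d^{2}=0$. For the reverse, note that $\delta^{2}=0$ gives $\im(\delta)\subseteq\ker(\delta)$, so any $v\in\ker(d)\cap\im(\delta)$ is automatically harmonic, $v\in\widehat{V}$; using the decomposition $V=\bigoplus_i V_i$ we may take $v$ homogeneous, $v\in V_{i}$, and write $v=\delta w$ with $w\in V_{i+1}$. The goal is then to show $[v]=0$ in $H^{i}(V,d)$, for then $v\in\im(d)$.

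The mechanism is to push $v$ up the $e$-ladder until $\delta$ becomes manifestly $d$-exact. Concretely, $V_{n+1}=0$ forces $d|_{V_{n}}=0$, so the relation $\delta=[f,d]$ gives $\delta|_{V_{n}}=-df|_{V_{n}}$ and hence $\delta(V_{n})\subseteq\im(d)$; combining this with Lemma \ref{e^kdelta} in the form $e^{k}\delta=\delta e^{k}+k\,e^{k-1}d$ and with $[e,d]=0$ gives
\[
 e^{k}v=e^{k}\delta w=\delta(e^{k}w)+k\,d(e^{k-1}w),
\]
where $\delta(e^{k}w)$ is again closed (this uses $dv=0$) and coexact, i.e. lies in $\ker(d)\cap\im(\delta)\cap V_{i+2k}$. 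Running a downward induction on the weight $i$ — with base cases $i=n$, where $\im(\delta)\cap V_{n}=\delta(V_{n+1})=0$, and $i=n-1$, where $\im(\delta)\cap V_{n-1}=\delta(V_{n})\subseteq\im(d)$ — the inductive hypothesis applied to $\delta(e^{k}w)$ yields $e^{k}v\in\im(d)$ for every $k\ge 1$. When $i\le -1$ this finishes the argument: with $k=-i$ we obtain $e^{-i}v\in\im(d)$, and since $[e^{-i}]\colon H^{i}(V,d)\to H^{-i}(V,d)$ is an isomorphism by hypothesis (so in particular injective), $[v]=0$.

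The weights $i\ge 1$ should be reduced to the weights $-i\le -1$ via the $\star$-operator, which interchanges $\ker(d)\leftrightarrow\ker(\delta)$ and $\im(d)\leftrightarrow\im(\delta)$: applying $\star$ turns the claim for $v\in V_{i}$ into the dual statement $\ker(\delta)\cap\im(d)\cap V_{-i}\subseteq\im(\delta)$, which is handled by the same scheme run with the symmetric boundary input $d|_{V_{-n}}=-\delta e|_{V_{-n}}$ (so $d(V_{-n})\subseteq\im(\delta)$), pushing down the $f$-ladder. The delicate point — and where I expect the real work to be — is reconciling the weight at which $e^{k}v$ (resp. $f^{k}v$) becomes manifestly exact with the weight $i$ at which the Lefschetz isomorphism is available to descend; in particular the weight $i=0$, where the pertinent Lefschetz map is the identity and $[e]\colon H^{0}(V,d)\to H^{2}(V,d)$ is generally not injective, has to be treated by a separate representation-theoretic argument on the harmonic submodule $\widehat{V}$ (stripping off, using $e^{k}v,f^{k}v\in\im(d)$, all nontrivial $\mathfrak{sl}_{2}$-isotypic components of $v$ and dealing with the trivial-isotypic remainder directly). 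This last step is precisely the place where the classical argument invokes Poincar\'e duality on a compact manifold, and here it must be replaced by a direct argument internal to the $\mathfrak{sl}_{2}$-structure of $\widehat{V}$.
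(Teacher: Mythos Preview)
Your induction mechanism for negative weights is sound: writing $e^{k}v=\delta(e^{k}w)+k\,d(e^{k-1}w)$ and descending on the weight does show $e^{k}v\in\im(d)$ for all $k\ge 1$, and for $i\le -1$ the injectivity of $[e^{-i}]$ finishes. But the proposal has a genuine gap at $i\ge 0$. The $\star$-reduction for $i\ge 1$ lands you in the dual problem $\ker(\delta)\cap\im(d)\cap V_{-i}\subseteq\im(\delta)$, and running the ``$f$-ladder'' there produces $f^{k}u\in\im(\delta)$; to descend you would need injectivity of $[f^{k}]$ on $\delta$-homology, which is \emph{not} the hypothesis you have (the hypothesis concerns $[e^{k}]$ on $d$-cohomology), and you do not supply the bridge. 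The weight $i=0$ you yourself flag as unresolved, and your sketch (``stripping off nontrivial $\mathfrak{sl}_{2}$-isotypic components'') is not an argument. Your diagnosis that ``this is where Poincar\'e duality is classically invoked'' is also off the mark: the point of this proposition in the paper is precisely that no duality is needed.

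The missing idea is the \emph{primitive decomposition}. Rather than attacking a general $v\in V_{i}$, write the preimage $w$ (with $v=\delta w$) as $w=\sum_{j}e^{j}w_{-j}$ with each $w_{-j}\in V_{-j}$ primitive; then $d\delta v=0$ forces $d\delta w_{-j}=0$ for each $j$, and the claim reduces to the primitive case. For primitive $w_{-j}$ one has $e^{j+1}w_{-j}=0$, whence
\[
 e^{j+1}\,\delta w_{-j}\;=\;\delta\,e^{j+1}w_{-j}-(j+1)\,e^{j}\,dw_{-j}\;=\;d\bigl(-(j+1)\,e^{j}w_{-j}\bigr),
\]
so $[\delta w_{-j}]\in\ker[e^{j+1}]$ with $[e^{j+1}]:H^{-j-1}(V,d)\to H^{j+1}(V,d)$. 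Since primitives live only in non-positive weight, $j\ge 0$ and $[e^{j+1}]$ is one of the hypothesised isomorphisms, giving $\delta w_{-j}\in\im(d)$ at once. This handles all weights uniformly (including $i=0$, where the relevant Lefschetz map is $[e^{1}]:H^{-1}\to H^{1}$, not $[e]:H^{0}\to H^{2}$), and no $\star$-duality or separate treatment of positive weights is needed.
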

\begin{proof}
    Unfolding the definitions reveals one must check that $d \delta v=0$ implies that $\delta v = d \phi$ for some $\phi$. First, we claim that it suffices to check this on primitive elements. Towards this, suppose that the statement holds on primitive elements, and fix arbitrary $v \in V$. We can decompose $v$ into primitive elements, i.e. $v =\sum e^i v_i$ where the $v_i$ are primitive. Now, since $d \delta v=0$, we have that
    \begin{equation*}
        0 = d \delta v = \sum d \delta e^i v_i  = \sum e^i d \delta v_i,
    \end{equation*}
    where we repeatedly use Proposition \ref{ddeltaebracket0}. Since this sum is direct, this forces $e^i d \delta v_i=0$ for all $i$. Since $e_i$ is acting on the negative weights in this sum, it acts injectively, so $d \delta v_i=0$ for all $i$. By hypothesis, we have then that there exist $\phi_i$ such that $\delta v_i = d \phi_i$, and thus
    \begin{align*}
        \delta v &= \sum \delta e^iv_i  \\
        &= \sum e^i \delta v_i - ie^{i-1}dv_i \\
        &= \sum e^i d \phi_i - i e^{i-1}dv_i \\
        &= d\left( \sum e^i \phi_i  - ie^{i-1}v_i\right).
    \end{align*}
    Now, to see that this holds on primitive form we proceed as follows. Suppose $v_{-i} \in V_{-i}$ is primitive and $d \delta v_{-i} =0$. Then we have that 
    \begin{equation*}
        e^{i+1}\delta v_{-i} = \delta e^{i+1}v_{-i} - (i+1)e^{i}dv_{-i}.
    \end{equation*}
    However, $e^{i+1}v_{-i}=0$ since $v_{-i}$ is primitive, and we're left with
    \begin{equation*}
        e^{i+1}\delta v_{-i} = d(-(i+1)e^iv_{-i}).
    \end{equation*}
    Rephrased this says that $[\delta v_{-i}] \in \ker [e^{i+1}]$, which by the strong Lefschetz property is an isomorphism, meaning that $[\delta v_{-i}]$ is the zero class in cohomology, so $\delta v_{-i}=d\phi$ for some $\phi \in V_{-i-2}$. The only degree where this argument does not hold is $V_{-n}$, but for any $v \in V_{-n}$, $\delta v \equiv 0$, so it's trivially in the image of $d$, i.e. $\delta v =0 = d(0)$.
\end{proof}

\begin{prop}\label{KIisIK} 
  Let $V \in \mathrm{Ob}(\mathscr{C}_n)$, and suppose that $[e^i]:H^{-i}(V,d) \ra H^i(V,d)$ is an isomorphism for all $i$. Then
    $\ker(d) \cap \im (\delta) = \ker(\delta) \cap \im(d)$.
\end{prop}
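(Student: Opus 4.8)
The plan is to prove the two inclusions separately, obtaining one of them essentially for free from Proposition \ref{KIisII} and deducing the other by symmetry through the star operator, so that no new primitive-decomposition argument is needed.

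For the inclusion $\ker(d)\cap\im(\delta)\subseteq\ker(\delta)\cap\im(d)$ I would simply invoke Proposition \ref{KIisII}, which under the standing hypothesis gives $\ker(d)\cap\im(\delta)=\im(d)\cap\im(\delta)$. Since $\delta^2=0$ (the graded relation $[\delta,\delta]=0$ forces $2\delta^2=0$), we have $\im(\delta)\subseteq\ker(\delta)$, hence $\im(d)\cap\im(\delta)\subseteq\im(d)\cap\ker(\delta)=\ker(\delta)\cap\im(d)$. Chaining these gives the first inclusion with no further work.

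For the reverse inclusion $\ker(\delta)\cap\im(d)\subseteq\ker(d)\cap\im(\delta)$ I would exploit the star involution $\star\colon V_i\to V_{-i}$, which is a self-map of $V$, together with the two preceding lemmas stating that $\star$ exchanges closed with coclosed elements and exact with coexact elements. Given $v\in\ker(\delta)\cap\im(d)$, i.e. $v$ coclosed and exact, those lemmas show $\star v$ is closed and coexact, so $\star v\in\ker(d)\cap\im(\delta)$. Applying the inclusion proved in the previous paragraph to the element $\star v\in V$ — legitimate precisely because $V$ itself satisfies the hypothesis of the proposition — we get $\star v\in\ker(\delta)\cap\im(d)$, i.e. $\star v$ is coclosed and exact. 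Applying the star lemmas once more to $\star v$ and using $\star\star=\id$, we conclude that $v=\star(\star v)$ is closed and coexact, i.e. $v\in\ker(d)\cap\im(\delta)$, as desired.

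I do not anticipate a serious obstacle here; the only points needing care are (i) noticing that Proposition \ref{KIisII} already delivers one containment, so the actual content is the reverse one, and (ii) checking that pushing forward and pulling back along $\star$ loses no information — which is exactly what the two star lemmas guarantee, with the fact that ``$[e^i]$ is an isomorphism for all $i$'' being a property of $V$ (not of individual elements) being what permits reapplying the first inclusion to $\star v$. An alternative would be to rerun the proof of Proposition \ref{KIisII} with the roles of $d$ and $\delta$ interchanged, decomposing into primitives and invoking injectivity of the relevant power of $f$ on $\delta$-homology; that also works but is longer, and the star shortcut is cleaner.
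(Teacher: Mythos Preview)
Your proposal is correct and follows essentially the same strategy as the paper: obtain the inclusion $\ker(d)\cap\im(\delta)\subseteq\ker(\delta)\cap\im(d)$ from Proposition~\ref{KIisII}, then use the $\star$ involution to transport the reverse inclusion back to the case already handled. The only cosmetic difference is that the paper carries out the star computations explicitly and re-invokes Proposition~\ref{KIisII} on $\star\alpha$ directly, whereas you package those computations into the two star lemmas and apply the already-established first inclusion as a black box; the content is the same.
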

\begin{proof}
 From the preceding proposition, we have that $\ker(d) \cap \im(\delta)=\im(d) \cap \im(\delta) \subset \ker(\delta) \cap \im(d)$, all that remains to be shown is that $\im(d) \cap \ker( \delta) \subset \im(\delta) \cap \ker(d)$. Fix $\alpha \in \im(d) \cap \ker(\delta)$, that is $\alpha = d\beta$ and $\delta \alpha=0$. Then we have 
 \begin{equation*}
     0 = \delta \alpha = \pm \star d \star \alpha.
 \end{equation*}
 Since $\star$ is an isomorphism, we have that $d \star \alpha =0$,i.e. $\star \alpha \in \ker(d)$. Now using exactness we have
 \begin{equation*}
     \star \alpha = \star d \beta = \pm \star d \star \star \beta = \pm \delta \star \beta,
 \end{equation*}
 thus $\star \alpha \in \ker(d) \cap \im(\delta)$. By the preceding proposition yet again we have then that $\star \alpha \in \im(d) \cap \im (\delta)$, and so $\star \alpha = d \rho$. Using this we then have that
 \begin{equation*}
     \alpha = \star \star \alpha = \star d \rho = \star d \star \star \rho  =\delta (\pm \star \rho)
 \end{equation*}
 giving us $\alpha \in \im (\delta)$.
\end{proof}

\begin{prop}\label{SLPisddelta}
    Let $V \in \mathrm{Ob}(\mathscr{C}_n)$, and suppose that $[e]^i:H^{-i}(V,d) \ra H^i(V,d)$ is surjective for all $i$. Then $\im(d) \cap \im (\delta)= \im(d\delta)$.
\end{prop}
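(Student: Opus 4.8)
The plan is to treat the two inclusions separately. That $\im(d\delta)\subseteq\im(d)\cap\im(\delta)$ is formal: $d$ and $\delta$ are odd and $[d,\delta]=0$, so $d\delta=-\delta d$, whence $d\delta w\in\im(d)$ and $d\delta w=-\delta(dw)\in\im(\delta)$. All the content is in the reverse inclusion. The first point is that $\im(d)\subseteq\ker(d)$ and $\im(\delta)\subseteq\ker(\delta)$ — immediate from $d^{2}=\delta^{2}=0$, i.e.\ from $[d,d]=[\delta,\delta]=0$ — so any $\alpha\in\im(d)\cap\im(\delta)$ is already harmonic, and it suffices to show that a harmonic element which is both exact and coexact lies in $\im(d\delta)$.

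To do this I would reduce to primitive vectors, in the spirit of Propositions~\ref{KIisII} and~\ref{KIisIK}. Write $\alpha=\delta\gamma$ with $\alpha$ harmonic and decompose $\gamma=\sum_{m}e^{m}\gamma_{m}$ into primitives. Since $\alpha$ is $d$-closed, Lemma~\ref{e^kdelta} and $d^{2}=0$ give $0=d\alpha=\sum_{m}e^{m}(d\delta\gamma_{m})$; each $d\delta\gamma_{m}$ is again primitive (from $[f,d]=\delta$, $[f,\delta]=0$, so that $f\,d\delta\gamma_{m}=d\delta f\gamma_{m}=0$) and the summands occupy distinct $h$-weights, so uniqueness of the primitive decomposition forces $d\delta\gamma_{m}=0$ for every $m$; hence each $d\gamma_{m}$ and $\delta\gamma_{m}$ is harmonic. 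One then analyses the individual primitive contributions using the refined form of $d$ on a primitive $w$ of weight $-k$, namely $dw=(dw)_{0}+\tfrac{1}{k+1}\,e\,\delta w$ with $(dw)_{0}$ primitive (again from $[f,d]=\delta$), together with $[e,\delta]=d$ to trade the $\im(\delta)$-part against the $\im(d)$-part; at this stage one invokes the hypothesis on $[e^{i}]$ — equivalently, the harmonic representatives supplied by Theorem~\ref{DoubleEquiv}, and its $\star$-dual obtained from $H^{i}(\ker\delta,d)\cong H^{-i}(\ker d,\delta)$ and the star lemmas — to kill the resulting class and land in $d(\im\delta)=\im(d\delta)$. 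The pieces are then reassembled via $e^{m}d\delta=d\delta e^{m}$, which follows from $[e,d]=0$, $d^{2}=0$ and Lemma~\ref{e^kdelta}.

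A cleaner packaging of the same point: a harmonic $\alpha\in\im(d)$ lies in $\ker(\delta)$, so the Lefschetz hypothesis (through Theorem~\ref{DoubleEquiv}) lets one replace $\alpha$ by a coboundary $dx$ with $x\in\ker(\delta)$, and the $\star$-dual statement then produces a harmonic $h$ with $x-h\in\im(\delta)$; since $d(x-h)=\alpha$, writing $x-h=\delta z$ gives $\alpha=d\delta z$. Either way the main obstacle is exactly this last manoeuvre — deducing from the Lefschetz condition that the harmonic, exact, coexact element $\alpha$ is $d\delta$ of something, i.e.\ upgrading "$d$-cohomology classes have harmonic representatives" to the analogue for $\delta$-homology through the $\star$-duality. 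As in the propositions cited above, the extreme weights $\pm n$, where $\delta$ (resp.\ $d$) degenerates on $V_{\mp n}$, will need to be checked separately.
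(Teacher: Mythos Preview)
Your ``cleaner packaging'' has a genuine gap at its very first move. You write that ``the Lefschetz hypothesis (through Theorem~\ref{DoubleEquiv}) lets one replace $\alpha$ by a coboundary $dx$ with $x\in\ker(\delta)$.'' But Theorem~\ref{DoubleEquiv} only gives \emph{surjectivity} of $H^{*}(\ker\delta,d)\to H^{*}(V,d)$; what you are using is \emph{injectivity}: $\alpha\in\im(d)\cap\ker(\delta)$ defines the zero class in $H^{*}(V,d)$, and you assert it is already a boundary inside $(\ker\delta,d)$. That injectivity is, via Theorem~\ref{ddeltaequivrep}, part of the $d\delta$-lemma --- exactly the statement Proposition~\ref{SLPisddelta} is meant to help establish --- so invoking it here is circular. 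Your primitive-decomposition outline is set up correctly (the deduction $d\delta\gamma_{m}=0$ and the harmonicity of $d\gamma_{m}$, $\delta\gamma_{m}$ are fine), but it stops at the decisive point: ``kill the resulting class and land in $d(\im\delta)$'' is a hope, not an argument, and you never say how the Lefschetz surjectivity actually produces the $d\delta$-preimage for a single primitive piece.

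The paper's proof proceeds instead by a downward induction on degree and never needs injectivity of the inclusion. Given $v_{k-1}=d\gamma_{k-2}=\delta\beta_{k}$, it passes \emph{up} one degree to $\xi_{k+1}:=d\beta_{k}$, checks $\xi_{k+1}\in\im(d)\cap\ker(\delta)$, invokes Propositions~\ref{KIisII}--\ref{KIisIK} to get $\xi_{k+1}\in\im(d)\cap\im(\delta)$, and then applies the inductive hypothesis in degree $k+1$ to write $\xi_{k+1}=d\delta\phi_{k+1}$. Now $\beta_{k}-\delta\phi_{k+1}$ is $d$-closed, so the Brylinski surjection (this is the only place Theorem~\ref{DoubleEquiv} enters, and only its surjectivity) supplies a harmonic $\beta_{k}^{0}$ with $\beta_{k}-\delta\phi_{k+1}=\beta_{k}^{0}+d\tau_{k}$; applying $\delta$ gives $v_{k-1}=\delta\beta_{k}=-\delta d\tau_{k}=d\delta\tau_{k}$. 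The idea you are missing is this inductive step-up through $d\beta_{k}$, which converts the problem to one already solved rather than appealing to a quasi-isomorphism you do not yet have.
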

\begin{proof}
    We proceed by induction. First, note that the statement trivially holds elements in $V_n$ or $V_{-n}$ since if $v \in \im(\delta) \cap V_{n}$, then $v=\delta 0=0$, and likewise for $V_{-n}$. Our base case is on $V_{n-1}$, so choose $v_{n-1} \in V_{n-1}$, and let
    \begin{equation*}
        v_{n-1} = d\gamma_{n-2} = \delta w_n.
    \end{equation*}
    Since $dw_n$ is trivially zero it represents a cohomology class, and so we can choose a harmonic representative of said class giving us $w_n = w_n^0 - d \tau_{n-1}$ where $w_n^0$ is harmonic. We then have that
    \begin{align*}
        v_{n-1} = \delta w_n 
        = \delta( w_n^0 - d \tau_{n-1}) 
        =-\delta d \tau_{n-1} 
        = d \delta \tau_{n-1}.
    \end{align*}
    Now, suppose this holds for $V_k$ for some $-n<k<n$, and fix $v_{k-1}\in V_{k-1}\cap \im(d) \cap \im(\delta)$. By definition we have that 
    \begin{equation*}
    v_{k-1}=d \gamma_{k-2} = \delta \beta_k.
    \end{equation*}
    From this, define $\xi_{k+1} = d\beta_k$, and observe that
    \begin{equation*}
        \delta \xi_{k+1} = \delta d \beta_k = -d (\delta \beta_k) = -d^2\gamma_{k-2}=0,
    \end{equation*}
    so $\xi_{k+1}\in \im(d) \cap \ker(\delta)$. We have that $\im(d) \cap \ker(\delta) = \im(d) \cap \im (\delta)$, which by inductive hypothesis extends to $\im(d) \cap \ker(\delta) = \im(d) \cap \im (\delta) = \im (d\delta)$, so $\xi_{k+1}= d \delta \phi_{k+1}$. Note that $d(\beta_k - \delta \phi_{k+1})=0$, so it admits a harmonic representative, i.e. $\beta_k - \delta \phi_{k+1}=\beta_k^0 - d\tau_k$ for some $\tau_k \in V_k$. Applying the definition of $v_{k-1}$, we have 
    \begin{align*}
        v_{k-1} = \delta \beta_k 
        = \delta(\beta_k^0 - d\tau_k + \delta \phi_{k+1}) 
        = -\delta d \tau_k 
        = d \delta \tau_k.
    \end{align*}
\end{proof}
At this point, we have enough to establish the equivalence of the triple: $d\delta$-lemma, strong Lefschetz property, and Brylinski property. We've already proven all the of the relevant pieces, but for clarity state them here in one place.
\begin{theorem}\label{TripleEquiv}
    Let $V \in \mathrm{Ob}(\mathscr{C}_n)$. The the following conditions are equivalent:
    \begin{enumerate}
        \item The $d\delta$-lemma holds for $V$.
        \item The map $[e^i]:H^{-i}(V,d) \ra H^i(V,d)$ is an isomorphism for all $i$.
        \item The inclusion $(\ker(\delta),d) \hookrightarrow (V,d)$ is a quasi-isomorphism.
    \end{enumerate}
\end{theorem}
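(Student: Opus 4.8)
The plan is to prove this theorem by assembling the propositions already established in this subsection, arranging them into a cycle of implications together with the biconditional supplied by Theorem \ref{ddeltaequivrep}; no new computation is needed, only bookkeeping of what each earlier result consumes and produces.

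First, the equivalence $(1) \Leftrightarrow (3)$ is exactly Theorem \ref{ddeltaequivrep}: the $d\delta$-lemma holds for $V$ precisely when the inclusion $(\ker(\delta),d) \hookrightarrow (V,d)$ induces an isomorphism in cohomology, i.e. is a quasi-isomorphism. So it remains only to establish $(1) \Leftrightarrow (2)$. The implication $(1) \Rightarrow (2)$ is Proposition \ref{ddeltaimpSLP}, which shows directly that if the $d\delta$-lemma holds then $[e^k] : H^{-k}(V,d) \to H^k(V,d)$ is an isomorphism for every $k$.

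For $(2) \Rightarrow (1)$, I would observe that assuming $[e^i]$ is an isomorphism (hence in particular surjective) for all $i$, Propositions \ref{KIisII}, \ref{KIisIK}, and \ref{SLPisddelta} together pin down all three subspaces appearing in the definition of the $d\delta$-lemma. Concretely, Proposition \ref{KIisIK} gives $\ker(d) \cap \im(\delta) = \ker(\delta) \cap \im(d)$; Proposition \ref{KIisII} gives $\ker(d) \cap \im(\delta) = \im(d) \cap \im(\delta)$; and Proposition \ref{SLPisddelta} gives $\im(d) \cap \im(\delta) = \im(d\delta)$. Chaining these equalities yields
\[
\im(d) \cap \ker(\delta) \;=\; \im(\delta) \cap \ker(d) \;=\; \im(d) \cap \im(\delta) \;=\; \im(d\delta),
\]
which is precisely the $d\delta$-lemma for $V$. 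This closes the loop $(1) \Rightarrow (2) \Rightarrow (1)$ and, with $(1) \Leftrightarrow (3)$, proves all three conditions equivalent.

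The only thing requiring care — this is a matter of organization rather than a genuine obstacle, since all the substantive arguments live in the earlier results — is tracking the exact hypotheses: Proposition \ref{SLPisddelta} consumes only surjectivity of the Lefschetz maps, whereas Propositions \ref{KIisII} and \ref{KIisIK} genuinely use the full isomorphism in $(2)$, and Theorem \ref{ddeltaequivrep} establishes $(1) \Leftrightarrow (3)$ independently of $(2)$. Once these dependencies are laid out as above, the theorem follows immediately.
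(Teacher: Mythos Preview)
Your proposal is correct and follows essentially the same route as the paper: use Theorem \ref{ddeltaequivrep} for $(1)\Leftrightarrow(3)$, Proposition \ref{ddeltaimpSLP} for $(1)\Rightarrow(2)$, and combine Propositions \ref{KIisII}, \ref{KIisIK}, and \ref{SLPisddelta} for $(2)\Rightarrow(1)$. Your explicit chain of equalities and your remarks on which hypotheses each ingredient actually consumes are a welcome clarification beyond what the paper's terse proof records.
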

\begin{proof}
    We have that the first and third conditions are equivalent by Theorem \ref{ddeltaequivrep}. Then we have that the first implies the second by Proposition \ref{ddeltaimpSLP}, and the reverse direction follows from the Proposition \ref{KIisII}, \ref{KIisIK}, and \ref{SLPisddelta}.
\end{proof}
In light of this theorem, we make the following definition:
\begin{definition}
     If $V \in \mathrm{Ob}(\mathscr{C}_n)$ satisfies any of the above equivalent conditions in Theorem \ref{TripleEquiv} we call $V$ a ``Lefschetz Module''.
\end{definition}
\begin{corollary}\label{lefschetzgivessl2coho}
    Let $V \in \mathrm{Ob}(\mathscr{C}_n)$ be a Lefschetz module. Then $H^*(V,d)$ is an $\mathfrak{sl}_2(\R)$-module.
\end{corollary}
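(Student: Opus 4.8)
First I would note that $e$ and $h$ descend to $H^*(V,d)$ with no hypothesis on $V$: since $[e,d]=0$, $e$ preserves $\ker d$ and $\im d$ and induces $[e]\colon H^i(V,d)\to H^{i+2}(V,d)$, while letting $h$ act on $H^i(V,d)$ by the scalar $i$ gives $[h,[e]]=2[e]$. What genuinely uses the Lefschetz hypothesis is producing the third generator: because $[f,d]=\delta\neq 0$, the operator $f$ preserves neither $\ker d$ nor $\im d$, so it does not descend to cohomology directly. My plan is to obtain the $f$-action indirectly, by realizing $H^*(V,d)$ as a quotient of the harmonic submodule $\widehat V$, on which all of $\mathfrak g$ — hence $\mathfrak{sl}_2(\R)$ — already acts by the earlier proposition that $\widehat V$ is a $\mathfrak g$-submodule of $V$.

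Next I would use that, because $V$ is a Lefschetz module, the $d\delta$-lemma holds (Theorem \ref{TripleEquiv}), so every class in $H^i(V,d)$ has a harmonic representative — the observation already used in the proof of Proposition \ref{ddeltaimpSLP}, and also a consequence of $(\ker\delta,d)\hookrightarrow(V,d)$ being a quasi-isomorphism together with the fact that every class of $H^i(\ker\delta,d)$ is represented by an element of $\ker d\cap\ker\delta=\widehat V$. Hence $\pi\colon\widehat V\to H^*(V,d)$, $v\mapsto[v]$, is a surjection of graded vector spaces that intertwines the $e$-action on $\widehat V$ with $[e]$ and the $h$-action with the grading action on $H^*(V,d)$. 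It then suffices to check that $\ker\pi$ is a $\mathfrak g$-submodule of $\widehat V$, for then $H^*(V,d)\cong\widehat V/\ker\pi$ inherits a $\mathfrak g$-module structure, in particular an $\mathfrak{sl}_2(\R)$-module structure with $e,h$ acting as $[e]$ and the grading.

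To finish I would identify $\ker\pi=\widehat V\cap\im d=\ker\delta\cap\im d$ (the $\ker d$ condition being automatic on $\im d$), which by the $d\delta$-lemma equals $\im(d\delta)$; this already lies in $\widehat V$, being killed by $d$ and, since $\delta d=-d\delta$ and $\delta^2=0$, by $\delta$. It is stable under $d$, $\delta$ and the grading $h$ trivially, and the bracket relations give $e(d\delta u)=d\delta(eu)$ (using $ed=de$, $e\delta=\delta e+d$, $d^2=0$) and $f(d\delta u)=d\delta(fu)$ (using $fd=df+\delta$, $f\delta=\delta f$, $\delta^2=0$). The hard part — really the only place anything subtle happens — is this last identity, since it is exactly where the failure of $f$ to commute with $d$ gets absorbed by the $d\delta$-lemma's identification of $\ker\pi$; once it is in place the conclusion is immediate. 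As an aside, one could instead build the $\mathfrak{sl}_2(\R)$-action purely formally, since any graded vector space with bounded $h$-spectrum carrying a degree-$2$ operator satisfying the hard Lefschetz isomorphisms automatically admits such an action; but routing through $\widehat V$ stays inside the machinery already set up.
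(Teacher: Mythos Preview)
Your proof is correct and is essentially the paper's argument in cleaner packaging: the paper defines $f$ on $H^*(V,d)$ via harmonic representatives and checks well-definedness by writing $f(d\phi)=df\phi+\delta\phi$ and using the $d\delta$-lemma to see that $\delta\phi$ is exact, while you phrase the same computation as showing that $\ker\pi=\im(d\delta)$ is an $\mathfrak{sl}_2$-submodule of $\widehat V$ so that $H^*(V,d)\cong\widehat V/\im(d\delta)$ inherits the action. Your identity $f(d\delta u)=d\delta(fu)$ is precisely the stability check underlying the paper's well-definedness argument, just stated more invariantly.
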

\begin{proof}
    From the $\mathfrak{g}$ relations we have that $e$ and $h$ always pass to cohomology, i.e. $H^*(V,d)$ is always a $B$-module, independent of whether or not $V$ is Lefschetz. To define the action of $f$, fix $[v] \in H^*(V,d)$, and let $\hat{v}$ be a harmonic representative of this class, which always exists as $V$ is Lefschetz. We can then define
    \begin{equation*}
        f[v] := [f(\hat{v})].
    \end{equation*}
    To see this is well-defined, suppose that $\hat{w}$ is another harmonic representative of the class, that is $\hat{w}=\hat{v}+d\phi$. Then we have that
    \begin{equation*}
        f(\hat{w}) = f(\hat{v}) + fd\phi = f(\hat{v}) + df\phi + \delta \phi,
    \end{equation*}
    where we use that $[f,d]=\delta$. Note that
    \begin{equation*}
        d\delta \phi = -\delta d \phi = -\delta ( \hat{w}-\hat{v})=0
    \end{equation*}
    by harmonicity, and so $ \delta \phi \in \im(\delta) \cap \ker(d)$. Again using that $V$ is Lefschetz, the $d\delta$-lemma implies that $\delta \phi = d\delta \varphi$ and so we have that
    \begin{equation*}
        f(\hat{w}) = f(\hat{v}) + d(f\phi + \delta \varphi).
    \end{equation*}
\end{proof}

\subsection{Weakly Lefschetz Modules}\label{Weak}
There are examples on manifolds where the Lefschetz condition is only partially met, i.e. the maps $[L]^k$ are  isomorphisms for some $k$, but not all $ 0 \leq k \leq n$.
\begin{definition}
    Fix $V \in \mathrm{Ob}(\mathscr{C}_n)$, $0 \leq s \leq n$, and consider the ordered set
    \begin{equation*}
        \{[e^n], [e^{n-1}], \dots [e^1] \},
    \end{equation*}
    where $[e^i]:H^{-i}(V,d) \ra H^i(V,d)$. We say that $V$ is ``$s$-Lefschetz'' if the first $s$ maps in this set are isomorphisms. In particular, $n$-Lefschetz is Lefschetz.
\end{definition}
The same theorems from before hold in this setting as well, but only on the portions on the modules for which the maps $e^k$ are isomorphisms. 
\begin{theorem}
      Suppose $V \in \mathrm{Ob}(\mathscr{C}_n)$ and fix $0 \leq s \leq n$. Then the following statements are equivalent:
    \begin{enumerate}
        \item The inclusion $(\ker(\delta),d) \hookrightarrow (V,d)$ induces a surjection in cohomology $H^i(\ker(\delta),d) \ra H^i(V,d)$ in degrees $i \in \{-n, \dots-s\} \cup \{s ,\dots, n\}$.
        \item The maps $[e^i]:H^{-i}(V,d) \ra H^i(V,d)$ for $i \in  \{n, \dots ,s\}$ are surjections.
    \end{enumerate}
\end{theorem}
\begin{proof}
    The forward direction follows in the same exact way it does in the proof of Theorem \ref{DoubleEquiv}. The reverse direction followed by induction that crucially used the fact that we could decompose $H^{-i}(V,d)$ for $i>0$. This decomposition was the only part of the proof that relied on the reverse direction, and the rest of the proof holds if we only assume that the $[e^i]$ are surjective from $V_{-n}$ up to $V_{-s}$.
\end{proof}
The upgraded equivalence also holds under the added assumption that $e^i$ is in fact an isomorphism.
\begin{definition}
    Given a module $V \in \mathrm{Ob}(\mathscr{C}_n)$ and $1 \leq s \leq n$, we say that $V$ has the ``$d\delta$-lemma up to degree $s$'' if 
    \begin{align*}
        \im(d) \cap \ker(\delta)&=\im (d \delta)= \im(\delta) \cap \ker(\delta) & \text{on } &V_i \text{ for } -n \leq i \leq -s \\
        \im(d) \cap \ker(\delta) &= \im(d\delta) & \text{on }& V_{-s+1}.
    \end{align*}
\end{definition}
\begin{remark}
    Note that by $\star$ duality if this equality holds on $V_i$ for $-n \leq i \leq -s$ it then also holds in $V_i$ for $s \leq i \leq n$. From this observation we'll only prove statements about it in non-positive degree, as the duality will extend the results to positive degrees.
\end{remark}

\begin{theorem} \label{WeakTheorem}
    Let $V \in \mathrm{Ob}(\mathscr{C}_n)$, and fix $1 \leq s \leq n$. Then the following are equivalent:
    \begin{enumerate}
        \item The $d\delta$-lemma holds for $V$ up to degree $s$.
        \item $V$ is $s$-Lefschetz
        \item The inclusion $(\ker(\delta),d) \hookrightarrow (V,d)$ is a quasi-isomorphism in degrees $i \in \{-n, \dots,-s\} \cup \{s ,\dots, n\}$.
    \end{enumerate}
\end{theorem}
\begin{proof}
The proof follows similarly to the original. The first and third condition are equivalent by \cite{manin1998constructions}, whose argument works in each degree. The first implies the second using Proposition \ref{ddeltaimpSLP} and acknowledging the proof is symmetric in it's degree arguments about the zero weight. Finally, to see that the second condition implies the first, note that proposition \ref{KIisII} and proposition \ref{KIisIK} are degree agnostic, and proposition \ref{SLPisddelta} is done by induction, which we can once again stop early when the hypothesis fail in degree $-s$ and achieve the result.
\end{proof}
\subsection{Primitive Equivalence}
Much of this section is rephrasing of work in Chung-I Ho's thesis \cite{HoSym} and Tseng's paper \cite{tseng2012cohomologyhodgetheorysymplectic}.

\begin{definition}
Suppose $V \in \mathrm{Ob}(\mathscr{C}_n)$ with decomposition $\bigoplus_k V_k$. Then the space of primitive elements of degree $k$ will be denoted  $\mathcal{P}_k = \ker(f) \cap V_k$.
\end{definition}
Throughout the rest of this subsection, fix $V \in \mathrm{Ob}(\mathscr{C}_n)$, so that we may speak of elements $v_i \in \Prim_i$ freely. Recall that for any $v_i \in \Prim_i$, we have that
\begin{equation*}
    dv_i = \dee v_i + e\deebar v_i.
\end{equation*}
These extend to the entire module by taking primitive decomposition of any element $v \in V$, i.e. if $v=\sum v_i$ for $v_i \in V_i$, and $v_i= \sum_{k \geq 0}e^k v_{i-2k}$ then we have that
\begin{align*}
    \dee v &= \sum_i\sum_{k \geq 0} e^k \dee v_{i-2k}, \\
     \deebar v &= \sum_i \sum_{k \geq 0} e^k \deebar v_{i-2k}.
\end{align*}

\begin{lemma}
    We have that $[\dee,e]=[\deebar,e]=[\dee,\dee]=[\deebar,\deebar]=0$ where all are graded commutators, as well as
    \begin{equation*}
        e(\dee \deebar) = - e (\deebar \dee)
    \end{equation*}
      on $V_{n-1},V_n$ and $[\dee,\deebar]=0$ on $V_k$ for $k < n-1$.
\end{lemma}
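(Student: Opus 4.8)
The plan is to derive every relation in the statement from a single ``master identity,'' obtained by expanding $d^{2}=0$ on a primitive element and then invoking uniqueness of the primitive decomposition, together with the relation $[e,d]=0$ built into $\mathfrak g$. Throughout, the key structural input is that $\dee,\deebar$ are defined on all of $V$ by applying them componentwise to the primitive decomposition, and that $\dee$ and $\deebar$ carry primitive elements to primitive elements (shifting weight by $+1$ and $-1$), so that the primitive decomposition is compatible with iterating them.

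I would first dispatch $[\dee,e]=[\deebar,e]=0$, which are the plain commutators $\dee e-e\dee$ and $\deebar e-e\deebar$ since $\dee,\deebar$ are odd while $e$ is even. For $v\in V$ with primitive decomposition $v=\sum_{i}\sum_{k\ge 0}e^{k}u_{i-2k}$, $u_{i-2k}\in\Prim_{i-2k}$, the element $ev$ has primitive decomposition $\sum_{i}\sum_{k\ge 0}e^{k+1}u_{i-2k}$: the primitive components are literally unchanged, only the powers of $e$ are shifted. Applying the definition of $\dee$ term by term then gives $\dee(ev)=\sum e^{k+1}\dee u_{i-2k}=e\,\dee v$, and similarly for $\deebar$. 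This is pure bookkeeping.

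The heart of the argument is the master identity. For $v_i\in\Prim_i$ the two-term lemma gives $dv_i=\dee v_i+e\,\deebar v_i$ with $\dee v_i\in\Prim_{i+1}$ and $\deebar v_i\in\Prim_{i-1}$. Applying $d$ again, using $de=ed$ to pull $e$ past $d$ and applying the two-term expansion once more to the primitive elements $\dee v_i$ and $\deebar v_i$, one obtains
\[
0=d^{2}v_i=\dee^{2}v_i+e\bigl(\dee\deebar+\deebar\dee\bigr)v_i+e^{2}\deebar^{2}v_i .
\]
The three summands lie in $\Prim_{i+2}$, in $e\cdot\Prim_{i}$, and in $e^{2}\cdot\Prim_{i-2}$, which are distinct slots of the primitive decomposition of the element $d^{2}v_i=0$ of weight $i+2$, so by uniqueness each vanishes separately:
\[
\dee^{2}v_i=0,\qquad e(\dee\deebar+\deebar\dee)v_i=0,\qquad e^{2}\deebar^{2}v_i=0 .
\]
Since primitive weights are nonpositive, $i-2\le-2$ and the relevant $\mathfrak{sl}_{2}(\R)$-string is long enough that $e^{2}$ is injective on $\Prim_{i-2}$; hence the last equation upgrades to $\deebar^{2}v_i=0$. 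Thus $\dee^{2}=\deebar^{2}=0$ on primitives, and extending componentwise through the primitive decomposition (legitimate by the first step, as $\dee,\deebar$ commute with $e$) gives $[\dee,\dee]=[\deebar,\deebar]=0$ on all of $V$. The middle equation extends the same way to $e(\dee\deebar+\deebar\dee)=0$ on all of $V$, i.e.\ $e[\dee,\deebar]=0$; in particular this is the asserted identity $e(\dee\deebar)=-e(\deebar\dee)$ on $V_{n-1},V_{n}$.

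Finally, to promote $e[\dee,\deebar]=0$ to $[\dee,\deebar]=0$ one cancels the leading $e$. On $\Prim_{j}$ with $j<0$ the map $e$ is injective, so $e(\dee\deebar+\deebar\dee)v_{j}=0$ forces $(\dee\deebar+\deebar\dee)v_{j}=0$ there; pushing the primitive decomposition of an element of $V_{k}$ through $\dee\deebar+\deebar\dee$ componentwise then yields $[\dee,\deebar]=0$ on $V_{k}$ in the stated range $k<n-1$, the only obstruction being the ``top'' primitive space (the weight that $\dee$ maps out of range), on which $e$ is not injective — there $\deebar\dee$ already drops out because $\dee$ degenerates, but $\dee\deebar$ need not vanish, leaving only the $e$-multiplied identity. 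I expect this last step to be the only real subtlety: keeping careful track of which $\Prim_{j}$ enter the decomposition of each $V_{k}$, and of how $\dee,\deebar$ behave at the extreme weights (one of them being zero there for degree reasons), is exactly what produces the split between the full relation for $k<n-1$ and the weaker $e$-relation on $V_{n-1},V_{n}$. Everything else is formal manipulation inside a single graded vector space together with uniqueness of the primitive decomposition.
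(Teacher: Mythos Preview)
Your proposal is correct and is exactly the argument the paper defers to: the paper's own proof is nothing more than a citation of Tseng--Yau, Lemma 2.5, and your ``master identity'' obtained by expanding $d^{2}=0$ on a primitive element, followed by separation via uniqueness of the primitive (Lefschetz) decomposition, is precisely that argument translated into the abstract module language. Your handling of $[\dee,e]=[\deebar,e]=0$ directly from the definition, and the injectivity of $e$ (resp.\ $e^{2}$) on negative-weight primitive spaces to cancel the leading $e$'s, are the right mechanisms.

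One small remark on the last step you flag as subtle: your argument actually yields a bit more than the stated range, since $e$ is already injective on $\Prim_{-1}$ (the string there has length two), so $[\dee,\deebar]$ vanishes on every primitive space except $\Prim_{0}$; the phrasing ``$V_{n-1},V_{n}$'' in the lemma is inherited from Tseng--Yau's form-degree indexing (primitive $k$-forms on a $2n$-manifold with $k=n-1,n$) rather than the $h$-weight indexing used elsewhere in the paper. So the vagueness you noticed in matching the exact range is an artifact of the statement, not of your proof.
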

\begin{proof}
    This follows the same as it did in \cite[Lemma 2.5]{tseng2012cohomologyhodgetheorysymplectic}.
\end{proof}
\begin{prop}
    For $v_i \in \Prim_i$, we have that $f \dee v_i=0$.
\end{prop}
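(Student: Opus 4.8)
The plan is to read the statement off the preceding Lemma together with the definition of $\dee$. For $v_i \in \Prim_i$ that Lemma writes the primitive decomposition of $dv_i$ as the two-term sum $dv_i = v_{i+1} + e\,v_{i-1}$ with $v_{i+1}\in\Prim_{i+1}$ and $v_{i-1}\in\Prim_{i-1}$, and by definition $\dee v_i$ is exactly the term $v_{i+1}$. Since $\Prim_{i+1}=\ker(f)\cap V_{i+1}$, this says precisely that $f\dee v_i=0$. So at bottom the proposition is a repackaging of the two-term decomposition, and no new idea is needed.

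To make the mechanism explicit I would also record the following short computation. Apply $f$ to the identity $dv_i=\dee v_i+e\,\deebar v_i$. Using $fv_i=0$ and $[f,d]=\delta$ gives $f\,dv_i=dfv_i+\delta v_i=\delta v_i$ on the left, while on the right $\deebar v_i\in\Prim_{i-1}$ gives $f\,\deebar v_i=0$, hence $fe\,\deebar v_i=(ef-h)\deebar v_i=(1-i)\,\deebar v_i$. Comparing,
\[
 f\dee v_i=\delta v_i-(1-i)\,\deebar v_i .
\]
Because $[f,\delta]=0$ we have $\delta v_i\in\ker(f)\cap V_{i-1}=\Prim_{i-1}$, so both summands on the right already lie in $\Prim_{i-1}$; combined with $\dee v_i\in\Prim_{i+1}$ coming from the Lemma, this also yields the useful by-product $\delta v_i=(1-i)\,\deebar v_i$ on primitive elements.

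I do not expect a genuine obstacle here: the content is bookkeeping around the two-term primitive decomposition. The only points that need care are the graded-sign conventions when commuting $f$ past $d$ and $e$ (recall $f$ is even and $d$ is odd, so $[f,d]=fd-df=\delta$), and the degenerate extreme-degree cases, where for $i$ near $\pm n$ one or both of $\dee v_i,\deebar v_i$ vanish for degree reasons; these are covered automatically since the zero vector is primitive.
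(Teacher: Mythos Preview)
Your argument is correct and, in fact, more direct than the paper's own proof. You simply unpack the definition: the preceding Lemma exhibits the primitive decomposition $dv_i = v_{i+1} + e\,v_{i-1}$ with $v_{i+1}\in\Prim_{i+1}$ and $v_{i-1}\in\Prim_{i-1}$, and since $\dee v_i$ is \emph{defined} to be $v_{i+1}$, the conclusion $f\dee v_i=0$ is immediate from $\Prim_{i+1}=\ker(f)\cap V_{i+1}$. The paper instead argues via the graded Jacobi identity for $h,\dee,f$: it deduces that $[h,[\dee,f]]$ must vanish, then compares the $h$-weight of $[\dee,f]v_i$ computed two ways (as $(i-1)$ from degree count versus $i$ from the commutation) to force $[\dee,f]v_i=0$. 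Your route is shorter and makes transparent that the proposition is really a restatement of the two-term Lemma; the paper's route has the modest advantage of not explicitly invoking the primitive decomposition of $dv_i$ again, treating $\dee$ more as an abstract operator, but since $\dee$ is defined through that decomposition anyway this is not a real gain. Your ``by-product'' $\delta v_i=(1-i)\,\deebar v_i$ is exactly the next proposition in the paper, and your derivation of it is essentially the same as theirs.
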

\begin{proof}
    Since $v_i$ is primitive, we have that $[f,\dee]v_i = f \dee v_i$. By the Jacobi identity we have that 
    \begin{equation*}
        [f,\dee] = \frac{1}{2}[h,[\dee ,f]] + [f, \dee],
    \end{equation*}
    thus we have to have that $h$ and $[\dee,f]$ commute. Checking this on $v_i$ gives us that
    \begin{equation*}
        (i-1)[\dee,f]v_i = i[\dee,f]v_i,
    \end{equation*}
    forcing $[f,\dee]v_i=0$.
\end{proof}
\begin{prop}
 For $v_i \in \Prim_i$ $\delta v_i = (1-i)\deebar v_i$.
\end{prop}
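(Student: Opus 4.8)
The plan is to compute $\delta v_i$ directly by writing $\delta$ as a commutator of $f$ and $d$ and then using the two facts already at our disposal: that $f\dee v_i = 0$ (the preceding proposition) and that $\deebar v_i = v_{i-1}$ is itself primitive of degree $i-1$ (this is built into the primitive decomposition $dv_i = \dee v_i + e\deebar v_i$ established two lemmas above). First I would record that the relation $[f,d]=\delta$, together with the fact that $f$ and $d$ have degrees $-2$ and $1$ so that the graded commutator coincides with the ordinary one, gives $\delta = fd - df$. Applying this to the primitive vector $v_i$ and using $fv_i=0$ collapses the expression to $\delta v_i = f(d v_i)$.

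Next I would substitute $dv_i = \dee v_i + e\,\deebar v_i$, obtaining $\delta v_i = f\dee v_i + fe\,\deebar v_i$. The first term vanishes by the immediately preceding proposition. For the second term I would use $[e,f]=h$, i.e.\ $fe = ef - h$, so that $fe\,\deebar v_i = e(f\,\deebar v_i) - h\,\deebar v_i$. Since $\deebar v_i$ is primitive we have $f\,\deebar v_i = 0$, and since $\deebar v_i$ lies in $V_{i-1}$ the element $h$ acts on it as multiplication by $i-1$. Hence $fe\,\deebar v_i = -(i-1)\deebar v_i = (1-i)\deebar v_i$, which yields $\delta v_i = (1-i)\deebar v_i$ as claimed.

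There is no real obstacle here: the statement is a short bracket computation once the structural inputs are in place. The only points requiring a line of care are the sign bookkeeping in passing from the graded commutator $[f,d]$ to $fd-df$ (harmless since the degree product is even) and the invocation that $\deebar v_i$ is primitive of the correct degree, which is exactly the content of the primitive decomposition lemma for $dv_i$ rather than something needing new argument. I would therefore present the computation as a single short displayed chain of equalities, citing the preceding proposition for $f\dee v_i = 0$ and the primitive-decomposition lemma for the form of $dv_i$.
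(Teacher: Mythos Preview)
Your proposal is correct and follows exactly the paper's own argument: compute $\delta v_i = fdv_i - dfv_i = fdv_i = f\dee v_i + fe\,\deebar v_i$, kill the first summand via the preceding proposition, and reduce the second via $fe = ef - h$ together with primitivity of $\deebar v_i$ and the $h$-eigenvalue. Your extra remarks on the sign in the graded commutator and on why $\deebar v_i$ is primitive are welcome clarifications but do not change the route.
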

\begin{proof}
    Using the definition of $\delta$ we have
    \begin{equation*}
        \delta v_i = fdv_i - dfv_i = fdv_i = f\dee v_i + fe\deebar v_i.
    \end{equation*}
    We know the first summand vanishes by the preceding lemma, and on the second we have that
    \begin{equation*}
        fe\deebar v_i = (ef - h)\deebar v_i = -h\deebar v_i = -(i-1) \deebar v_i.
    \end{equation*}
\end{proof}
\begin{lemma}
    On $\Prim_i$, we have that $d \delta = (1-i) \dee \deebar$.
\end{lemma}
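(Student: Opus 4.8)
The plan is to derive the identity directly from the two facts already established for primitive elements: the identity $\delta v_i = (1-i)\deebar v_i$ on $\Prim_i$, and the two-term expansion $dw = \dee w + e\deebar w$ valid for any primitive $w$ (with $\dee w$ and $\deebar w$ again primitive), together with the vanishing $\deebar^2 = 0$, which follows from the graded relation $[\deebar,\deebar]=0$ and the oddness of $\deebar$.

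First I would fix $v_i \in \Prim_i$ and apply $d$ to the relation $\delta v_i = (1-i)\deebar v_i$, obtaining $d\delta v_i = (1-i)\,d(\deebar v_i)$. The key observation is that $\deebar v_i$ is itself a primitive element, of degree $i-1$: this is precisely the content of the two-term lemma, in which $\deebar v_i$ is by definition the primitive component of $dv_i$ sitting in degree $i-1$. I may therefore apply the two-term expansion to $w = \deebar v_i \in \Prim_{i-1}$, which gives $d(\deebar v_i) = \dee(\deebar v_i) + e\,\deebar(\deebar v_i) = \dee\deebar v_i + e\,\deebar^2 v_i$.

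Then I would invoke $\deebar^2 = 0$ to discard the last term, concluding $d\delta v_i = (1-i)\,\dee\deebar v_i$ on all of $\Prim_i$, as claimed. Since every element of $\Prim_i$ is treated identically, this establishes the operator identity on $\Prim_i$.

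There is no genuine obstacle here; the only points requiring care are bookkeeping. One must be sure the two-term expansion is applied to an honestly primitive vector — it is, because the definition of $\deebar$ extracts a primitive summand — and that $\deebar^2 = 0$ holds with no restriction on degree, which it does since $[\deebar,\deebar]=0$ is among the relations asserted on all of $V$, unlike the more delicate $[\dee,\deebar]$ relation that is only claimed below the top weights. The boundary cases (for instance $i = -n$, where $\deebar v_i$ and hence both sides vanish outright) are automatically consistent and need no separate treatment.
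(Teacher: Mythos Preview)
Your proof is correct and follows essentially the same route as the paper: apply $d$ to the identity $\delta v_i = (1-i)\deebar v_i$, expand $d(\deebar v_i)$ via the two-term decomposition $\dee + e\deebar$, and kill the second term with $\deebar^2 = 0$. The paper's argument is more terse (a single displayed line) but identical in substance.
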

\begin{proof}
    Using the preceding lemmas we have
    \begin{equation*}
        d \delta v_i = (i-1)d \deebar v_i = (i-1)(\dee + e \deebar)\deebar v_i = (1-i)\dee \deebar v_i.
    \end{equation*}
\end{proof}
\begin{definition}
    We say that $v \in V$ is ``symphonic'' if $\dee v = \deebar v =0$.
\end{definition}
\begin{remark}
This was previously called ``primitive harmonic'' in the literature, see \cite{HoSym}, however to avoid the phrase ``harmonic primitive harmonic'' we introduce this terminology.
\end{remark}
\begin{lemma}
    $v \in V$ is harmonic if and only if each term of it's primitive decomposition is symphonic.
\end{lemma}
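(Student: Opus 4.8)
The plan is to avoid computing with a general element directly — expanding $dv$ strand by strand couples adjacent primitive strands and leads to a fiddly induction over the weight range — and instead to exploit that the harmonic submodule $\widehat{V}$ is already a $\mathfrak{g}$-submodule of $V$. This lets one test membership in $\widehat{V}$ one primitive strand at a time, after which the statement reduces to a one-line computation on a single primitive vector.

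\emph{Step 1 (reduction to primitive vectors).} Recall the primitive decomposition: every $v$ has a unique expression $v = \sum_{k} e^{k} v_{k}$ with $v_{k}$ primitive, coming from the direct sum $V_i = \bigoplus_{k\geq 0} e^{k}\,\Prim_{i-2k}$ (classical $\mathfrak{sl}_2(\R)$-theory, using that every cyclic object of $\mathscr{C}_n$ is finite dimensional, hence semisimple). Since $\widehat{V}$ is a $\mathfrak{g}$-submodule it is, a fortiori, stable under $e$ and $f$; hence for a primitive $w$ with $e^{k}w \neq 0$ one has $e^{k}w \in \widehat{V} \iff w \in \widehat{V}$ — the forward implication by applying $f^{k}$, since $f^{k}e^{k}$ acts on $w$ as a nonzero scalar. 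Comparing the decomposition of $v$ with the ambient direct sum, $v = \sum_{k} e^{k} v_{k} \in \widehat{V}$ if and only if every $v_{k} \in \widehat{V}$. So it suffices to prove: a primitive vector $v_{k} \in \Prim_{j}$ is harmonic if and only if it is symphonic.

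\emph{Step 2 (the primitive case).} We may assume $v_{k} \neq 0$, so $j \leq 0$ (there are no nonzero primitives of positive weight). By the two-term lemma, $d v_{k} = \dee v_{k} + e\,\deebar v_{k}$, where $\dee v_{k} \in \Prim_{j+1}$ and $e\,\deebar v_{k} \in e\,\Prim_{j-1}$ are the $k=0$ and $k=1$ summands of the primitive decomposition of $V_{j+1}$. On $\Prim_{j-1}$ the map $e$ is injective (because $j \leq 0$), and if $j = 0$ the first summand $\Prim_{1}$ vanishes so $\dee v_{k} = 0$ automatically; in either case $d v_{k} = 0$ if and only if $\dee v_{k} = \deebar v_{k} = 0$. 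Moreover $\delta v_{k} = (1-j)\,\deebar v_{k}$ with $1-j \neq 0$, so the condition $d v_{k}=0$ already forces $\delta v_{k}=0$. Hence $v_{k}$ is harmonic precisely when $\dee v_{k} = \deebar v_{k} = 0$, i.e. precisely when it is symphonic.

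\emph{Conclusion and the main difficulty.} Combining the two steps, $v$ is harmonic if and only if every primitive term $v_{k}$ of its primitive decomposition is symphonic. The only points needing care are the extreme weights: when $j=0$ the summand $\Prim_{1}$ is absent, and when $j=-n$ the two-term decomposition of $d v_{k}$ degenerates because $e\,\Prim_{-n-1}=0$ — but in each case the vanishing summand merely simplifies the identity $d v_{k} = \dee v_{k} + e\,\deebar v_{k}$, so Step 2 is unaffected. I expect the load-bearing move to be the first one: passing to $\widehat{V}$ as a submodule is what decouples the primitive strands and replaces an induction with the displayed two-line computation; without it one is forced into the coupled system obtained by expanding $dv$ and $\delta v$ strand by strand, where the cancellation that makes the ``only if'' direction work rests on the inequality $2-j > 0$ for the weight $j \le 0$ of a nonzero primitive.
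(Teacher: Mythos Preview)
Your proof is correct and follows essentially the same path as the paper's: both reduce to the primitive case via the $\mathfrak{g}$-submodule property of $\widehat{V}$ (your Step~1 makes this reduction more explicit than the paper, which simply asserts that each primitive term of a harmonic element is again harmonic) and then verify that a primitive vector is harmonic iff symphonic. In Step~2 you read off $\dee v_k=\deebar v_k=0$ directly from the direct-sum decomposition of $dv_k$, whereas the paper first uses $\delta v_k=(1-j)\deebar v_k=0$ to kill $\deebar v_k$ and then reads $\dee v_k=0$ from $dv_k=0$; both are one-line computations, so this is a cosmetic difference rather than a genuinely different route.
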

\begin{proof}
    Let $v = v_i$ with $v_i = \sum_{k \geq 0} e^k w_{i-2k}$ for $w_{i-2k} \in \Prim_{i-2k}$. Then if each $w_{i-2k}$ is symphonic, since $d =\dee  + e\deebar$ it's clear that $v$ is closed. To see $v$ is coclosed, observe
    \begin{align*}
        \delta v &= \sum_i \sum_{k \geq 0}\delta e^k w_{i-2k} \\
                 &= \sum_i \sum_{k \geq 0} (e^k\delta - ke^{k-1}d)w_{i-2k} \\
                 &= \sum_i \sum_{k \geq 0} e^k (1-i)\deebar w_{i-2k} \\
                 &=0.
    \end{align*}
    Now, suppose that $v =\sum_i v_i$ for $v_i \in V_i$ is harmonic. This occurs if and only if each $v_i$ is harmonic, which occurs if and only if the term in the primitive decomposition of each $v_i$ is harmonic, so it suffices to show this for a primitive harmonic element. Towards this, suppose $v \in \widehat{\Prim_i}$. Then
    \begin{equation*}
        0=\delta v = (i-1) \deebar v,
    \end{equation*}
    so $v \in \ker(\deebar)$. Moreover we have
    \begin{equation*}
        0 = dv = \dee v + e\deebar v = \dee v,
    \end{equation*}
    thus $v \in \ker( \dee)$.
\end{proof}
\begin{definition}
    Let $V \in \mathrm{Ob}(\mathscr{C}_n)$. We say that $V$ satisfies the $\dee \deebar$-lemma if the following equalities hold
    \begin{align*}
        \im(\dee) \cap \ker(\deebar) \cap \Prim_k &= \im(\dee\deebar) \cap \Prim_k \qquad -n+1 \leq k \leq -1, \\
        \im(\dee) \cap \ker(\deebar) \cap \Prim_{-n}&= \im(\dee \deebar)\cap \Prim_{-n}, \\
        \im(\deebar) \cap \ker(\dee) \cap \Prim_0 &= \im(\dee \deebar) \cap \Prim_0.
    \end{align*}
\end{definition}
\begin{lemma}\label{deedeebarimpliesddelta}
    The $d\delta$-lemma holds for $V \in \mathrm{Ob}(\mathscr{C}_n)$ if the $\dee\deebar$-lemma does.
\end{lemma}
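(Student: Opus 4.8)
The plan is to transport the statement through the Lefschetz (primitive) decomposition and reduce it, one primitive degree at a time, to the $\dee\deebar$-lemma that we are assuming. First I would trim the statement. Since $d^{2}=\delta^{2}=0$ and $d\delta=-\delta d$, one always has $\im(d\delta)\subseteq(\im(d)\cap\ker(\delta))\cap(\im(\delta)\cap\ker(d))$, so the $d\delta$-lemma for $V$ is exactly the pair of inclusions $\im(d)\cap\ker(\delta)\subseteq\im(d\delta)$ and $\im(\delta)\cap\ker(d)\subseteq\im(d\delta)$; and since $\star$ interchanges closed with coclosed and exact with coexact (the lemmas of Section 2) while carrying $d\delta\rho$ to $\pm\, d\delta(\star\rho)$, it swaps these two inclusions. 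Hence it suffices to prove $\im(d)\cap\ker(\delta)\subseteq\im(d\delta)$, and because $d,\delta$ are homogeneous for the $h$-grading we may take $\alpha\in\im(d)\cap\ker(\delta)$ homogeneous of some weight $j$; then in its primitive decomposition $\alpha=\sum_{k\ge 0}e^{k}a_{j-2k}$ every component $a_{j-2k}$ lies in $\Prim_{j-2k}$ with $j-2k\le 0$, so only the non-positive primitive degrees, where the $\dee\deebar$-lemma is stated, are ever used.

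The engine is the identity $d\delta=(1-i)\dee\deebar$ on $\Prim_i$ together with $[e,d\delta]=[e,d]\delta+d[e,\delta]=d^{2}=0$: since $d\delta$ commutes with $e$ and $1-i\ge 1\neq 0$ for $i\le 0$, a homogeneous element lies in $\im(d\delta)$ if and only if each of its primitive components lies in $\im(\dee\deebar)$. So the goal becomes: every primitive component $a_{m}$ of $\alpha$ lies in $\im(\dee\deebar)\cap\Prim_{m}$. To analyze the components, write $\alpha=d\beta$, decompose $\beta=\sum_{l\ge 0}e^{l}b_{j-1-2l}$ into primitives, and compare $e^{k}$-coefficients using $d(e^{l}b)=e^{l}(\dee b+e\,\deebar b)$; this gives $a_{j-2k}=\dee b_{j-1-2k}+\deebar b_{j+1-2k}$. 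From $\delta\alpha=0$ and $\alpha=d\beta$ we get $d\delta\beta=-\delta d\beta=0$, and the same commutation argument forces $\dee\deebar b_{\ell}=0$ for every primitive component $b_{\ell}$ of $\beta$; using $\dee\deebar=-\deebar\dee$ in primitive degrees $<n-1$ (the cases $n\le 1$ being trivial), this says precisely that $\dee b_{j-1-2k}$ is $\deebar$-closed and $\deebar b_{j+1-2k}$ is $\dee$-closed. Now I would feed this into the $\dee\deebar$-lemma: in each primitive degree $m$ the piece $\dee b_{m-1}$ is $\dee$-exact and $\deebar$-closed and the piece $\deebar b_{m+1}$ is $\deebar$-exact and $\dee$-closed, so the relevant clause of the $\dee\deebar$-lemma (the generic clause for $-n<m<0$, together with the special clauses at $m=-n$ and $m=0$, where one of $\dee,\deebar$ degenerates) places each piece in $\im(\dee\deebar)$, whence $a_{m}\in\im(\dee\deebar)\cap\Prim_m$. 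Reassembling the components (dividing by the nonzero scalars $1-m$) produces $\rho$ with $d\delta\rho=\alpha$.

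The step I expect to be the real obstacle is this last matching. One must check that in each primitive degree that actually occurs the contributing piece is covered by an available clause of the $\dee\deebar$-lemma — that the ``$\im(\dee)\cap\ker(\deebar)$'' form and the ``$\im(\deebar)\cap\ker(\dee)$'' form are each invoked only where they are stated, which is exactly what makes the two extreme degrees $m=-n$ and $m=0$ delicate (and is presumably why the $\dee\deebar$-lemma was written with those degrees singled out), and, where a dual form is needed, that it follows from the stated clauses via $\dee\deebar=-\deebar\dee$ and downward induction on the primitive degree. Everything else — the handling of $\delta$ on $e^{k}\Prim_{m}$ via Lemma~\ref{e^kdelta}, and the commutators $[\dee,e]=[\deebar,e]=0$ — is routine bookkeeping. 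As an alternative organization one could instead aim to deduce from the $\dee\deebar$-lemma the condition that $[e^{i}]\colon H^{-i}(V,d)\to H^{i}(V,d)$ is an isomorphism for all $i$ and then invoke Theorem~\ref{TripleEquiv}, but I expect the primitive-component argument above to be the most direct route.
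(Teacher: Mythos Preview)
Your proposal is correct and follows essentially the same route as the paper: primitive-decompose both $\alpha$ and its $d$-primitive $\beta$, use harmonicity of $\alpha$ (hence symphonicity of the $a_m$) together with $d\delta\beta=0$ to get $\dee\deebar b_\ell=0$, apply the $\dee\deebar$-lemma to each primitive component, and reassemble via $d\delta=(1-i)\,\dee\deebar$ on $\Prim_i$ and $[e,d\delta]=0$. Your bookkeeping (the formula $a_m=\dee b_{m-1}+\deebar b_{m+1}$ and the $\star$-reduction to a single inclusion) is in fact cleaner than the paper's, and you rightly flag the matching at the extreme primitive degrees as the one delicate point --- the paper handles it in the same ``possibly zero'' spirit without further comment.
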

\begin{proof}
    It suffices to check this on each weight, so fix $v_i \in \widehat{V_i}$ and first suppose that $v_i \in \im d$ so that $v_i = dw_{i-1}$. Then taking primitive decompositions of both we have
    \begin{equation*}
        v_i = \sum_{k \geq 0}e^kv_{i-2k},\qquad w_{i-1} = \sum_{k \geq 0}e^kw_{i-1-2k}.
    \end{equation*}
    Combining this with the fact that $v_i=dw_{i-1}$, we have
    \begin{equation*}
        \sum_{k \geq 0} e^k(v_{i-2k}-dw_{i-1-2k})=0,
    \end{equation*}
    forcing $v_{i-2k} = dw_{i-1-2k} = \dee w_{i-1-2k}+e\deebar w_{i-1-2k}$ for all $k$. Since $v_i$ is harmonic, each $v_{i-2k}$ is symphonic, so we must have that $\dee w_{i-1-2k} \in \im \dee \cap \ker \deebar$ and $\deebar w_{i-1-2k} \in \im \deebar \cap \ker \dee$. By the $\dee \deebar$-lemma then we must have that there are $\gamma_{i-2k}, \rho_{i-2-2k} \in \im(\dee \deebar)$ (possibly zero) such that
    \begin{equation*}
        \dee w_{i-1-2k} = \dee \deebar \gamma_{i-2k},  \qquad \deebar w_{i-1-2k} = \dee \deebar \rho_{i-2-2k}.
    \end{equation*}
   From here, we have that 
   \begin{align*}
       v_i &= \sum_{k \geq 0} e^k (\dee w_{i-1-2k}+e\deebar w_{i-1-2k}) \\
       &= \sum_{k \geq 0} e^k (\dee \deebar \gamma_{i-2k} + e\dee\deebar \rho_{i-2(1+k)}) \\
       &= \sum_{k \geq 0} e^k \left( d\delta \frac{\gamma_{i-2k}}{1-(i-2k)} + e d \delta \frac{\rho_{i-2(1+k)}}{1-i+2(1+k)} \right) \\
       &=d\delta \sum_{k \geq 0}e^k \left(  \frac{\gamma_{i-2k}}{1-(i-2k)} + e  \frac{\rho_{i-2(1+k)}}{1-i+2(1+k)} \right)
   \end{align*}
   where we repeatedly use the $\mathfrak{g}$ relations, and make note that the constants that appear in the denominator are non-vanishing. The only possible point of confusion is why the $\gamma_{i-2k}$ and $\rho_{i-2(1+k)}$ are primitive; if they're not taking a primitive decomposition of them once again allows the proof to go through. A similar argument holds for the case when $v \in \im(\delta)$.
\end{proof}
\begin{lemma}\label{ddeltaimpliesdeedeebar}
    The $\dee \deebar$-lemma holds for $V \in \mathrm{Ob}(\mathscr{C}_n)$ if the $d \delta$-lemma does.
\end{lemma}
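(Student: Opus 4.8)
The plan is to prove the three displayed equalities defining the $\dee\deebar$-lemma one primitive weight at a time. In each, the reverse inclusion $\supseteq$ is a short formal check from $\dee^2=\deebar^2=0$ together with the $\dee,\deebar$-commutation relations, so the content is the forward inclusions $\subseteq$. The engine throughout is the dictionary between $d\delta$ and $\dee\deebar$ on primitives: on $\Prim_i$ one has $\delta=(1-i)\deebar$ and $d\delta=(1-i)\dee\deebar$, and on arbitrary elements $d\delta$ commutes with every power of $e$, since by $[e,d]=0$ and Lemma \ref{e^kdelta}, $d\delta e^{j}=de^{j}\delta-jde^{j-1}d=e^{j}d\delta-je^{j-1}d^{2}=e^{j}d\delta$. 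I would also use the standing facts that an element is harmonic iff each term of its primitive decomposition is symphonic, and that powers of $e$ are injective on primitive subspaces.

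For the generic weight $-n+1\le k\le -1$, take $a_k\in\im(\dee)\cap\ker(\deebar)\cap\Prim_k$. Using the primitive decomposition and injectivity of $e^{j}$ on primitives, I would first reduce to $a_k=\dee b$ with $b\in\Prim_{k-1}$; then $\dee a_k=\dee^{2}b=0$ and $\deebar a_k=0$, so $a_k$ is symphonic, hence harmonic, hence $d$- and $\delta$-closed. The crucial step is to upgrade "$a_k\in\im(\dee)$" to "$a_k\in\im(d)$": from $db=\dee b+e\deebar b=a_k+\tfrac{1}{2-k}e\delta b$ (using $\deebar b=\tfrac{1}{2-k}\delta b$ on $\Prim_{k-1}$) and $\delta a_k=0$ one computes $\tfrac{k-1}{2-k}\,d\delta b=0$, so $d\delta b=0$ and $\delta b\in\ker(d)\cap\im(\delta)=\im(d\delta)$ by the $d\delta$-lemma; writing $\delta b=d\delta c$ and pushing $e$ through via $[e,d]=0$ and $[e,\delta]=d$ gives $e\delta b=d\delta(ec)$, whence $a_k=d\!\left(b-\tfrac{1}{2-k}\delta(ec)\right)\in\im(d)$. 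Now $a_k\in\im(d)\cap\ker(\delta)=\im(d\delta)$, say $a_k=d\delta u$; decomposing $u$ into primitive components and applying the dictionary, $d\delta u=\sum e^{j}(1-m)\dee\deebar u^{[j]}_{m}$ with $u^{[j]}_m$ primitive of weight $m$, and projecting the identity onto the summand $\Prim_k$ of the primitive decomposition leaves the single term $(1-k)\dee\deebar u_k$, so $a_k=\dee\deebar\!\left(\tfrac{1}{1-k}u_k\right)\in\im(\dee\deebar)$.

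The weight $-n$ equality is degenerate: $\deebar$ kills $\Prim_{-n}$ (its target $\Prim_{-n-1}$ is $0$) and nothing maps into weight $-n$ under $\dee$ (its source $V_{-n-1}$ is $0$), so both sides are $\{0\}$. For the weight $0$ equality I would run the dual of the generic argument, replacing $(\dee,\im(\dee))$ by $(\deebar,\im(\deebar))$ and upgrading membership in $\im(\deebar)$ to membership in $\im(\delta)$ before invoking the $d\delta$-lemma; alternatively one can transport the conclusion of the generic argument through the $\star$-operator, exactly as in the proof of the $\star$-duality isomorphism between $H^{\bullet}(\ker\delta,d)$ and $H^{-\bullet}(\ker d,\delta)$.

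The step I expect to be the main obstacle is precisely this "upgrade": an element of $V$ that is only known to lie in $\im(\dee)$ (or $\im(\deebar)$) must be shown to lie in $\im(d)$ (or $\im(\delta)$) \emph{itself}, not merely as a primitive component of something exact, so that the hypothesis can be applied to it directly. In the generic weights this works because the coefficient $2-k$ (and then $1-k$) coming from $\delta|_{\Prim_i}=(1-i)\deebar$ is nonzero; the delicate point is the weight-$0$ boundary, where the analogous coefficient degenerates at $i=1$, so the promotion there must be arranged through $\star$-duality or by directly exhibiting the required harmonic representative. Everything else is bookkeeping with primitive decompositions and the $\mathfrak{g}$-relations.
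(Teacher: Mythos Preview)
Your approach is correct and takes a somewhat different route from the paper's. The paper first handles the case $v_i\in\im(\deebar)$ directly (immediate, since $\delta=(1-i)\deebar$ on primitives upgrades this to $v_i\in\im(\delta)\cap\ker(d)=\im(d\delta)$, and one converts back via $d\delta=(1-i)\dee\deebar$), and then reduces the case $v_i\in\im(\dee)$ to that first case: writing $v_i=\dee\beta_{i-1}$, it shows $d\beta_{i-1}\in\ker(\delta)\cap\im(d)=\im(d\delta)$ and uses this to exhibit $v_i$ as an element of $\im(\deebar)$, whence the previous case applies. You instead bypass the $\im(\deebar)$ intermediate entirely and promote $a_k\in\im(\dee)$ directly to $a_k\in\im(d)$ via the auxiliary application of the $d\delta$-lemma to $\delta b$, then invoke $\im(d)\cap\ker(\delta)=\im(d\delta)$ and project onto $\Prim_k$. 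Both arguments ultimately use the hypothesis twice in the $\im(\dee)$ case; yours is a single linear pass, the paper's is organized as a two-stage reduction.

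One remark: the weight-$0$ case is less delicate than you suggest, at least for $n\ge 2$. Since $\Prim_1=0$, one has $\im(\deebar)\cap\Prim_0=\deebar(\Prim_1)=0$, and likewise $\dee\deebar(\Prim_0)=-\deebar\dee(\Prim_0)=0$ because $\dee|_{\Prim_0}$ lands in $\Prim_1=0$ and $[\dee,\deebar]=0$ holds on $V_0$ when $0<n-1$. So both sides of the third equality vanish and no promotion is needed. Your $\star$-duality suggestion would not help here in any case, since $\star$ exchanges $V_i$ with $V_{-i}$ and does not carry primitives to primitives.
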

\begin{proof}
    Suppose that $v_i \in \ker(\dee) \cap \ker(\deebar) \cap \Prim_i$ and recall that this forces $v_i$ to be harmonic and each of it's primitive decomposition terms to be symphonic. First, suppose that $v_i \in \im(\deebar)$, i.e. $v_i = \deebar \beta_{i+1}$. Taking primitive decompositions, we have
    \begin{equation*}
        v_i = \sum_{k \geq 0}e^k v_{i-2k}, \qquad \beta_{i+1}= \sum e^k\beta_{i+1-2k}.
    \end{equation*}
    Using coexactness, we have must have that
    \begin{equation*}
        v_{i-2k} = \deebar \beta_{i+1-2k} = \delta \left( \frac{\beta_{i+1-2k}}{-i+2k}\right)
    \end{equation*}
    so $v_{i-2k} \in \im (\delta) \cap \ker(d) = \im(d\delta)$. Thus there exists $\rho_{i-2k}$ such that
    \begin{equation*}
        v_{i-2k} = d \delta \rho_{i-2k} = \dee \deebar (1-i+2k)\rho_{i-2k},
    \end{equation*}
    giving us that
    \begin{equation*}
        v_i = \sum_{k \geq 0}e^k v_{i-2k} = \dee \deebar \sum_{k \geq 0} e^k((1-i+2k)\rho_{i-2k}).
    \end{equation*}
    Again it may be unclear as to why the $\rho_{i-2k}$ are primitive, but this can be chosen to happen as before. Now, suppose that $v_i \in \im(\dee)$ so $v_i = \dee \beta_{i-1}$ for some $\beta_{i-1}$. Equating primitive decompositions we have that $v_{i-2k}=\dee \beta_{i-1-2k}$, where
    \begin{equation*}
        v_i = \sum_{k \geq 0} e^kv_{i-2k}, \qquad \beta_{i-1} = \sum_{k \geq 0}e^k \beta_{i-1-2k}.
    \end{equation*}
    Since $v_i$ is symphonic, we have that $\deebar \dee \beta_{i-1-2k}=0$ for all $k$, and so
    \begin{align*}
        d \delta \beta_{i-1} &= \sum d \delta e^k \beta_{i-1-2k} \\
        &= \sum_{k \geq 0} e^k d \delta \beta_{i-1-2k} \\
        &= \sum_{k \geq 0} \frac{1}{2(1-k)-i} e^k \dee \deebar \beta_{i-1-2k} \\
        &=\sum_{k \geq 0} \frac{-1}{2(1-k)-i} e^k \deebar \dee \beta_{i-1-2k} \\
        &=0.
    \end{align*}
    Since $d \beta_{i-1} \in \ker(\delta) \cap \im(d)$, the $d \delta$-lemma then we have that there exist $\gamma_i$ such that $d\beta_{i-1} = d \delta \gamma_i$. Taking a primitive decomposition of $\gamma_i$ we have that
    \begin{align*}
        d \delta \gamma_i &= \sum_{k \geq 0} e^k d \delta \gamma_{i-2k} \\
        &=\sum_{k \geq 0} (1-i+2k)e^k \dee \deebar \gamma_{i-2k} \\
        &=\deebar \dee \sum_{k \geq 0} (i-1-2k)e^k\gamma_{i-2k},
    \end{align*}
    showing that
    \begin{equation*}
        v_i = \dee \beta_{i-1} - e\deebar \beta_{i-1} = \deebar \left( \dee \sum_{k \geq 0}(i-1-2k)e^k\gamma_{i-2k} - e\beta_{i-1} \right) \in \im(\deebar),
    \end{equation*}
    from which the result follows from the previous case.
\end{proof}
\begin{theorem}\label{QuadEquiv}
    Let $V \in \mathrm{Ob}(\mathscr{C}_n)$. Then the following are equivalent:
    \begin{enumerate}
        \item The $d\delta$-lemma holds for $V$.
        \item The map $[e^i]:H^{-i}(V,d) \ra H^i(V,d)$ is an isomorphism for all $i$.
        \item The inclusion $(\ker(\delta),d) \hookrightarrow (V,d)$ is a quasi-isomorphism.
        \item The $\dee \deebar$-lemma holds for $V$.
    \end{enumerate}
\end{theorem}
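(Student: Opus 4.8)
The plan is to assemble this statement from pieces already in hand rather than to prove anything new from scratch. Theorem \ref{TripleEquiv} already establishes the equivalence $(1) \Leftrightarrow (2) \Leftrightarrow (3)$, so the only work is to splice condition $(4)$ into the cycle. For that I would invoke the two lemmas immediately preceding the theorem: Lemma \ref{ddeltaimpliesdeedeebar} supplies the implication $(1) \implies (4)$, since it shows the $\dee\deebar$-lemma holds whenever the $d\delta$-lemma does, and Lemma \ref{deedeebarimpliesddelta} supplies $(4) \implies (1)$. Chaining these two with Theorem \ref{TripleEquiv} makes all four conditions equivalent, so the body of the proof is essentially the single sentence ``$(1)\Leftrightarrow(4)$ by Lemmas \ref{deedeebarimpliesddelta} and \ref{ddeltaimpliesdeedeebar}, and $(1)\Leftrightarrow(2)\Leftrightarrow(3)$ by Theorem \ref{TripleEquiv}.''

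The substantive content, of course, lives inside Lemmas \ref{deedeebarimpliesddelta} and \ref{ddeltaimpliesdeedeebar}, which rest on the weight-by-weight dictionary developed just before them: the identity $d\delta = (1-i)\dee\deebar$ on $\Prim_i$, the fact that a vector is harmonic if and only if every term of its primitive decomposition is symphonic, and the observation that the scalars $1-i$ appearing in the translation never vanish across the finitely many relevant weights. Both lemmas then proceed by taking primitive decompositions of the element in question, matching terms $e^k v_{i-2k}$, applying the relevant one-weight $\dee\deebar$-lemma (or $d\delta$-lemma) to each primitive term, and reassembling; the only mild care needed inside is to note that the witnesses can be chosen primitive by adjusting within their own primitive decompositions.

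There is no genuine obstacle at the level of this theorem. The one point deserving attention is matching the degree ranges in the definition of the $\dee\deebar$-lemma — the generic range $-n+1 \le k \le -1$, the boundary case $k = -n$, and the zero-weight case $k = 0$ — against the cases handled inside the two lemmas, which they do by construction. In particular the equivalence is genuinely degreewise, so no compactness or Poincar\'e-duality hypothesis enters, in contrast with the classical topological formulation that this algebraic statement abstracts.
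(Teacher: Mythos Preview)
Your proposal is correct and matches the paper's proof exactly: the paper also cites Theorem \ref{TripleEquiv} for the equivalence of the first three conditions and Lemmas \ref{deedeebarimpliesddelta} and \ref{ddeltaimpliesdeedeebar} for $(1)\Leftrightarrow(4)$. Your additional commentary on where the substantive content lives and the degreewise nature of the argument is accurate but goes beyond what the paper records in the proof itself.
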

\begin{proof}
    The first three are equivalent by Theorem \ref{TripleEquiv}. The first and last are equivalent by Lemmas \ref{ddeltaimpliesdeedeebar} and \ref{deedeebarimpliesddelta}.
\end{proof}

\section{Lefschetz Algebroids}
\subsection{Symplectic Lie Algebroids}
Lie algebroids were originally introduced by Pradines \cite{zbMATH03247872} as a generalization of the tangent bundle of a manifold to the Lie Groupoid setting.
\begin{definition}
     A ``Lie algebroid'' $(\A, \rho, M)$ over a $C^\infty$-manifold $M$ is a vector bundle $\A \ra M$, equipped with an ``anchor'' $\rho:\A \ra TM$, and a bracket $[\cdot,\cdot]_\A$ on sections of $\A$ that is bilinear, alternating, and satisfies the Jacobi identity, such that
    \begin{enumerate}
        \item $[X,fY]_\A=f[X,Y]_\A + (\rho(X) \cdot f) Y$ for all $X,Y \in \Gamma(\A)$ and $f \in C^\infty(M)$.
        \item $\rho([X,Y]_\A) = [\rho(X),\rho(Y)]_{TM}$ for all $X,Y \in \Gamma(\A)$.
    \end{enumerate}
\end{definition}
\begin{definition}\label{DCEcomplex} Given a Lie algebroid $(\A, \rho,M)$, we can define the ``deRham-Chevalley-Eilenberg complex'' of $\A$, where our cochain complex is made of
\begin{equation*}
    \Omega^k_\A(M) = \Gamma(\Lambda^k\A^*),
\end{equation*}
and we define 
\begin{equation*}
    \Omega^*_\A(M) = \bigoplus_k \Omega^k_\A(M).
\end{equation*}
This comes with an $\A$-differential, $d_\A:\Omega^k_\A(M) \ra \Omega^{k+1}_\A(M)$, defined as
\begin{align*}
    d_\A \eta(X_1,...,X_{k+1}) &= \sum_{i=1}^{k+1} (-1)^{i+1}\rho(X_i)\cdot \eta (X_1,...,\widehat{X_i},...,X_{k+1}) \\
    &+ \sum_{i<j} (-1)^{i+j}\eta([X_i,X_j], X_1,...,\widehat{X_i},...,\widehat{X_j},...,X_{k+1})
\end{align*}
for $X_i \in \Gamma(\A)$ and $\eta \in \Omega^k_\A(M)$. 
\end{definition}
A standard computation shows that $d_\A^2=0$, and as such we can consider the cohomology of this complex:
\begin{definition}
    Let $(A,\rho,M)$ be a Lie algebroid. The ``Lie algebroid cohomology'' of $\A$, denoted $H^k_\A(M)$ is the cohomology of the complex
\[\begin{tikzcd}[ampersand replacement=\&]
	\dots \& {\Omega^{k-1}_\A(M)} \& {\Omega^k_\A(M)} \& {\Omega^{k+1}_\A(M)} \& \dots
	\arrow["{d_\A}", from=1-1, to=1-2]
	\arrow["{d_\A}", from=1-2, to=1-3]
	\arrow["{d_\A}", from=1-3, to=1-4]
	\arrow["{d_\A}", from=1-4, to=1-5]
\end{tikzcd}\]
that is,
\begin{equation*}
     H_\A^k(M) = \frac{\ker\left(\Omega^k_\A(M) \xrightarrow{d_\A}\Omega^{k+1}_\A(M)\right)}{\im \left( \Omega^{k-1}_\A(M) \xrightarrow{d_\A}\Omega^k_\A(M) \right)}
\end{equation*}
which we can rephrase by using $\A$-closed and exact terminology
\begin{equation*}
    H_\A^k(M) = \frac{d_\A\text{-closed forms}}{d_\A\text{-exact forms}}.
\end{equation*}
\end{definition}
The standard Cartan Calculus from the Tangent Bundle also extends to Lie algebroids, however we'll only use the notion of the exterior and interior products.
\begin{definition}
    Given $X \in \Gamma (\A)$, we define ``$\A$-interior multiplication'' with $X$, or ``contraction by $X$'', denoted $\iota_X$, as the operator $\iota_X:\Omega^k_\A(M) \ra \Omega^{k-1}_\A(X)$ such that
    \begin{equation*}
        \iota_X\eta(X_1,...,X_{k-1}) = \eta(X,X_1,...,X_{k-1}),
    \end{equation*}
    where $X_i \in \Gamma(\A)$ and $\eta \in \Omega_\A^k(M)$.
\end{definition}
We now arrive at the central object of the result, a symplectic Lie algebroid.
\begin{definition}
    Let $(\A, \rho , M)$ be a Lie algebroid. An ``$\A$-symplectic form'' $\omega$ is a $d_\A$-closed, nondegenerate element of $\Omega^2_\A(M)$.
\end{definition}
\begin{definition}
    A ``symplectic Lie algebroid'' is a Lie algebroid $\A$ equipped with a choice of $\A$-symplectic form.
\end{definition}
The presence of an $\A$-symplectic form immediately places restrictions on the underlying vector bundle:
\begin{corollary}
    Let $(\A \ra M, \rho ,\omega)$ be a symplectic Lie algebroid. Then the rank of $\A$ is even and $\A$ is orientable.
\end{corollary}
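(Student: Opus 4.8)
The plan is to reduce both assertions to pointwise linear algebra on the fibers of $\A$. Fix $p \in M$ and view $\omega_p \in \Lambda^2\A_p^*$ as a skew-symmetric bilinear form on the finite-dimensional vector space $\A_p$. Nondegeneracy of $\omega$ as an element of $\Omega^2_\A(M)$ unwinds to the statement that the bundle map $\A \to \A^*$, $X \mapsto \iota_X\omega$, is an isomorphism; restricting to $p$ this says exactly that $\omega_p$ is a nondegenerate alternating form. I would then invoke the classical fact that a vector space carrying a nondegenerate alternating form is even-dimensional: pick $v_1 \neq 0$, pick $w_1$ with $\omega_p(v_1,w_1) = 1$, pass to the $\omega_p$-orthogonal complement of $\operatorname{span}(v_1,w_1)$ — on which $\omega_p$ again restricts to a nondegenerate alternating form — and induct. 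This simultaneously yields a basis in which $\omega_p = \sum_{i=1}^m e^i \wedge f^i$. Since the fiber dimension is locally constant and $M$ is connected (otherwise argue on each connected component), $\dim \A_p$ equals a fixed even number $2m$.

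For orientability I would form the top power $\omega^m = \omega \wedge \cdots \wedge \omega \in \Omega^{2m}_\A(M) = \Gamma(\Lambda^{2m}\A^*) = \Gamma(\Lambda^{\mathrm{top}}\A^*)$. In the symplectic basis above one computes $\omega_p^m = m!\, e^1 \wedge f^1 \wedge \cdots \wedge e^m \wedge f^m \neq 0$, so $\omega^m$ is a nowhere-vanishing section of the line bundle $\Lambda^{\mathrm{top}}\A^*$. A nowhere-vanishing section trivializes $\Lambda^{\mathrm{top}}\A^*$, hence also its dual $\Lambda^{\mathrm{top}}\A$, and a vector bundle whose top exterior power is trivial is by definition orientable; therefore $\A$ is orientable.

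I do not expect a genuine obstacle here — this is the Lie-algebroid transcription of the classical facts that a symplectic vector space is even-dimensional and that a symplectic manifold is orientable. The only points deserving a word of care are (i) unwinding the chosen definition of \emph{nondegenerate} for $\omega \in \Omega^2_\A(M)$ so that it delivers fiberwise nondegeneracy of $\omega_p$, and (ii) using connectedness of $M$ to speak of a single fiber dimension (stating the conclusion componentwise if $M$ is disconnected). Neither is a real difficulty.
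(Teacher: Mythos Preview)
Your argument is correct and is exactly the standard symplectic linear algebra one would expect. The paper in fact gives no proof of this corollary at all --- it is stated immediately after the definition of a Symplectic Lie Algebroid and left as an evident consequence --- so your proposal supplies precisely the details the paper omits. The only cosmetic remark is that the paper's Lemma~\ref{SLADarboux} later carries out the same Gram--Schmidt-type construction you sketch (building a symplectic frame), so your pointwise basis argument is fully in the spirit of what follows.
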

There are other obstructions as well, see \cite{klaasse2017geometric}. Symplectic Lie algebroids were originally introduced by Nest and Tsygan \cite{nest1999deformations} while investigating extensions of Fedesov Deformation Quantization, and much work has been done on them in recent years by may authors. As in classical symplectic geometry, symplectic Lie algebroids arise naturally as the ``phase space'' of any Lie algebroid as originally uncovered by Mart\'inez \cite{Martinez}.
\begin{prop}[Martinez]
    Let $B \ra N$ be an arbitrary Lie algebroid, and $\pi:B^* \ra N$ be the projection. Define the bundle
    \begin{equation*}
        \A = \pi^! B = TB^* \times_{TN}B
    \end{equation*}
    which is a Lie algebroid over $B^*$. Then $\omega = -d_\A\alpha$
    is a canonical symplectic structure on $B^*$, where $\alpha\in \Omega_\A^1(B^*)$ is defined as follows after identifying elements of $\A$ as tuples in the product bundle
    \begin{equation*}
        \alpha(x,p,v,b) = p(b) \qquad x \in N, p \in B^*_x , v \in T_p B^*, b \in B_x
    \end{equation*}
    satisfying $T_p \pi(v) = \rho_B(b)$. 
 \end{prop}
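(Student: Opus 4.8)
The plan is to verify the two defining conditions of an $\A$-symplectic form for $\omega = -d_\A\alpha$ on $\A = \pi^! B$: that it is $d_\A$-closed and that it is nondegenerate. Closedness is immediate, since $d_\A^2 = 0$ on the deRham--Chevalley--Eilenberg complex of any Lie algebroid, so $d_\A\omega = -d_\A^2\alpha = 0$. (That $\A$ has even fibre rank and is orientable is already recorded in the preceding corollary; one notes along the way that $\A$ has fibre rank $2\,\mathrm{rk}\,B$, as $\dim(TB^*\times_{TN}B) = (\dim N + \mathrm{rk}\,B) + \mathrm{rk}\,B - \dim N$.) So the substance is nondegeneracy.

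For nondegeneracy I would work with the short exact sequence of vector bundles over $B^*$
\[
0 \longrightarrow \pi^*B^* \longrightarrow \A = \pi^! B \xrightarrow{\ \mathrm{pr}_B\ } \pi^*B \longrightarrow 0,
\]
where $\mathrm{pr}_B(v,b) = b$ is the canonical Lie-algebroid morphism $\pi^!B \to B$ covering $\pi$ (viewed as a bundle map $\A\to\pi^*B$), and the subbundle $\ker(\mathrm{pr}_B) = \{(v,0) : T\pi(v) = 0\}$ is canonically identified with $\pi^*B^*$ via the vertical lift of the vector bundle $B^*\to N$. Observe that $\alpha = \mathrm{pr}_B^*\tau$, where $\tau$ is the tautological section of $\pi^*B^*$ (at $p\in B^*$, $\tau(p) = p$), so $\alpha$ vanishes on $\pi^*B^*$. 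The first key step is that $\pi^*B^*$ is $\omega$-isotropic: for $\pi$-vertical sections $(V,0),(V',0)$, all three terms of the Chevalley--Eilenberg formula for $d_\A\alpha$ vanish, because $\alpha(V,0) = \alpha(V',0) = 0$ and, since $(V,0)$ and $(V',0)$ are both $\mathrm{pr}_B$-related to the zero section of $B$, so is their bracket, whence $\alpha([(V,0),(V',0)]) = 0$. As $\mathrm{rk}\,\pi^*B^* = \mathrm{rk}\,B = \tfrac12\,\mathrm{rk}\,\A$, once $\omega$ is shown nondegenerate this subbundle is automatically Lagrangian.

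The second key step is to compute the pairing $\pi^*B^* \otimes \pi^*B \to \R$ that $\omega$ induces through this sequence and show it is, up to sign, the tautological duality pairing $B^*\times B \to \R$. Given a $\pi$-vertical section $(V,0)$ and any lift $\widehat s$ of $s\circ\pi$ for $s\in\Gamma(B)$ (so $\mathrm{pr}_B\widehat s = s\circ\pi$, hence $\widehat s$ is $\mathrm{pr}_B$-related to $s$), the Chevalley--Eilenberg formula gives
\[
\omega\big((V,0),\widehat s\big) = -\,\rho_\A(V,0)\big[\alpha(\widehat s)\big] = -\,V\big[\langle p, s\rangle\big],
\]
since $\alpha(V,0) = 0$ removes one term and $[(V,0),\widehat s\,]$ is $\mathrm{pr}_B$-related to $[0,s]_B = 0$, so the bracket term drops. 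Choosing a local frame $\{e_a\}$ of $B$ with induced fibre coordinates $p_a$ on $B^*$ and vertical frame $\{\partial_{p_a}\}$, and taking $s = e_b$, this equals $-\partial_{p_a}[p_b] = -\delta_{ab}$; so the induced pairing is minus the duality pairing, which is nondegenerate. The proof then finishes formally: if $X\in\A$ satisfies $\omega(X,\cdot) = 0$, then its class in $\pi^*B$ pairs trivially with all of $\pi^*B^*$, forcing $X\in\pi^*B^*$ by nondegeneracy of the induced pairing; but then, by isotropy, $\omega(X,\cdot)$ descends to the induced pairing of $X$ against $\pi^*B$, which vanishes, so $X = 0$. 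Finally, $\alpha$ (hence $\omega$) is intrinsically defined, so nothing depends on the auxiliary frame.

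The main obstacle I anticipate is bookkeeping for the pullback Lie algebroid $\pi^!B$: its sections do not split canonically into "vertical" and "horizontal" parts, and one must handle its anchor and bracket carefully to justify the two $\mathrm{pr}_B$-relatedness claims used above. A safe implementation is to run the whole computation in the induced coordinates $(x^i,p_a)$ on $B^*$ and the local frame $\{\pi^!e_a,\ \partial_{p_a}\}$ of $\A$: there $\alpha = \sum_a p_a\,\theta^a$ with $\theta^a$ dual to $\pi^!e_a$, and expanding $\omega = -d_\A\alpha$ yields a matrix that is block upper-triangular with off-diagonal block the identity (coming from $\rho_\A(\partial_{p_a})[p_b] = \delta_{ab}$), which is invertible regardless of the remaining block. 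This coordinate form also makes it transparent that the locally defined $\omega$'s agree and assemble to a global $\A$-symplectic form.
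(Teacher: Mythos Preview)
Your argument is correct: closedness is automatic, and your nondegeneracy proof via the short exact sequence $0 \to \pi^*B^* \to \pi^!B \to \pi^*B \to 0$ is clean and complete. The two $\mathrm{pr}_B$-relatedness claims you flag are exactly where care is needed, and you handle them properly (both follow because $\mathrm{pr}_B$ is a Lie algebroid morphism). The local-frame computation at the end is a nice sanity check and would also serve as a standalone proof.

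The paper, however, does not prove this proposition at all: it simply refers the reader to Mart\'inez and to Lin et al. So there is no ``paper's own proof'' to compare against beyond those external references. Your write-up is therefore strictly more than what the paper provides; if anything, it supplies the argument the paper chose to outsource. The intrinsic approach you take (isotropic half-rank subbundle plus the induced pairing being the duality pairing) is the standard one in the literature and matches in spirit what one finds in the cited sources, though the coordinate version you sketch at the end is closer to how Mart\'inez originally presents it.
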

 \begin{proof}
     See originally Martinez \cite[Section 7]{Martinez} or Lin \cite[Proposition 2.2.1]{Lin_2023}.
 \end{proof}
 More recently, Lin et al. \cite{Lin_2023} proved an analogue of the Darboux-Marsden-Weinstein Theorem and described the processes of symplectic reduction in this context. In general, examples exist plentifully: 
\begin{itemize}
    \item In the case that the base manifold is a point, we recover the notion of a quasi-Frobenius Lie algebra. In this setting, a Frobienus Lie algebra corresponds to an $\A$-symplectic form that is exact, see \cite[Chapter 3.1]{MR1300632}.
    \item In the case that $\A=TM$, a $\A$-symplectic form is a symplectic form in the classical sense.
    \item $b$-symplectic manifolds, also known as log-symplectic manifolds give examples of symplectic Lie algebroids, as the $b$-tangent bundle, typically denoted ${}^bTM$ is a Lie algebroid with the inclusion map as an anchor, and the Lie bracket of vector fields as it's bracket. In a neighborhood of the hypersurface of interest, we have that any $b$-symplectic form looks like
    \begin{equation*}
        \omega = \frac{1}{x_1}dx_1 \wedge dx_2 +dx_3 \wedge dx_4 + \dots + dx_{2n-1} \wedge dx_{2n}, 
    \end{equation*}
    where $x_1$ is the direction along the hypersurface. For more on $b$-symplectic (and Poisson) geometry, see \cite{Guillemin_2014}.
    \item In celestial mechanics, the McGhee coordinate change was introduced \cite{McGehee1974} to help solve the planar reduced 3-body problem. In this setting, then natural symplectic structure becomes
    \begin{equation*}
        \omega = - \frac{4}{x^3}dx \wedge dy + d\alpha \wedge dG.
    \end{equation*}
    This is an example of $b^3$-symplectic manifold. In general, $b^k$-symplectic manifolds give examples of symplectic Lie algebroids. For more on these, see \cite{scott2013geometrybkmanifolds},\cite{Guillemin2017}.
    \item One often employs and encounters symplectic Lie algebroids in Poisson geometry. On some Poisson manifolds, one can construct a symplectic Lie algebroid which induces the underlying Poisson structure. In this case, the Poisson structure can be ``lifted'' to the algebroid, where it behaves much more like a symplectic object, and some computations are more amenable. Which Poisson manifolds admit these symplectic Lie algebroids is still an open question, though many families of examples are known. For more, see \cite{braddell2017invitationsingularsymplecticgeometry}.
\end{itemize}
If we have two symplectic Lie algebroids in hand, their product is also a symplectic Lie algebroid.
\begin{prop}
    Given symplectic Lie algebroids $(\A_1 \ra M_1,\omega_1)$ and $(\A_2 \ra M_2, \omega_2)$, the product $(A_1 \times A_2 \ra M_1 \times M_2 , \omega)$ where $\omega = \pi_1^* \omega_1 + \pi_2^*\omega_2$ is a symplectic Lie algebroid where $\pi_i:\A_1 \times \A_2 \ra \A_i$ are the projection maps.
\end{prop}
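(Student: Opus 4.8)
The plan is to first equip $\A_1 \times \A_2 \to M_1 \times M_2$ with its standard product Lie algebroid structure, and then check that $\omega = \pi_1^*\omega_1 + \pi_2^*\omega_2$ is $d_\A$-closed and nondegenerate; these last two together are exactly the definition of an $(\A_1\times\A_2)$-symplectic form.

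First I would recall the product Lie algebroid. The anchor is $\rho := \rho_1 \times \rho_2 \colon \A_1 \times \A_2 \to TM_1 \times TM_2 = T(M_1 \times M_2)$. A section of $\A_1 \times \A_2$ over $M_1 \times M_2$ is locally a $C^\infty(M_1\times M_2)$-combination of sections of the two types $\pi_1^* X_1$ (for $X_1 \in \Gamma(\A_1)$) and $\pi_2^* X_2$ (for $X_2 \in \Gamma(\A_2)$). One defines the bracket on these generators by $[\pi_1^* X_1, \pi_1^* Y_1] := \pi_1^*[X_1,Y_1]_{\A_1}$, $[\pi_2^* X_2, \pi_2^* Y_2] := \pi_2^*[X_2,Y_2]_{\A_2}$, and $[\pi_1^* X_1, \pi_2^* X_2] := 0$, then extends by antisymmetry, $\R$-bilinearity, and the Leibniz rule $[s, fs'] = f[s,s'] + (\rho(s)\cdot f)\,s'$. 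The Jacobi identity and the anchor-morphism condition follow from those of $\A_1$ and $\A_2$ by a direct check on generators, using that $\rho$ respects the product splitting of the tangent bundle. Since this is the well-known product of Lie algebroids, I would state it and carry out at most the generator-level verification, or cite it.

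Next, closedness. The key point is that each projection $\pi_i$ is a Lie algebroid morphism, so the pullback $\pi_i^* \colon \Omega^\bullet_{\A_i}(M_i) \to \Omega^\bullet_{\A_1\times\A_2}(M_1\times M_2)$ commutes with the differentials. I would derive this directly from the formula in Definition \ref{DCEcomplex} by evaluating $d_{\A_1\times\A_2}(\pi_1^*\eta_1)$ on tuples of generating sections: on $\pi_1^*$-type sections it reproduces $\pi_1^*(d_{\A_1}\eta_1)$, and any slot filled by a $\pi_2^*$-type section kills the term, because $\rho(\pi_2^* X_2)$ acts only in the $M_2$-directions (on which $\pi_1^*\eta_1$ is constant) and because the mixed brackets vanish; symmetrically for $\pi_2^*$. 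Granting this, $d_{\A_1\times\A_2}\,\omega = \pi_1^* d_{\A_1}\omega_1 + \pi_2^* d_{\A_2}\omega_2 = 0$ since $\omega_1$ and $\omega_2$ are closed. (Alternatively one may quote the external-tensor-product description of the Chevalley–Eilenberg complex of a product algebroid, but the cochain-map property is all that is actually needed.)

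Finally, nondegeneracy, which is purely pointwise. The fiber of $\A_1\times\A_2$ over $(x_1,x_2)$ is $(\A_1)_{x_1}\oplus(\A_2)_{x_2}$; on it, $\pi_1^*\omega_1$ pairs only the first summand with itself and $\pi_2^*\omega_2$ only the second, so $\omega_{(x_1,x_2)}$ is the block-diagonal skew form $(\omega_1)_{x_1}\oplus(\omega_2)_{x_2}$. A block-diagonal skew form whose blocks are nondegenerate is nondegenerate, so $\omega$ is a nondegenerate element of $\Omega^2_{\A_1\times\A_2}(M_1\times M_2)$, hence an $(\A_1\times\A_2)$-symplectic form, and the product is a Symplectic Lie Algebroid; in particular its fiber dimension is $\dim\A_1+\dim\A_2$, even, consistent with the preceding corollary. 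The only genuinely sign-sensitive step is the verification that $\pi_i^*$ is a cochain map — either imported as the general fact that pullback along a Lie algebroid morphism commutes with $d$, or unwound from Definition \ref{DCEcomplex} as above — and everything else is immediate.
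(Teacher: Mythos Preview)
Your proposal is correct and follows essentially the same route as the paper: cite the product Lie algebroid structure, use that the projections $\pi_i$ are Lie algebroid morphisms so that $\pi_i^*$ are chain maps to get closedness, and observe nondegeneracy from that of $\omega_1,\omega_2$. Your treatment is simply more explicit---the paper dismisses nondegeneracy in one clause, whereas you spell out the block-diagonal argument, and you unwind the chain-map property from the Chevalley--Eilenberg formula rather than just quoting the equivalence between Lie algebroid morphisms and cochain maps.
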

\begin{proof}
    The product of two Lie algebroids has a unique Lie algebroid structure, see \cite{GroupoidAlgebroidNotes}, so it remains to be seen that the above $\omega$ is indeed a symplectic form. $\omega$ is nondegenerate as it's made from the pullback of two nondegenerate forms. Note that $\pi_i$ are Lie algebroid morphisms \cite[Example 7.4]{GroupoidAlgebroidNotes}, and as such they induce chain maps $\pi_i^*$. Using this we have that
    \begin{equation*}
        d_{\A_1 \times \A_2}\omega = d_{\A_1 \times \A_2}\pi_1^* \omega_1 + d_{\A_1 \times \A_2} \pi_2^* \omega_2 = \pi_1^*(d_{\A_1}\omega_1) + \pi_2^*(d_{\A_2}\omega_2) = 0.
    \end{equation*}
\end{proof}

\subsection{Kahler-Weil Identities}
Throughout this section, let $(\A \ra M, \rho , \omega)$ be a symplectic Lie algebroid of rank $2m$. The ``Lefschetz'' map is defined on the space of $\A$ forms, and is explicitly defined as 
\begin{align*}
    L:\Omega^k_\A(M) &\ra \Omega^{k+2}_\A(M)  \\
    \alpha &\mapsto \omega \wedge \alpha.
\end{align*}
If we let $\pi$ denote the dual bivector to $\omega$, we can define another operator on the space of $\A$-forms:
\begin{align*}
    \Lambda:\Omega^{k}_\A(M) &\ra \Omega^{k-2}_\A(M) \\
     \alpha &\mapsto \iota_\pi \alpha.
\end{align*}
These two operators classically generate an $\mathfrak{sl}_2(\R)$ representation on the space of forms of a symplectic manifold, and the same is true in this setting as well. To prove this, we'll need to work locally however and we have the following weaker analogue of the Darboux Theorem from classical symplectic Geometry. This Lemma is essentially a rephrasing of \cite[Lemma 3.23]{garmendia2025estructuresregularpoissonmanifolds}, though we state it again here for containment sake.
\begin{lemma}\label{SLADarboux}
    Let $(\A \ra M, \rho ,\omega)$ be a symplectic Lie algebroid of rank $2m$. Then for all $p \in M$, there exists a smooth symplectic local frame of $\A$, i.e. we can locally express $\omega$ as
    \begin{equation*}
        \omega = \sum_{1 \leq i \leq m} \mu_i \wedge \xi_i
    \end{equation*}
    where $\mu_i,\xi_i \in \Gamma(\A^*|_U)$ for a sufficiently small open neighborhood $U$ of $p$.
\end{lemma}
\begin{proof}
    This is true for any symplectic vector bundle, see \cite[Lemma 12.6]{LibermannMarle1987}.
\end{proof}
\begin{remark}
    Note here that we're not insisting that the algebroid $1$-forms that make up our local coframe of $\A^*$ are differentials of coordinate functions, or that they're even closed. The local frame $\{\mu_i,\xi_i\}_{i=1}^m$ of $\A^*$ is dual to $\{u_i,v_i\}_{i=1}^m$, the local frame of $\A$ on the same open set, but a geometrically inclined choice of normal coordinates that correspond to these frames are harder to pin down. In fact, there are multiple obstructions to this. One of these is cohomological as shown by Miranda and Scott \cite[Theorem 29]{Miranda_2020}. Another is the failure for a general Lie algebroid to carry a commutative frame. This can be most easily in the case that our Lie algebroid is a Lie Algebra that is not abelian. For these reasons we will not make an appeal to a full ``Darboux Theorem'' though work has been done on this see \cite{Lin_2023}. In many settings such as $b$-manifolds there are results on local normal forms, and in these settings the following sections results can be more easily understood geometrically, see \cite{Matveeva_2023}, \cite{Miranda_2020}.
\end{remark}
Using the preceding lemma, at any point $p \in M$, we have local symplectic frames of $\A$ and $\A^*$ on an open set $U \ni p$ given by
\begin{equation*}
    \A|_U = \mathrm{span}_{C^\infty(M)}\{u_i,v_i\}_{i=1}^m,  \qquad
    \A^*|_U = \mathrm{span}_{C^\infty(M)}\{\mu_i,\xi_i\}_{i=1}^m.
\end{equation*}
In such a frame we can write local expressions for $L$ and $\Lambda$
\begin{equation*}
    L = \sum_i e_{\mu_i}e_{\xi_i} \qquad \Lambda =\sum_i \iota_{v_i}\iota_{u_i},
\end{equation*}
where $e_v$ denotes the map $v \wedge -$.Using these expressions, we can prove the following:
\begin{lemma}\label{KahlerWeil}
    If $\alpha \in \Omega^k_\A(M)$, then $[L,\Lambda]\alpha=(k-m)\alpha$.
\end{lemma}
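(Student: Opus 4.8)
The plan is to reduce the identity $[L,\Lambda]\alpha=(k-m)\alpha$ on $\Omega^k_\A(M)$ to a purely pointwise, fiberwise linear-algebra statement, and then verify it in a local symplectic frame. First I would observe that both $L$ and $\Lambda$ are $C^\infty(M)$-linear tensorial operators (the anchor and bracket never enter their definitions), so the commutator $[L,\Lambda]$ is also $C^\infty(M)$-linear; hence it suffices to check the identity at an arbitrary point $p\in M$, acting on $\Lambda^k\A_p^*$. Then, by Lemma \ref{SLADarboux}, I can choose a local symplectic frame $\{u_i,v_i\}_{i=1}^m$ of $\A$ with dual coframe $\{\mu_i,\xi_i\}_{i=1}^m$ of $\A^*$, in which $\omega|_U=\sum_i \mu_i\wedge\xi_i$ and, as recorded just before the lemma,
\begin{equation*}
    L = \sum_i e_{\mu_i}e_{\xi_i}, \qquad \Lambda = \sum_i \iota_{v_i}\iota_{u_i}.
\end{equation*}

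With this local model, the problem becomes identical to the standard linear-algebra computation on an exterior algebra of a symplectic vector space. The key step is to expand $[L,\Lambda]=\sum_{i,j}\bigl(e_{\mu_i}e_{\xi_i}\iota_{v_j}\iota_{u_j}-\iota_{v_j}\iota_{u_j}e_{\mu_i}e_{\xi_i}\bigr)$ and push all interior products to the right past the exterior products using the graded Leibniz rule $\iota_X e_\eta + e_\eta \iota_X = \eta(X)\,\mathrm{id}$, together with the duality relations $\mu_i(u_j)=\xi_i(v_j)=\delta_{ij}$ and $\mu_i(v_j)=\xi_i(u_j)=0$. For $i\neq j$ all the cross terms cancel; for $i=j$ one is left with an operator built from $e_{\mu_i}\iota_{u_i}$ and $e_{\xi_i}\iota_{v_i}$, the ``number operators'' in the $\mu_i$ and $\xi_i$ directions. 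Summing over $i$, the $\mu$- and $\xi$-degree operators add up to the total form-degree operator, which on $\Lambda^k\A_p^*$ is $k\cdot\mathrm{id}$, while the constant terms accumulate to $-m\cdot\mathrm{id}$ (one unit from each of the $m$ symplectic pairs); hence $[L,\Lambda]=(k-m)\,\mathrm{id}$ at $p$. Since $p$ was arbitrary, the identity holds globally.

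Alternatively — and this is the cleaner route if one wants to avoid the index bookkeeping — I would use the already-established facts $\Lambda=\star L\star$ and $\star\star=\mathrm{Id}$ to set up an $\mathfrak{sl}_2$-triple: the operators $L$, $\Lambda$, and $H:=[L,\Lambda]$ should satisfy the $\mathfrak{sl}_2(\R)$ relations on the fiber $\Lambda^\bullet\A_p^*$, which is the classical statement that the exterior algebra of a symplectic vector space is an $\mathfrak{sl}_2$-representation. Then $H$ acts on $\Lambda^k\A_p^*$ by a scalar depending only on $k$, and one pins down the scalar as $k-m$ by testing on the two extreme cases: $\alpha$ of degree $0$ (where $\Lambda\alpha=0$, so $[L,\Lambda]\alpha = -\Lambda L\alpha = -\iota_\lambda\omega\cdot\alpha$, and one computes $\iota_\lambda\omega = m$), and $\alpha = \omega^m/m!$ the top-degree volume form (where $L\alpha=0$ and a $\star$-duality argument using $\Lambda=\star L\star$ gives the matching value).

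The main obstacle is simply the careful sign and index management in the commutator expansion — keeping track of which graded-commutator conventions apply when an interior product of degree $-1$ is moved past an exterior product of degree $+1$, and confirming that the off-diagonal ($i\neq j$) terms genuinely cancel rather than merely looking like they should. This is entirely routine but error-prone; I would either cite the standard reference (as the paper already does for the analogous $\Lambda=\star L\star$ via \cite{Shlomonotes}) or carry out the $i=j$ case in full and remark that the $i\neq j$ terms vanish by the same Leibniz identity. I do not anticipate any genuine mathematical difficulty beyond this, since the Lie algebroid structure plays no role once tensoriality has reduced everything to a pointwise claim.
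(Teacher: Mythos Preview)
Your primary approach is correct and matches the paper's proof exactly: reduce to a pointwise statement via $C^\infty(M)$-linearity, invoke the symplectic local frame of Lemma~\ref{SLADarboux}, and then carry out (or, as the paper does, simply cite \cite{Shlomonotes} for) the standard commutator computation on the exterior algebra of a symplectic vector space. One caution on your alternative route: asserting in advance that $H=[L,\Lambda]$ acts as a scalar on each $\Lambda^k\A_p^*$ is circular, since the $\mathfrak{sl}_2$ relations $[H,L]=2L$ and $[H,\Lambda]=-2\Lambda$ that would force this are derived \emph{from} the present lemma in Proposition~\ref{AlgebroidKahlerWeil}; testing on the extreme degrees alone does not pin down the scalar for intermediate $k$ without that input, so you still need the direct computation.
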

\begin{proof}
    These are local, $C^\infty(M)$-linear operators, so fix a point $p \in M$ and an open neighborhood $U \ni p$ such that we have the symplectic frames $\{u_i,v_i\}_{i=1}^m$ of $\A$ and $\{\mu_i , \xi_i \}_{i=1}^m$ of $\A^*$ as provided by Lemma \ref{SLADarboux}. From here we can proceed as follows in \cite[Page 8]{Shlomonotes}.
\end{proof}

\begin{definition}
    We define the ``counting operator'' $H:\Omega^*_\A(M) \ra \Omega^*_\A(M)$ as the map
    \begin{equation*}
        H= \sum_k (k-m)\pi_k,
    \end{equation*}
    where $\pi_k:\Omega^*_\A(M) \ra \Omega^k_\A(M)$ is the canonical projection.
\end{definition}
\begin{prop}\label{AlgebroidKahlerWeil}
    Given the operators $L,\Lambda$, and $H$, we have the following $\mathfrak{sl}_2$-commutation relations
    \begin{equation*}
       [L,\Lambda]=H, \quad [H,L]=2L, \quad [H,\Lambda]=-2\Lambda.
    \end{equation*}
\end{prop}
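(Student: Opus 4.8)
The plan is to reduce everything to Lemma \ref{KahlerWeil} together with the observation that $L$, $\Lambda$, and $H$ are all homogeneous operators with respect to the grading $\Omega^*_\A(M) = \bigoplus_{k=0}^{2m} \Omega^k_\A(M)$: the Lefschetz map $L$ raises form-degree by $2$, the dual map $\Lambda$ lowers it by $2$, and by definition $H$ acts on $\Omega^k_\A(M)$ as scaling by $k-m$. Since all three relations are equalities of linear operators on a finite direct sum, it suffices to verify each one on an arbitrary homogeneous element $\alpha \in \Omega^k_\A(M)$ for each $0 \le k \le 2m$, where the projections $\pi_j$ collapse and $H\alpha = (k-m)\alpha$.

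First I would dispatch $[L,\Lambda] = H$. For $\alpha \in \Omega^k_\A(M)$, Lemma \ref{KahlerWeil} gives $[L,\Lambda]\alpha = (k-m)\alpha$, and the right-hand side is exactly $H\alpha$ by the definition of the counting operator. Extending by linearity over the direct sum decomposition yields $[L,\Lambda] = H$ as operators on $\Omega^*_\A(M)$. Next, for $[H,L]$, take $\alpha \in \Omega^k_\A(M)$; then $L\alpha \in \Omega^{k+2}_\A(M)$, so $HL\alpha = (k+2-m)L\alpha$ while $LH\alpha = (k-m)L\alpha$, and subtracting gives $[H,L]\alpha = 2L\alpha$. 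Symmetrically, $\Lambda\alpha \in \Omega^{k-2}_\A(M)$ gives $H\Lambda\alpha = (k-2-m)\Lambda\alpha$ and $\Lambda H\alpha = (k-m)\Lambda\alpha$, so $[H,\Lambda]\alpha = -2\Lambda\alpha$. Again linearity over the grading promotes these to operator identities.

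There is essentially no serious obstacle here; the only point requiring a word of care is that one is working with the \emph{direct sum} $\bigoplus_{k=0}^{2m}\Omega^k_\A(M)$, which is legitimate because a Symplectic Lie Algebroid of fiber dimension $2m$ has $\Lambda^k\A^* = 0$ for $k > 2m$, so every form is a finite sum of homogeneous components and the degree-wise verifications assemble without convergence issues. Thus the substantive content of the proposition is entirely contained in Lemma \ref{KahlerWeil}, and the remaining two brackets are bookkeeping about how degree-shifting operators interact with the grading operator $H$.
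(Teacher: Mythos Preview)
Your proof is correct and follows essentially the same approach as the paper: invoke Lemma \ref{KahlerWeil} for $[L,\Lambda]=H$, then verify $[H,L]=2L$ and $[H,\Lambda]=-2\Lambda$ by a direct degree count on homogeneous forms $\alpha \in \Omega^k_\A(M)$. The paper's argument is slightly more terse but the computations are line-for-line the same.
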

\begin{proof}
    The first identity is the result of the lemma \ref{KahlerWeil}, and the remaining two follows from the following computations. Fix  $\alpha \in \Omega^k_\A(M)$, then we we have
    \begin{align*}
        [H,L]\alpha &= H(L\alpha) - L( (k-m)\alpha) \\
        &= (k+2-m)L\alpha - (k-m) L\alpha \\
        &= 2L\alpha,
    \end{align*}
    and
    \begin{align*}
        [H,\Lambda]\alpha &= H(\Lambda \alpha) - \Lambda (H\alpha) \\
        &=(k-2-m)\Lambda\alpha - (k-m)\Lambda \alpha \\
        &=-2\Lambda \alpha.
    \end{align*}
\end{proof}
These operators give the even part of the Lie Superalgebra $\mathfrak{g}$, and the Lie algebroid differential gives us the analogue of $d$, leaving us with one operator to define.
\begin{definition}
    The ``Koszul-Brylinski'' differential, denoted $d_\A^\star:\Omega^k_\A(M) \ra \Omega_\A^{k-1}(M)$, is defined as follows for any $\alpha \in \Omega^*_\A(M)$:
    \begin{equation*}
        d_\A^\star\alpha := [\Lambda, d_\A]\alpha.
    \end{equation*}
\end{definition}
This operator is of degree $-1$, and in the classical setting serves as the boundary map in \textit{Poisson Homology}. The fact $(d^\star_\A)^2=0$ is still unclear in our setting, but that will soon be remedied.
\begin{definition}
    We say that a form $\alpha \in \Omega^*_\A(M)$ is ``$\A$-symplectic harmonic'' or just ``harmonic'' if $d_\A \alpha=d_\A^\star\alpha=0$, and denote the set of harmonic $k$-forms as $\widehat{\Omega}_\A^k(M)$ and the set of all harmonic forms as $\widehat{\Omega}_\A^*(M)$. 
\end{definition}

\begin{prop}\label{CartanWedge2}
    Given $X,Y \in \Gamma(\wedge^2 \A)$, we have that $[\iota_X, [\iota_Y,d_\A]]=\iota_{[X,Y]}$ where $[\cdot,\cdot]$ is the Nijenhius-Schouten extension of the bracket on $\A$.
\end{prop}
\begin{proof}
    It suffices to check this locally on decomposable bivectors, so let $X=X_1 \wedge X_2$ and $Y=Y_1 \wedge Y_2$. We have that
    \begin{equation*}
        [\iota_Y,d_\A] =\iota_{Y_1}[\iota_{Y_2},d_\A]-[\iota_{Y_1},d_\A]\iota_{Y_2}
    \end{equation*}
    Applying $\iota_X$ and expanding, we have
    \begin{align*}
        [\iota_X,[\iota_Y,d_\A]] &= \underbrace{\iota_{X_1}[\iota_{X_2},\iota_{Y_1}[\iota_{Y_2},d_\A]]+[\iota_{X_1},\iota_{Y_1}[\iota_{Y_2},d_\A]]\iota_{X_2}}_{A} \\
        &-\underbrace{\left( \iota_{X_1}[\iota_{X_2},[\iota_{Y_1},d_\A]\iota_{Y_2}]+[\iota_{X_1},[\iota_{Y_1},d_\A]\iota_{Y_2}]\iota_{X_2}\right)}_B.
    \end{align*}
Note that using the cartan calculus identities $[\iota_a,\iota_b]=0$ and $[\iota_a,[\iota_b,d_\A]]=\iota_{[a,b]}$ for sections $a,b \in \Gamma(\A)$, we have that
\begin{equation*}
    [\iota_a,\iota_b[\iota_c,d_\A]]= \iota_b[\iota_a,[\iota_c,d_\A]]+[\iota_a,\iota_b][\iota_c,d_\A]= \iota_b \iota_{[a,c]}
\end{equation*}
Using this, we can simplify the above to
\begin{align*}
    A&=\iota_{X_1}\iota_{Y_1}\iota_{[X_2,Y_2]}+\iota_{Y_1}\iota_{[X_1,Y_2]}\iota_{X_2}, \\
    B&=\iota_{X_1}\iota_{[X_2,Y_1]}\iota_{Y_2} + \iota_{[X_1,Y_1]}\iota_{Y_2}\iota_{X_2}.
\end{align*}
Combining these and using that $\iota_a\iota_b = \iota_{a \wedge b}$, we get
\begin{align*}
    A-B &= \iota_{X_1\wedge Y_1 \wedge [X_2,Y_2]+Y_1 \wedge [X_1,Y_2]\wedge X_2 - X_1 \wedge [X_2,Y_1]\wedge Y_2 - [X_1,Y_1]\wedge Y_2 \wedge X_2} = \iota_{[X,Y]}
\end{align*}
\end{proof}
\begin{prop}\label{PoissonBracketVanish}
    Let $(\A \ra M,\rho,\omega)$ be a symplectic Lie algebroid, and let $\pi \in \Gamma(\bigwedge^2\A)$ be the dual bivector to $\omega$ as above. Then $[\pi,\pi]=0$.
\end{prop}
\begin{proof}
 See \cite[Proof of Theorem 2.14]{nest1999deformations} in the paragraph immediately following the list of 3 equivalences; there the bivector $\pi$ is written as $\varpi$.
\end{proof}

\begin{lemma}\label{AlgebroidCommutators}
    Let $(\A \ra M,\rho,\omega)$ be a symplectic Lie algebroid. Then we have the following commutation relations:
    \begin{enumerate}
        \item $[L,d_\A]=0$.
        \item $[H,d_\A]=d_A$.
        \item $[H,d_\A^\star]=-d_\A^\star$.
        \item $[\Lambda , d_\A^\star]=0$.
        \item $[L,d_\A^\star]=d_\A$.
        \item $[d_\A,d_\A^\star]=0$.
        \item $[d_\A^\star,d_\A^\star]=0$.
    \end{enumerate}
\end{lemma}
 \begin{proof}
        We prove these in order for $\alpha \in \Omega_{\A}^k(M)$:
        \begin{enumerate}
            \item Using that $\omega$ is closed we have
            \begin{align*}
                [L,d_\A]\alpha &= \omega \wedge d_\A\alpha - d_\A(\omega \wedge \alpha) \\
                &= \omega \wedge d_\A \alpha - (d_\A\omega \wedge \alpha  +\omega \wedge d_\A \alpha) \\
                &= 0.
            \end{align*}
            \item Using only degree considerations we have
            \begin{align*}
               [H,d_\A]\alpha = Hd_\A \alpha  -d_\A H\alpha = [(k+1)-n]d_\A \alpha - (k-n)d_\A \alpha = d_\A \alpha.
            \end{align*}
            \item Again using only degree considerations we have that
            \begin{equation*}
                [H,d_\A^\star]=Hd_\A^\star \alpha - d_\A^\star H \alpha = [(k-1)-n]d_\A^\star \alpha - (k-n)d_\A^\star \alpha = -d_\A^\star \alpha.
            \end{equation*}
            \item Using Proposition \ref{CartanWedge2}, we have that
            \begin{equation*}
               [\Lambda,d_\A^\star] =[\iota_\pi,[\iota_\pi,d_\A]]=\iota_{[\pi,\pi]}.
            \end{equation*}
            To show this vanishes, we must have that $[\pi,\pi]=0$, but this follows from Proposition \ref{PoissonBracketVanish}.
            \item Using the fact that $\omega_\A$ is $d_\A$-closed, and that $[L,d_\A]=0$, we have
            \begin{align*}
                [L,d_\A^\star]\alpha &= Ld_\A^\star \alpha - d_\A^\star L\alpha \\
                &=L\Lambda d_\A \alpha - Ld_\A \Lambda \alpha - \Lambda(\omega_\A \wedge d_\A \alpha) + d \Lambda (\omega_\A \wedge \alpha) \\
                &=[L,\Lambda]d_\A \alpha - d[L,\Lambda]\alpha \\
                &= [H,d_\A]\alpha \\
                &=d_\A\alpha.
            \end{align*}
            
            \item The graded Jacobi identity tells us that
            \begin{equation*}
                -[d_\A,[\Lambda,d_\A]]+[\Lambda,[d_\A,d_\A]]+ [d_\A,[d_\A,\Lambda]]=0.
            \end{equation*}
            From this we must have that
            \begin{equation*}
                -2[d_\A,d_\A^\star]=0.
            \end{equation*}
            
            \item Using items 4 and 6, along with the graded Jacobi identity we have that
            \begin{align*}
                [d_\A^\star,d_\A^\star]&=[d_\A^\star,[\Lambda, d_\A]] \\
                &=[\Lambda, [d_\A,d_\A^\star]]+[d_\A, [d_\A^\star, \Lambda]] \\
                &=0.
            \end{align*}
        \end{enumerate}
    \end{proof}
\begin{prop}
    The space of $\A$-forms of a symplectic Lie algebroid of rank $2m$ with the operators $L,\Lambda, H, d_\A, d_\A^\star$ is an object of $\mathscr{C}_m$ under the assignment
\begin{equation*}
    e \mapsto L, \quad f \mapsto \Lambda, \quad h \mapsto H, \quad d \mapsto d_\A, \quad \delta \mapsto d_\A^\star.
\end{equation*}
\end{prop}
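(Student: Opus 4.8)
The statement bundles two claims: that the stated assignment makes $\Omega^*_\A(M)$ into a $\mathfrak{g}$-module, and that the resulting module lies in $\mathscr{C}_m$. The plan is to dispatch the first claim almost entirely by invoking the commutation relations already established, and the second by unwinding the definitions of $H$ and of $\mathscr{C}_m$. First I would fix the $\Z/2$-grading on $\Omega^*_\A(M)$ to be the even/odd splitting by form degree, so that $L,\Lambda,H$ are even and $d_\A,d_\A^\star$ are odd, matching the parities of $e,f,h,d,\delta$; with this choice every relation below is between operators of homogeneous parity, and the graded commutators appearing in the definition of $\mathfrak{g}$ translate into the correct (anti)commutators of operators.

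Next I would check the relations one block at a time. The $\mathfrak{sl}_2$ block $[e,f]=h$, $[e,h]=-2e$, $[f,h]=2f$ is exactly Proposition \ref{AlgebroidKahlerWeil}, once $[H,L]=2L$ is rewritten as $[L,H]=-2L$ and $[H,\Lambda]=-2\Lambda$ as $[\Lambda,H]=2\Lambda$. The six mixed relations $[e,d]=0$, $[f,d]=\delta$, $[h,d]=d$, $[e,\delta]=d$, $[f,\delta]=0$, $[h,\delta]=-\delta$ are exactly the six identities of Lemma \ref{AlgebroidCommutators}, with no sign changes needed. That leaves the three relations among the odd generators: as graded commutators of odd operators, $[d,d]=0$, $[d,\delta]=0$, $[\delta,\delta]=0$ read $2d_\A^2=0$, $d_\A d_\A^\star+d_\A^\star d_\A=0$, and $2(d_\A^\star)^2=0$. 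The outer two are the standard facts $d_\A^2=0$ and $(d_\A^\star)^2=0$ recorded earlier (the latter follows from $\star\star=\mathrm{Id}$ together with $d_\A^2=0$ and the degree-dependent sign in the definition of $d_\A^\star$). For the anticommutator I would use relation (6) of Lemma \ref{AlgebroidCommutators} to write $d_\A^\star=\Lambda d_\A-d_\A\Lambda$ and expand:
\begin{equation*}
d_\A d_\A^\star+d_\A^\star d_\A=d_\A\Lambda d_\A-d_\A^2\Lambda+\Lambda d_\A^2-d_\A\Lambda d_\A=0,
\end{equation*}
using $d_\A^2=0$; this is the familiar fact that $d_\A$ anticommutes with its symplectic adjoint. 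With all defining relations verified, $\Omega^*_\A(M)$ is a $\mathfrak{g}$-module.

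For membership in $\mathscr{C}_m$ I would argue as follows. Since $\A$ has fiber dimension $2m$, one has $\Omega^k_\A(M)=0$ for $k<0$ or $k>2m$, hence $\Omega^*_\A(M)=\bigoplus_{k=0}^{2m}\Omega^k_\A(M)$. By the definition of the counting operator, $H$ acts on $\Omega^k_\A(M)$ as multiplication by the integer $k-m$, so this is precisely a decomposition into $H$-eigenspaces with eigenvalues $k-m$ for $0\leq k\leq 2m$, i.e.\ the integers $-m,-m+1,\dots,m$. Thus $H$ acts diagonalizably, integrally, and with finitely many eigenvalues, all lying between $-m$ and $m$, which is exactly the defining condition of $\mathscr{C}_m$; concretely the eigenspace $V_i$ of the resulting object is $\Omega^{i+m}_\A(M)$.

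I do not expect a genuine obstacle: the bulk of the work is already contained in Proposition \ref{AlgebroidKahlerWeil} and Lemma \ref{AlgebroidCommutators}, and the only content not literally quoted from an earlier result is the one-line anticommutation $d_\A d_\A^\star+d_\A^\star d_\A=0$ above. The only point demanding mild care is the sign and ordering bookkeeping when matching the operator commutators of those earlier results to the graded-commutator conventions in the definition of $\mathfrak{g}$; carrying that out carefully, once, is the whole proof.
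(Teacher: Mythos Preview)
Your proposal is correct and follows the same approach as the paper, which simply cites Proposition \ref{AlgebroidKahlerWeil} and Lemma \ref{AlgebroidCommutators} in a one-line proof. Your version is more complete: you also verify the three odd-odd relations (in particular the anticommutation $d_\A d_\A^\star+d_\A^\star d_\A=0$) and the $\mathscr{C}_m$ membership condition on the $H$-eigenvalues, both of which the paper leaves implicit.
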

\begin{proof}
    Ten of the twelve commutation relations follow Proposition \ref{AlgebroidKahlerWeil} and Lemma $\ref{AlgebroidCommutators}$. The last two come from the definition of $d_\A^*$, and the fact that $d_\A^2=0$.
\end{proof}

\begin{theorem}\label{AlgebroidQuad}
     Suppose $(\A \ra M,\rho, \omega)$ is a symplectic Lie algebroid of rank $2m$. Then the following statements are equivalent:
     \begin{enumerate}
         \item The inclusion $(\ker(d_\A^\star),d_\A) \hookrightarrow (\Omega_\A^*(M),d)$ is a quasi-isomorphism.
         \item For all $0 \leq k \leq m$, the map $[L]^{k}:H_\A^{m-k}(M) \ra H_\A^{m+k}(M)$ is an isomorphism.
         \item $(\A \ra M, \rho, \omega)$ satisfies the $\A$-$d d^\star$-lemma.
         \item $(\A \ra M, \rho, \omega)$ satisfies the $\A$-$\dee \deebar$-lemma.
     \end{enumerate}
     where the first three statements hold equivalence weakly, i.e. in degree as in the statement of Theorem $\ref{WeakTheorem}$.
\end{theorem}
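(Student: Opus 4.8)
The plan is to deduce Theorem~\ref{AlgebroidQuad} as a direct corollary of the purely algebraic Theorems~\ref{QuadEquiv} and~\ref{WeakTheorem}, using the proposition just established that $\bigl(\Omega_\A^\bullet(M),L,\Lambda,H,d_\A,d_\A^\star\bigr)$ is an object of $\mathscr{C}_m$ under the assignment $e\mapsto L$, $f\mapsto\Lambda$, $h\mapsto H$, $d\mapsto d_\A$, $\delta\mapsto d_\A^\star$. The only real content of the argument is a bookkeeping step translating the four geometric conditions, under this dictionary, into the four conditions appearing in Theorem~\ref{QuadEquiv}. The first thing I would record is the grading translation: since $H$ acts on $\Omega^k_\A(M)$ as the scalar $k-m$, the weight decomposition of $\Omega_\A^\bullet(M)$ as a $\mathscr{C}_m$-module is $V_i=\Omega^{m+i}_\A(M)$ for $-m\le i\le m$. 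Consequently the module cohomology in internal degree $i$ is $H^i(V,d_\A)=H^{m+i}_\A(M)$, and the abstract Lefschetz map $[e^k]\colon H^{-k}(V,d_\A)\to H^{k}(V,d_\A)$ is literally $[L]^k\colon H^{m-k}_\A(M)\to H^{m+k}_\A(M)$, because $e=L=\omega\wedge-$. As $k$ runs over $0,\dots,m$ this exhausts all internal degrees in which $H^{\pm i}(V,d_\A)$ can be nonzero (weights outside $[-m,m]$ are zero by $\mathscr{C}_m$-membership), so ``for all $0\le k\le m$'' and ``for all $i$'' assert the same thing.

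Next I would match the remaining conditions on the nose. Condition~(1) of the theorem, that $(\ker d_\A^\star,d_\A)\hookrightarrow(\Omega_\A^\bullet(M),d_\A)$ is a quasi-isomorphism, is exactly condition~(3) of Theorem~\ref{QuadEquiv} since $\delta=d_\A^\star$. Condition~(2) is condition~(2) of Theorem~\ref{QuadEquiv} by the grading translation above. Condition~(3), the $\A$-$dd^\star$-lemma, unwinds to $\im(d_\A)\cap\ker(d_\A^\star)=\im(d_\A^\star)\cap\ker(d_\A)=\im(d_\A d_\A^\star)$, which is precisely the $d\delta$-lemma for $V=\Omega_\A^\bullet(M)$. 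Finally condition~(4), the $\A$-$\dee\deebar$-lemma, is by construction the $\dee\deebar$-lemma for $V$: the operators $\dee_\A,\deebar_\A$ on primitive $\A$-forms are defined by the same two-term primitive-decomposition recipe $d_\A w_k=\dee_\A w_k+L\,\deebar_\A w_k$ that defines the abstract $\dee,\deebar$ in Section~2, and the primitive pieces $\Prim_k$ for $\Omega_\A^\bullet(M)$ are $\ker(\Lambda)\cap\Omega^{m+k}_\A(M)$, so the three displayed equalities defining the $\A$-$\dee\deebar$-lemma are the same three equalities defining the module $\dee\deebar$-lemma. Theorem~\ref{QuadEquiv} then yields the equivalence of all four.

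For the degreewise (``weak'') part of the statement I would run the identical dictionary but cite Theorem~\ref{WeakTheorem} in place of Theorem~\ref{QuadEquiv}, noting only that a degree-$s$ window for the module $V$ corresponds to the degree-$(m+s)$ window for $\Omega_\A^\bullet(M)$. The step I expect to be the only point requiring care --- the ``obstacle'', such as it is --- is confirming that $\Omega_\A^\bullet(M)$ genuinely lies in $\mathscr{C}_m$ (i.e. that $H$ is diagonalizable with integer eigenvalues $k-m\in\{-m,\dots,m\}$, and that the $\mathfrak{g}$-relations among $L,\Lambda,H,d_\A,d_\A^\star$ hold, which is exactly Proposition~\ref{AlgebroidKahlerWeil} together with Lemma~\ref{AlgebroidCommutators}), and then tracking signs and the degree shift carefully enough that the geometric lemma statements line up verbatim with the algebraic ones. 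No new analysis or cohomological input is needed beyond what has already been front-loaded into Section~2; the theorem is essentially a change-of-notation corollary of Theorem~\ref{QuadEquiv}.
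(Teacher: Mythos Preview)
Your proposal is correct and is exactly the approach the paper takes: the paper's own proof is the single line ``This is the statement of Theorem~\ref{QuadEquiv} under the representation,'' and what you have written is simply an explicit unpacking of that dictionary (the grading shift $V_i=\Omega_\A^{m+i}(M)$ and the identification of each condition), together with the citation of Proposition~\ref{AlgebroidKahlerWeil} and Lemma~\ref{AlgebroidCommutators} to justify $\mathscr{C}_m$-membership. There is no substantive difference.
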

\begin{proof}
    This is the statement of Theorem \ref{QuadEquiv} under the representation.
\end{proof}
\begin{definition}
    We say that a symplectic Lie algebroid $(\A,\rho, \omega)$ is a ``Lefschetz algebroid'' if any of the four conditions in the preceding theorem hold.
\end{definition}
\begin{corollary}
    For any Lefschetz algebroid $(\A \ra M, \rho, \omega)$, the space $H^*_\A(M)$ is an $\mathfrak{sl}_2(\R)$-module.
\end{corollary}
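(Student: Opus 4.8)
The plan is to obtain this statement as an immediate transcription of the purely algebraic Corollary~\ref{lefschetzgivessl2coho} through the representation constructed in the proposition preceding Theorem~\ref{AlgebroidQuad}. Set $V := \Omega_\A^*(M)$ with the operators $L,\Lambda,H,d_\A,d_\A^\star$ assigned to $e,f,h,d,\delta$ respectively. That proposition already records $V\in\mathrm{Ob}(\mathscr{C}_m)$: the weight decomposition is the form-degree grading shifted by $m$, i.e.\ $V_i=\Omega_\A^{\,i+m}(M)$ with $H$ acting on $V_i$ as scaling by $i$, so $H$ is diagonalizable with finitely many integer eigenvalues lying in $[-m,m]$. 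Under this dictionary the complex $(V,d)$ of the module is exactly the deRham--Chevalley--Eilenberg complex $(\Omega_\A^*(M),d_\A)$, whence $H^i(V,d)=H_\A^{\,i+m}(M)$; in particular $H^*(V,d)$ and $H_\A^*(M)$ coincide as graded vector spaces.

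Next I would unwind the definition of ``Lefschetz Algebroid'': it asserts that one, hence all, of the four equivalent conditions of Theorem~\ref{AlgebroidQuad} holds for $(\A\ra M,\rho,\omega)$. Since Theorem~\ref{AlgebroidQuad} is nothing but Theorem~\ref{QuadEquiv} read through the representation above, these are precisely the conditions singling out $V=\Omega_\A^*(M)$ as a Lefschetz module in $\mathscr{C}_m$ in the sense of the definition following Theorem~\ref{TripleEquiv}. Hence $V$ is a Lefschetz module, and Corollary~\ref{lefschetzgivessl2coho} applies verbatim to give that $H^*(V,d)$ is an $\mathfrak{sl}_2(\R)$-module; translating back, $H_\A^*(M)$ is an $\mathfrak{sl}_2(\R)$-module with action induced by $L$, $\Lambda$, and $H$. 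Concretely, $L$ and $H$ descend to $\ker(d_\A)/\im(d_\A)$ for free because $[L,d_\A]=0$ and $[H,d_\A]=d_\A$ (Lemma~\ref{AlgebroidCommutators}), while the descent of $\Lambda$ is exactly where the Lefschetz hypothesis is used: for $\A$-exact $\phi$ the element $\Lambda\,d_\A\phi$ differs from $d_\A\Lambda\phi$ by $\pm d_\A^\star\phi$, and $d_\A^\star\phi\in\im(d_\A^\star)\cap\ker(d_\A)=\im(d_\A d_\A^\star)$ by the $d d^\star$-lemma, so it represents the zero class.

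There is no real obstacle here beyond bookkeeping: all the mathematical content lives at the level of $\mathscr{C}_n$ and was established in Section~2, so the only things to check are that $\Omega_\A^*(M)$ genuinely belongs to $\mathscr{C}_m$ (done in the cited proposition, once one notes the $k\mapsto k-m$ re-indexing of weights) and that the geometric and algebraic meanings of ``Lefschetz module'' and of ``$d$-cohomology'' agree under the representation. If a self-contained proof were preferred, one could simply reproduce the short computation from the proof of Corollary~\ref{lefschetzgivessl2coho}, with $d$ replaced by $d_\A$, $\delta$ by $d_\A^\star$, $e$ by $L$, and $f$ by $\Lambda$.
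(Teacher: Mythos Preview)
Your proposal is correct and follows exactly the paper's approach: the paper's proof is the single line ``This follows from Corollary~\ref{lefschetzgivessl2coho},'' and you have simply spelled out the translation through the representation $\Omega_\A^*(M)\in\mathscr{C}_m$ that makes that citation work. The additional concrete verification you give for the descent of $\Lambda$ is a faithful unpacking of the argument in Corollary~\ref{lefschetzgivessl2coho} and not a departure from it.
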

\begin{proof}
    This follows from Corollary \ref{lefschetzgivessl2coho}.
\end{proof}
\begin{corollary}
The deRham-Chevalley-Eilenberg complex of a Lefschetz algebroid is formal.
\end{corollary}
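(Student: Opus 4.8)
The notion of formality in play is that of commutative differential graded algebras (CDGAs): the de Rham--Chevalley--Eilenberg algebra $(\Omega^*_\A(M), \wedge, d_\A)$ is \emph{formal} if it is joined by a zig-zag of CDGA quasi-isomorphisms to its cohomology $(H^*_\A(M), 0)$. The plan is to run the Deligne--Griffiths--Morgan--Sullivan argument that derives formality from the $\partial\bar\partial$-lemma, with the $\A$-$dd^\star$-lemma of Theorem~\ref{AlgebroidQuad} in the role of the $\partial\bar\partial$-lemma; recall that a Lefschetz algebroid satisfies $\im d_\A \cap \ker d_\A^\star = \im d_\A^\star \cap \ker d_\A = \im(d_\A d_\A^\star)$, with $d_\A$ and $d_\A^\star$ anticommuting and $(d_\A^\star)^2 = 0$.

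First I would assemble the zig-zag at the level of chain complexes. Condition~(1) of Theorem~\ref{AlgebroidQuad} already gives that the inclusion $(\ker d_\A^\star, d_\A) \hookrightarrow (\Omega^*_\A(M), d_\A)$ is a quasi-isomorphism. Since $d_\A$ anticommutes with $d_\A^\star$ it preserves $\ker d_\A^\star$ and sends $\im d_\A^\star$ into itself, so it descends to an operator $\bar d_\A$ on $H^*_{d_\A^\star} := \ker d_\A^\star / \im d_\A^\star$. Using the $dd^\star$-lemma exactly as in the complex-analytic case one checks that the projection $(\ker d_\A^\star, d_\A) \twoheadrightarrow (H^*_{d_\A^\star}, \bar d_\A)$ is a quasi-isomorphism and that $\bar d_\A \equiv 0$: for $w \in \ker d_\A^\star$ one has $d_\A w \in \im d_\A \cap \ker d_\A^\star = \im(d_\A d_\A^\star) \subseteq \im d_\A^\star$, so $[d_\A w] = 0$. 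Composing, $(\Omega^*_\A(M), d_\A)$ is quasi-isomorphic, as a chain complex, to $(H^*_{d_\A^\star}, 0) \cong (H^*_\A(M), 0)$; all of this is routine granted the earlier results.

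The substantive point is to promote this to a zig-zag of \emph{CDGA} morphisms, and here lies the main obstacle: the Brylinski differential $d_\A^\star$ is a second-order (Batalin--Vilkovisky-type) operator, not a derivation of $\wedge$, so a priori $\ker d_\A^\star$ is not a subalgebra and $\im d_\A^\star$ is not an ideal, and the two maps above need not be multiplicative. I see two routes. The first is to reroute through the honest sub-CDGA $(\ker d_\A, 0)$ and quotient CDGA $\ker d_\A / \im d_\A = H^*_\A(M)$, reconciling the two descriptions by means of the $\mathfrak{sl}_2$-module structure on $H^*_\A(M)$ supplied by the preceding corollary together with the primitive decomposition, splitting weight by weight; this amounts to showing that all higher Massey products on $H^*_\A(M)$ vanish, which by Kadeishvili's theorem is precisely formality, and is the symplectic counterpart of the DGMS vanishing theorem. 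The second, shorter, route is to note that every operator in sight ($L$, $\Lambda$, $H$, $d_\A$, $d_\A^\star$) is $C^\infty(M)$-linear and built from the fibrewise symplectic linear algebra underlying Lemma~\ref{SLADarboux}, so that the known formality of Hard-Lefschetz symplectic manifolds (\cite{merkulov1998formality}) transfers with $TM$ replaced by $\A$ essentially verbatim, the only genuine task then being to confirm that that argument is insensitive to the replacement. I expect this reconciliation step --- in either incarnation --- to be where essentially all the work lies.
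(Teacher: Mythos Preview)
The paper's proof is a one-line citation to Manin, so you are unpacking more than the paper does. Your chain-level zig-zag through $\ker d_\A^\star$ is exactly what underlies that citation, and you have correctly isolated the crux: $d_\A^\star$ is a second-order BV operator, not a derivation, so $\ker d_\A^\star$ is not a wedge-subalgebra and the zig-zag is not a diagram of CDGA maps. Neither of your proposed routes repairs this. Route one is ill-posed as written --- the inclusion $(\ker d_\A,0)\hookrightarrow(\Omega^*_\A(M),d_\A)$ is never a quasi-isomorphism unless $d_\A$ vanishes --- and Route two appeals to Merkulov, whose theorem is not a CDGA-formality statement. More to the point, the CDGA reading of the corollary is \emph{false} already for $\A=TM$: there exist compact symplectic manifolds satisfying Hard Lefschetz, equivalently the $d\delta$-lemma, which are not formal in the rational-homotopy sense and carry nontrivial Massey products (Cavalcanti; see also Fern\'andez--Mu\~noz). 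So the step you flag as ``where essentially all the work lies'' is not merely hard; it is impossible in that category.

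What the paper intends, following the cited sources \cite{manin1998constructions} and \cite{merkulov1998formality}, is formality of $(\Omega^*_\A(M),\wedge,d_\A,d_\A^\star)$ as a differential BV (equivalently dGBV) algebra: one retains $d_\A^\star$ as part of the structure, and the failure of $d_\A^\star$ to be a derivation is absorbed into the odd bracket it generates. In that category your zig-zag \emph{is} a diagram of structure-preserving quasi-isomorphisms, and the conclusion follows directly from the $d\delta$-lemma --- which is why the paper can dispose of it with a bare citation. Your diagnosis of the obstacle is entirely correct; the resolution is not to overcome it but to read ``formal'' in the category Manin and Merkulov are working in.
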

\begin{proof}
    Condition 1 in Theorem \ref{AlgebroidQuad} shows that a Lefschetz Lie algebroid satisfies the criteria for \cite[Lemma 5.4.1]{manin1998constructions}, from which the result follows.
\end{proof}
We also have the weak version of this theorem:
\begin{theorem}\label{AlgebroidWeak}
     Suppose $(\A \ra M,\rho, \omega)$ is a symplectic Lie algebroid of rank $2m$, and $0 \leq s \leq m$. Then the following statements are equivalent:
     \begin{enumerate}
         \item The inclusion $(\ker(d_\A^{\star}),d_\A) \hookrightarrow (\Omega_\A^\bullet(M),d)$ is a quasi-isomorphism in degrees $\leq s$ or $\geq s+m$.
         \item The map $[L]^k:H_\A^{m-k}(M) \ra H_\A^{m+k}(M)$ is an isomorphism for $k \geq s$.
         \item The $\A$-$d_\A d_\A^\star$-lemma holds up to degree $s$.
     \end{enumerate}
\end{theorem}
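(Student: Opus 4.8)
The proof is by transport of structure, exactly as in Theorem~\ref{AlgebroidQuad}: the mathematical content is already contained in the abstract Theorem~\ref{WeakTheorem}, and what remains is to match the three geometric conditions with their algebraic counterparts under the module structure. Recall from the Proposition preceding Theorem~\ref{AlgebroidQuad} that $(\Omega_\A^\bullet(M); L,\Lambda,H,d_\A,d_\A^\star)$ is an object of $\mathscr{C}_m$ via $e\mapsto L$, $f\mapsto\Lambda$, $h\mapsto H$, $d\mapsto d_\A$, $\delta\mapsto d_\A^\star$. Since $H=\sum_k(k-m)\pi_k$, the weight-$i$ summand of this module is precisely $\Omega_\A^{i+m}(M)$, so the abstract $d$-cohomology satisfies $H^i(\Omega_\A^\bullet(M),d)=H_\A^{i+m}(M)$, and $\ker(d_\A^\star)$ in form-degree $k$ is the weight-$(k-m)$ part of $\ker(\delta)$. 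The one bookkeeping ingredient is therefore just this shift $k\leftrightarrow i=k-m$ between form-degree and $H$-weight.

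With this dictionary in hand I would verify the three translations in turn. Condition~(2), that $[L]^k\colon H_\A^{m-k}(M)\to H_\A^{m+k}(M)$ is an isomorphism for $k\ge s$, is literally condition~(2) of Theorem~\ref{WeakTheorem} for $V=\Omega_\A^\bullet(M)$, $n=m$: the map $[e^k]\colon H^{-k}(V,d)\to H^k(V,d)$ becomes $[L]^k\colon H_\A^{m-k}(M)\to H_\A^{m+k}(M)$ after the weight shift, and ``$|i|\ge s$'' restricted to $i\ge 0$ reads ``$i\ge s$''. Condition~(1), the quasi-isomorphism statement, is condition~(3) of Theorem~\ref{WeakTheorem}: ``quasi-isomorphism in weights $|i|\ge s$'' says exactly that the inclusion induces an isomorphism on cohomology in form-degrees $k$ with $|k-m|\ge s$, i.e.\ $k\le m-s$ or $k\ge m+s$, which one then identifies with the two-tailed degree range of~(1) after the shift. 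Condition~(3), the $\A$-$d_\A d_\A^\star$-lemma up to degree $s$, is condition~(1) of Theorem~\ref{WeakTheorem} read through the same dictionary; here I would spell out carefully that the ``up to degree $s$'' clause is taken with the weight convention — the equalities $\im d_\A\cap\ker d_\A^\star=\im(d_\A d_\A^\star)=\im d_\A^\star\cap\ker d_\A$ are required on $\Omega_\A^k(M)$ for $k\le m-s$, with the one-sided boundary clause on $\Omega_\A^{m-s+1}(M)$ — and then the $\star$-duality built into that definition automatically propagates the identities to the top tail $k\ge m+s$, which is why a single one-sided threshold suffices.

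Having matched all three conditions, the theorem follows by applying Theorem~\ref{WeakTheorem} to $V=\Omega_\A^\bullet(M)\in\mathrm{Ob}(\mathscr{C}_m)$. There is no new analytic or algebraic content, and the step I expect to be the main obstacle is purely one of reconciling conventions: in the abstract development the natural range for the ``$d\delta$-lemma up to degree $s$'' is a negative weight $-n\le s\le -1$, whereas the geometric statement wants $0\le s\le m$ with form-degrees running from $0$ to $2m$, so one must fix once and for all that the geometric parameter $s$ is the \emph{weight} threshold (equivalently, the Lefschetz-power threshold) and translate the abstract degree conditions accordingly. Doing this consistently across all three conditions, and verifying that the $\star$-duality really does close up the one-sided condition into the symmetric two-tailed statement, is the substance of the argument.
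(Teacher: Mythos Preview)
Your proposal is correct and follows exactly the paper's approach: the paper's proof is the single sentence ``This is the statement of Theorem~\ref{WeakTheorem} under the representation,'' and you have simply spelled out the dictionary (the weight shift $i=k-m$, the matching of each condition, and the convention reconciliation) that the paper leaves implicit. Your more detailed bookkeeping is a faithful expansion of that one-liner, not a different argument.
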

\begin{proof}
    This is the statement of Theorem \ref{WeakTheorem} under the representation.
\end{proof}
\begin{remark}
    An alternative method of constructing $d_\A^\star$ is to define the symplectic star $\star$ analogously to the classical setting, and then define $d_\A^\star:=(-1)^{k+1}\star d_\A \star$. In this case, the fact that $d_\A^\star = [\Lambda,d]$ becomes somewhat less clear to prove, as the classical method relies on tools that don't immediately generalize. Defining $d_\A^\star$ as the commutator allows us to show that the space of forms is indeed an object of $\mathscr{C}_m$, and there we constructed the $\star$ map without appeal to any underlying geometry, giving us both ways in a more straightforward manner.
\end{remark}
\subsection{Cohomology with Coefficients}
This whole story works as well for Lie algebroid cohomology with coefficients in a representation of the Lie algebroid. This extension was originally pointed out in \cite{YAN1996143} for traditional vector bundles with connections. 
\begin{definition}
   Let $\A \ra M$ be a Lie algebroid over a smooth manifold $M$. An ``$\A$-connection'' on a vector bundle $E \ra M$ is an $\R$-bilinear map
   \begin{align*}
       {}^\A\nabla:\Gamma \A \times \Gamma E &\ra \Gamma E \\
       (X ,s ) &\mapsto \nabla_X s
   \end{align*}
   which is $C^\infty(M)$ linear in the first entry and satisfies
   \begin{equation*}
       {}^\A\nabla_X (fs) = f {}^\A\nabla_X s + \mathcal{L}_{\rho(X)}(f) s   .
       \end{equation*}
     The ``curvature'' of the connection ${}^\A \nabla$ is the $\End(E)$-valued algebroid $2$-form
     \begin{equation*}
         R_\nabla(X , Y) = {}^\A \nabla_X {}^\A \nabla_Y - {}^\A \nabla_Y {}^\A \nabla _X - {}^\A \nabla_{[X,Y]}.
     \end{equation*}
     We say that ${}^\A \nabla$ is ``flat'' if $ R_ \nabla \equiv 0$.
\end{definition}
\begin{definition}
    Let $\A \ra M$ be a Lie algebroid over a smooth manifold. A ``representation'' of $\A$ is a vector bundle $E$ equipped with a flat $\A$-connection.
\end{definition}

\begin{definition}
    Let $\A \ra M$ be a Lie algebroid and $E$ be a representation of $\A$. Then the space of ``Algebroid $k$-forms with coefficients in $E$'' is defined as
    \begin{equation*}
        \Omega_\A^k(M,E) = \Gamma(\bigwedge^k \A^* \otimes E),
    \end{equation*}
    which assemble into differential graded module:
    \begin{equation*}
        \Omega_\A^*(M,E) = \bigoplus_{k \geq 0} \Omega^k_\A(M,E).
    \end{equation*}
    This caries a differential ${}^\A \nabla:\Omega_\A^k(M,E) \ra \Omega_\A^{k+1}(M,E)$ given by the induced covariant derivative, i.e.  we extend ${}^\A \nabla:\Omega_\A^0(M,E) \ra \Omega_\A^1(M,E)$ via enforcing
\begin{equation*}
    {}^\A\nabla (\alpha \otimes s) :=(d_\A \alpha) \otimes s + (-1)^{|\alpha|}\alpha \otimes ({}^\A \nabla s),
\end{equation*}
for $\alpha \otimes s \in \Omega^k_A(M,E)$. 
Since the connection is flat, this is a cochain complex.
\end{definition}

\begin{prop}
    Let $(\A, \rho, \omega)$ be a symplectic Lie algebroid of rank $2m$ and $E$ be a representation of $\A$. Define operators:
    \begin{align*}
    L_\nabla:\Omega^k_\A(M,E) &\ra \Omega_\A^{k+2}(M,E) \\
    \alpha \otimes e &\mapsto (\omega \wedge \alpha) \otimes e = (L\alpha)\otimes e, \\
    \Lambda_\nabla :\Omega_\A^k(M,E) &\ra \Omega_\A^{k-2}(M,E) \\
    \alpha \otimes e &\mapsto (\iota_{\omega^{-1}}\alpha) \otimes e = (\Lambda \alpha) \otimes e,\\
    H_\nabla : \Omega_\A^k(M,E) &\ra \Omega_\A^k(M,E) \\
    \alpha \otimes e &\mapsto ((k-m)\alpha )\otimes e  = (H \alpha) \otimes e,\\
    {}^\A \nabla^*:\Omega_\A^k(M,E) &\ra \Omega_\A^{k-1}(M,E) \\
    \alpha \otimes e &\mapsto [\Lambda_\nabla, {}^\A \nabla] (\alpha \otimes e).
    \end{align*}
These, together with ${}^\A \nabla$, satisfy the relations of $\mathfrak{g}$, and as such the space of forms with coefficients are an object of $\mathscr{C}_n$.
\end{prop}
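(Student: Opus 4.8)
The plan is to verify the twelve graded commutator relations of $\mathfrak{g}$ under the assignment $e\mapsto L_\nabla$, $f\mapsto\Lambda_\nabla$, $h\mapsto H_\nabla$, $d\mapsto{}^\A\nabla$, $\delta\mapsto{}^\A\nabla^*$, reducing each in turn to Proposition \ref{AlgebroidKahlerWeil}, Lemma \ref{AlgebroidCommutators}, flatness of ${}^\A\nabla$, or the graded Jacobi identity. Membership in $\mathscr{C}_n$ (with $n=m$) is then immediate: $H_\nabla$ acts on $\Omega^k_\A(M,E)$ as the scalar $k-m$ with $0\le k\le 2m$, so it is diagonalizable with integer eigenvalues in $\{-m,\dots,m\}$.

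First I would dispose of the easy relations. The three even ones, $[L_\nabla,\Lambda_\nabla]=H_\nabla$, $[H_\nabla,L_\nabla]=2L_\nabla$, $[H_\nabla,\Lambda_\nabla]=-2\Lambda_\nabla$, hold because $L_\nabla,\Lambda_\nabla,H_\nabla$ act as $L,\Lambda,H$ on the $\A$-form factor and as the identity on $E$, so they follow from Proposition \ref{AlgebroidKahlerWeil} pointwise. Flatness gives $[{}^\A\nabla,{}^\A\nabla]=2({}^\A\nabla)^2=0$, and $[H_\nabla,{}^\A\nabla]={}^\A\nabla$, $[H_\nabla,{}^\A\nabla^*]=-{}^\A\nabla^*$ are pure degree counts as in Lemma \ref{AlgebroidCommutators}(2)--(3). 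For the relations pairing an even operator with ${}^\A\nabla$, I would fix a local frame of $E$ and write ${}^\A\nabla=d_\A^{\mathrm{triv}}+\Theta$, where $d_\A^{\mathrm{triv}}$ applies $d_\A$ componentwise and $\Theta$ is the $C^\infty(M)$-linear operator of wedging by the matrix of $\A$-connection one-forms. Then $[L_\nabla,{}^\A\nabla]=[L_\nabla,d_\A^{\mathrm{triv}}]+[L_\nabla,\Theta]$ vanishes, since $[L,d_\A]=0$ componentwise by Lemma \ref{AlgebroidCommutators}(1) (as $\omega$ is $d_\A$-closed) and wedging by the $2$-form $\omega$ commutes with wedging by $1$-forms. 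The relation $[\Lambda_\nabla,{}^\A\nabla]={}^\A\nabla^*$ is the definition of ${}^\A\nabla^*$, and $[L_\nabla,{}^\A\nabla^*]={}^\A\nabla$ then follows from the graded Jacobi identity applied to $L_\nabla,\Lambda_\nabla,{}^\A\nabla$ exactly as $[L,d_\A^\star]=d_\A$ was derived in Lemma \ref{AlgebroidCommutators}(5), using the three relations just established. Once $[\Lambda_\nabla,{}^\A\nabla^*]=0$ is in hand, the remaining odd-odd relations $[{}^\A\nabla,{}^\A\nabla^*]=0$ and $[{}^\A\nabla^*,{}^\A\nabla^*]=0$ come out of the graded Jacobi identity from $({}^\A\nabla)^2=0$, ${}^\A\nabla^*=[\Lambda_\nabla,{}^\A\nabla]$, and $[\Lambda_\nabla,{}^\A\nabla^*]=0$, precisely as $[\delta,\delta]=0$ is obtained in the coefficient-free picture.

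The one relation with real content — and the step I expect to be the main obstacle — is $[\Lambda_\nabla,{}^\A\nabla^*]=0$. In the same local frame, ${}^\A\nabla^*=[\Lambda_\nabla,{}^\A\nabla]=[\Lambda_\nabla,d_\A^{\mathrm{triv}}]+[\Lambda_\nabla,\Theta]$, where the first summand is $d_\A^\star$ applied componentwise (Lemma \ref{AlgebroidCommutators}(6)) and hence commutes with $\Lambda_\nabla$ by Lemma \ref{AlgebroidCommutators}(4), while $[\Lambda_\nabla,\Theta]$ acts componentwise as a sum of interior multiplications by sections of $\A$, on account of the standard contraction identity $[\Lambda,e_\theta]=\iota_{Y_\theta}$ for a $1$-form $\theta$ (with $Y_\theta\in\Gamma(\A)$ determined $C^\infty(M)$-linearly by $\theta$ via $\omega$). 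Since $\Lambda=\iota_\lambda$ commutes with every $\iota_X$, this forces $[\Lambda_\nabla,[\Lambda_\nabla,\Theta]]=0$, and as all the operators involved are local, vanishing on each trivializing chart gives $[\Lambda_\nabla,{}^\A\nabla^*]=0$ globally. An equivalent route, if one prefers to avoid local frames, is to set $\star_E(\alpha\otimes s)=(\star\alpha)\otimes s$, prove ${}^\A\nabla^*=(-1)^{k+1}\star_E{}^\A\nabla\star_E$ on $\Omega^k_\A(M,E)$ by repeating the Weil-identity computation with the $E$-factor carried along, and then read off $[\Lambda_\nabla,{}^\A\nabla^*]=(-1)^{k+1}\star_E[L_\nabla,{}^\A\nabla]\star_E=0$ from $\Lambda_\nabla=\star_E L_\nabla\star_E$ (itself pointwise from $\Lambda=\star L\star$). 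Beyond this point the whole argument is sign bookkeeping in the Jacobi manipulations, which is exactly why it is convenient to absorb the connection into the tensorial operator $\Theta$ and lean throughout on the coefficient-free identities of Proposition \ref{AlgebroidKahlerWeil} and Lemma \ref{AlgebroidCommutators}.
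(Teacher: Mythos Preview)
Your proposal is correct and essentially carries out what the paper's one-line proof (``follows similarly to that of Lemma \ref{AlgebroidCommutators}'') leaves implicit: you reduce each relation to the coefficient-free identities of Proposition \ref{AlgebroidKahlerWeil} and Lemma \ref{AlgebroidCommutators}, with the connection absorbed into a local $1$-form term $\Theta$ or, equivalently, handled via $\star_E$. The $\star_E$ route you sketch is exactly the analogue of how the paper proves $[\Lambda,d_\A^\star]=0$ in Lemma \ref{AlgebroidCommutators}(4), so there is no genuine methodological difference---you have simply supplied the details the paper omits.
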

\begin{proof}
    The proof follows similarly to that of Lemma \ref{AlgebroidCommutators}.
\end{proof}
\begin{definition}
    Let $(\A, \rho , M)$ be a symplectic Lie algebroid of rank $2m$, $E \ra M$ be a representation of $\A$ with connection ${}^\A \nabla$, and $\alpha \in \Omega^*_\A(M,E)$. We say that $\alpha$ is ``${}^\A \nabla$-symplectic Harmonic'' if ${}^\A \nabla \alpha = {}^\A \nabla^* \alpha =0$.
\end{definition}
\begin{corollary}
    Let $(\A \ra M, \rho, \omega)$ be a symplectic Lie algebroid of rank $2m$, with representation $(E,\nabla)$. Then the following are equivalent:
    \begin{enumerate}
        \item The map $[L]^k:H^{m-k}_\A(M,E) \ra H^{m+k}_\A(M,E)$ given by $[\omega]^k \wedge -$ is an isomorphism for all $k$.
        \item The inclusion of complexes $(\ker({}^\A\nabla^*), {}^\A \nabla) \hookrightarrow (\Omega_\A^\bullet(M,E),{}^\A \nabla)$ is a quasi-isomorphism.
        \item The $(\A,E)$-$d\delta$-lemma holds for $\A$.
        \item The $(\A,E)$-$\dee \deebar$-lemma holds for $\A$.
    \end{enumerate}
     and the first three statements hold equivalence weakly, i.e. in degree as in the statement of Theorem $\ref{WeakTheorem}$.
\end{corollary}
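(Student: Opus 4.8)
The statement is a direct transcription of Theorem \ref{QuadEquiv} (and of Theorem \ref{WeakTheorem} for the degreewise version) along the representation built in the preceding proposition, so the plan is essentially bookkeeping. First I would record that $\Omega_\A^\bullet(M,E) = \bigoplus_{k=0}^{2m}\Omega_\A^k(M,E)$ is an object of $\mathscr{C}_m$: it is graded by form-degree, concentrated in degrees $0$ through $2m$ (since $\A$ has fiber dimension $2m$, so $\bigwedge^k\A^*=0$ for $k>2m$); the counting operator $H_\nabla$ acts on $\Omega_\A^k(M,E)$ as scaling by the integer $k-m$, hence diagonalizably, integrally, and with finite spectrum $\{-m,\dots,m\}$; and the preceding proposition furnishes the full set of $\mathfrak{g}$-relations for $L_\nabla,\Lambda_\nabla,H_\nabla,{}^\A\nabla,{}^\A\nabla^*$. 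Thus $\Omega_\A^\bullet(M,E)\in\mathrm{Ob}(\mathscr{C}_m)$ with weight space $V_i=\Omega_\A^{m+i}(M,E)$, and crucially no compactness or ellipticity hypothesis is needed — that is exactly what the algebraic reformulation buys.

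Next I would match the four conditions under the dictionary $e\mapsto L_\nabla$, $f\mapsto\Lambda_\nabla$, $h\mapsto H_\nabla$, $d\mapsto{}^\A\nabla$, $\delta\mapsto{}^\A\nabla^*$. Because of the degree shift, $H^i\big(\Omega_\A^\bullet(M,E),{}^\A\nabla\big)=H_\A^{m+i}(M,E)$, so the isomorphism $[e^k]\colon H^{-k}\to H^{k}$ of Theorem \ref{QuadEquiv}(2) becomes
\begin{equation*}
[L_\nabla]^k\colon H_\A^{m-k}(M,E)\longrightarrow H_\A^{m+k}(M,E),\qquad [\omega]^k\wedge -,
\end{equation*}
which is condition (1) of the corollary (trivially true, both sides vanishing, once $k>m$). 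Condition (3) of Theorem \ref{QuadEquiv} is verbatim the quasi-isomorphism statement in condition (2). Conditions (1) and (4) of Theorem \ref{QuadEquiv} unwind, respectively, to the $(\A,E)$-$d\delta$-lemma
\begin{equation*}
\im({}^\A\nabla)\cap\ker({}^\A\nabla^*)=\im({}^\A\nabla^*)\cap\ker({}^\A\nabla)=\im({}^\A\nabla\,{}^\A\nabla^*)
\end{equation*}
and, after forming the operators $\dee,\deebar$ on primitive $\A$-forms with coefficients exactly as in Section 2.5, the $(\A,E)$-$\dee\deebar$-lemma — conditions (3) and (4). The degreewise equivalence of the first three conditions is the image of Theorem \ref{WeakTheorem} under the same dictionary.

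The only point requiring any care is the bookkeeping itself: the shift by $m$ between form-degree and $h$-weight, and the verification that ${}^\A\nabla^*=[\Lambda_\nabla,{}^\A\nabla]$ really plays the role of the abstract $\delta$ (including sign conventions), both of which are already discharged in the preceding proposition. I do not expect any genuine obstacle: once membership in $\mathscr{C}_m$ is in hand, the corollary is a one-line consequence of Theorems \ref{QuadEquiv} and \ref{WeakTheorem}.
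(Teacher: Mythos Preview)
Your proposal is correct and matches the paper's own argument essentially verbatim: the paper simply states that the corollary ``is the statement of Theorems \ref{WeakTheorem} and \ref{QuadEquiv}'' applied through the $\mathfrak{g}$-module structure on $\Omega_\A^\bullet(M,E)$ supplied by the preceding proposition. Your extra care about the degree shift $V_i=\Omega_\A^{m+i}(M,E)$ and the dictionary is exactly the bookkeeping the paper leaves implicit.
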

The above corollary is the statement of Theorems \ref{WeakTheorem} and \ref{QuadEquiv}. Taking $E$ to be the trivial line bundle recovers Theorem \ref{AlgebroidQuad}.

\begin{remark}
    It would be of interest to explore this result on the line bundle
\begin{equation*}
    E = Q_\A =  \bigwedge^{\text{top}}\A^* \otimes \bigwedge^{\text{top}}T^*M.
\end{equation*}
This was originally considered in \cite{evens1996transversemeasuresmodularclass}, where they showed it is always a representation of $\A$.
\end{remark} 

\section{Examples}
\subsection{Kodaria-Thurston Manifold}
 One of the first examples of a symplectic manifold that does not admit a K\"ahler structure is given by the ``Kodaira-Thurston'' manifold $KT^4$ which is constructed as follows:
Take $\R^4$ with coordinates $(x_1,x_2,x_3,x_4)$ and choose $a,b,c,d \in \Z$. Then consider the quotient of $\R^4$ by identifying
    \begin{equation*}
        (x_1,x_2,x_3,x_4) \sim (x_1+a,x_2+b,x_3+c, x_4+d-bx_3).
    \end{equation*}
    The resulting quotient is the connected smooth manifold $KT^4$. The chosen basis of one forms $\{e_i\}_{i=1}^4 $ is given by
    \begin{equation*}
    e_1 = dx_1 \qquad e_2=dx_2 \qquad e_3 = dx_3 \qquad e_4 = dx_4 + x_2dx_3.
    \end{equation*}
    We have a symplectic structure on $KT^4$ given by $ \omega = e_1 \wedge e_2 + e_3 \wedge e_4$, and the deRham cohomology of $KT^4$ has the following generators:
    \begin{align*}
    H_{dR}^0(KT^4) &= \langle [1] \rangle, & H_{dR}^1(KT^4) &= \langle [e_1], [e_2], [e_3] \rangle, \\
    H_{dR}^2(KT^4) &= \langle \omega , [e_{12} - e_{34}] , [e_{13}], [e_{24}] \rangle, & H_{dR}^3(KT^4) &=\langle [\omega \wedge e_1] ,[\omega \wedge e_2],[\omega \wedge e_4]\rangle, \\
    H_{dR}^4(KT^4)&= \langle [\frac{1}{2}\omega^2] \rangle.
\end{align*}
The two Lefschetz maps of interest are:
\begin{align*}
    [L]:H_{dR}^1(KT^4) \ra H_{dR}^3(KT^4)  \qquad  [L]^2:H_{dR}^0(KT^4) \ra H_{dR}^4(KT^4).
\end{align*}
The map $[L]^2$ is clearly an isomorphism, but $[L]$ is not as it has nontrivial kernel. This satisfies the criteria of Theorem \ref{AlgebroidQuad} of degree $s=1$ where we take $\A=TM$, the identity as the anchor, and the differential as the classical deRham differential.
\subsection{Symplectic Lie Algebras of Dimension Four}
If we consider symplectic Lie algebras as symplectic Lie algebroids over the one-point space, the Lie algebroid cohomology is exactly the Chevalley-Eilenberg cohomology, and our theorem can be checked by examining the surjectivity of the maps $[L]^k:H^{m-k}(\mathfrak{g}) \ra H^{m+k}(\mathfrak{g})$. For clarification sake, by a symplectic Lie algebra we mean a (real) Lie Algebra $\mathfrak{g}$ equipped with a nondegenerate $\omega \in \bigwedge^2 \mathfrak{g}^*$ that is closed under the Chevalley-Elienberg differential. These are also referred to as \textit{quasi-Frobenius Lie algebras}. The 2-dimensional case has only two isomorphism classes; the abelian one which satisfies Theorem \ref{AlgebroidWeak} and the other one which does not. Things get more interesting in the 4-dimensional case, where the number of symplectomorphism classes grows significantly. In these case we need to check the following two maps in cohomology in the diagram:
\[\begin{tikzcd}[ampersand replacement=\&]
	0 \& {H^0 (\mathfrak{g})} \& {H^1 (\mathfrak{g})} \& {H^2( \mathfrak{g})} \& {H^3(\mathfrak{g})} \& {H^4( \mathfrak{g})} \& 0
	\arrow[from=1-1, to=1-2]
	\arrow[from=1-2, to=1-3]
	\arrow[from=1-3, to=1-4]
	\arrow[from=1-4, to=1-5]
	\arrow[from=1-5, to=1-6]
	\arrow[from=1-6, to=1-7]
	\arrow["{[L]^1}"{description}, bend left=30, from=1-3, to=1-5]
	\arrow["{[L]^2}"{description}, bend left=30, from=1-2, to=1-6]
\end{tikzcd}\]
 A result by Ovando \cite{ovando2004dimensional} gives a classification of all 4-dimensional real solvable symplectic Lie algebras up to symplectomorphism. Recomputing the cohomology of these classes we can check by hand which maps $[L]^i$ are isomorphisms, surjections, or neither. Using the notation of Ovando and numbering for different symplectic forms, we can check the maps in cohomology which results in the following table where $s$ denotes the $s$ as in Theorem \ref{AlgebroidWeak}, i.e. $s=2$ is the Lefschetz condition. 

\begin{table}[h]
\centering
\begin{minipage}{0.48\textwidth}
\centering
\begin{tabular}{|c|c|c|c|}
\hline
Lie Algebra & $s$ & $[L]$ & $[L]^2$ \\ \hline
$\mathfrak{rh}_3$ & 1 & Neither & Isomorphism\\ \hline
$\mathfrak{rr}_{3,0}$ &0  & Surjection & Surjection\\ \hline
$\mathfrak{rr}_{3,-1}$ & 2  & Isomorphism & Isomorphism \\ \hline
$\mathfrak{rr}_{3,0}'$ & 2 & Isomorphism & Isomorphism\\ \hline
$\mathfrak{r}_2\mathfrak{r}_2$ & 0  & Surjection & Surjection\\ \hline
$\mathfrak{r}_2'$ & 1 & 0 & Isomorphism\\ \hline
$\mathfrak{n}_4$ & 1 & 0 & Isomorphism \\ \hline
$\mathfrak{r}_{4,0}$ & 0 & Surjection & Surjection \\ \hline
$\mathfrak{r}_{4,-1}$ & 0 & 0 & Surjection\\ \hline
$\mathfrak{r}_{4,-1,\beta}, \beta \in (-1,0)$ & 0  & Isomorphism & Surjection \\ \hline
\end{tabular}
\end{minipage}\hfill%
\begin{minipage}{0.48\textwidth}
\centering
\begin{tabular}{|c|c|c|c|}
\hline
Lie Algebra & $s$ & $[L]$ & $[L]^2$ \\ \hline
$\mathfrak{r}_{4,-1,-1}$ & 0 & Neither & Surjection\\ \hline
$\mathfrak{r}_{4,\alpha, - \alpha}, \alpha \in (-1,0)$&0 & Surjection & Surjection  \\ \hline
$\mathfrak{r}_{4,0,\delta}', \delta >0$ &0  & Isomorphism & Surjection \\ \hline
$\mathfrak{d}_{4,1}$ & 0 & Surjection & Surjection\\ \hline
$\mathfrak{d}_{4,2}$, $\omega_1$ & 0  & 0 & Surjection\\ \hline
$\mathfrak{d}_{4,2}$, $\omega_2$ & 0  & Isomorphism & Surjection\\ \hline
$\mathfrak{d}_{4,2}$, $\omega_3$ & 0  & Isomorphism & Surjection \\ \hline
$\mathfrak{d}_{4,\lambda}, \lambda \geq 1/2$ & 0 & Surjection & Surjection \\ \hline
$\mathfrak{d}_{4,\delta}', \delta >0$ & 0 & Surjection & Surjection \\ \hline
$\mathfrak{h}_4$ & 0  & Surjection & Surjection \\ \hline
\end{tabular}
\end{minipage}

\caption{Lie Algebra Lefschetz Maps}
\end{table}

\subsection{Six-Dimensional Nilmanifolds}
Nilmanifolds are homogeneous spaces $N/\Gamma$ where $N$ is a simply connected nilpotent real Lie Group, and $\Gamma$ is a lattice in $N$ of maximal rank. Such spaces have been classified up to isomorphism in dimension $6$, and there are $34$ such isomorphism classes. It is known that exactly 26 of the 34 isomorphism classes admit symplectic structures, see \cite{NilpLieAlg}. The data of such a space is typically presented by two pieces of information: Its class, given as a $6$-tuple, and its symplectic form. For example, the tuple $(0,0,0,0,12,14+25)$ with symplectic form $13+26+45$ conveys the data of the Lie Algebra with dual generators $\{e^i\}_{i=1}^6$ such that
\begin{align*}
    de^1 = de^2&=de^3=de^4=0, \\
    de^5 &= e^1 \wedge e^2, \\
    de^6 &= e^1\wedge e^4 + e^2 \wedge e^5.
\end{align*}
with symplectic form $\omega = e^1 \wedge e^3 + e^2 \wedge e^6 + e^4 \wedge e^5$. Only one of these classes admit a K\"ahler structure, and the other classes have provided interesting (counter)-examples in symplectic geometry. The fact that the Lie algebra data is sufficient to classify these follows from a result of Nomizu.
\begin{theorem*}[Nomizu \cite{MR64057}]
   The deRham complex $\Omega^\bullet(M)$ of a nilmanifold $M=N/\Gamma$ is quasi-isomorphic to the complex $\bigwedge^\bullet \mathfrak{n}^*$ of left-invariant forms on $N$, thus $H_{dR}^\bullet(M) \cong H_{CE}^\bullet(\mathfrak{n})$.
\end{theorem*}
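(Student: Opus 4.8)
The plan is to realize $\bigwedge^\bullet\mathfrak{n}^*$ as the subcomplex of left-invariant forms inside the full de Rham complex of $M$ and to prove that the resulting inclusion is a quasi-isomorphism by induction on the nilpotency step of $N$. First I would record the identification: a left-invariant form on $N$ is determined by its value at the identity, giving a vector-space isomorphism between left-invariant $k$-forms and $\bigwedge^k\mathfrak{n}^*$. A direct computation from the structure equation $d\xi(X,Y)=-\xi([X,Y])$ for $\xi\in\mathfrak{n}^*$ and $X,Y\in\mathfrak{n}$ shows that the restriction of the de Rham differential to left-invariant forms is exactly the Chevalley--Eilenberg differential. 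Since $\Gamma$ acts on $N$ by left translation, left-invariant forms are $\Gamma$-invariant and descend to $M=N/\Gamma$, producing an inclusion of complexes $\iota\colon(\bigwedge^\bullet\mathfrak{n}^*,d_{CE})\hookrightarrow(\Omega^\bullet(M),d)$ whose induced map on cohomology is what I must show is an isomorphism.

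For the base case, when $N$ is abelian $M$ is a torus $T^n$ and both sides compute the exterior algebra $\bigwedge^\bullet(\R^n)^*$, so $\iota$ is an isomorphism. For the inductive step I would exploit the fibrations coming from the center. Since $N$ is nilpotent, $Z(N)$ is nontrivial; by Malcev's theory of lattices in nilpotent Lie groups one may select a one-dimensional central subgroup $\R\subset Z(N)$ such that $\Gamma\cap\R\cong\Z$ is a lattice in it and $\Gamma$ projects to a lattice $\Gamma'$ in the lower-step quotient $N'=N/\R$. This yields a principal circle bundle $S^1\hookrightarrow M\ra M'$ over the nilmanifold $M'=N'/\Gamma'$, mirrored on the algebraic side by the central extension $0\ra\mathfrak{z}\ra\mathfrak{n}\ra\mathfrak{n}'\ra 0$ with $\mathfrak{z}\cong\R$.

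The comparison is then carried out by spectral sequences. The inclusion of left-invariant forms is compatible with both the Serre spectral sequence of the circle bundle and the Hochschild--Serre spectral sequence of the central extension, hence induces a morphism between them. On the $E_2$-page the topological sequence reads $H^p\bigl(M';H^q(S^1)\bigr)$ and the algebraic one $H^p_{CE}\bigl(\mathfrak{n}';\bigwedge^q\mathfrak{z}^*\bigr)$; the inductive hypothesis identifies the base factors, while the explicit computation for $S^1$ (respectively the one-dimensional abelian algebra) identifies the fiber factors, so $\iota$ is an isomorphism on $E_2$. A map of first-quadrant spectral sequences that is an isomorphism on $E_2$ is an isomorphism on the abutments, which closes the induction. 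One could instead avoid the full machinery by comparing the topological Gysin sequence of the circle bundle with its algebraic counterpart and invoking the five lemma.

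The main obstacle is the compatibility of the lattice with the central fibration: one must guarantee that $\Gamma\cap\R$ and the projected $\Gamma'$ are again lattices so that $M'$ is a genuine nilmanifold of lower step. This is precisely where Malcev's rationality results are essential, since an arbitrary central subgroup need not meet $\Gamma$ in a cocompact manner. Once this fibration is in place the spectral-sequence comparison is formal, and the identification $H^\bullet_{dR}(M)\cong H^\bullet_{CE}(\mathfrak{n})$ follows.
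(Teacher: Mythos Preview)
The paper does not prove this statement at all: it is quoted as Nomizu's classical theorem with a citation to \cite{MR64057} and used as a black box to justify computing de Rham cohomology of six-dimensional nilmanifolds via the Chevalley--Eilenberg complex. There is therefore no ``paper's own proof'' to compare against.

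Your proposal is a correct outline of a standard proof, and is in fact close in spirit to Nomizu's original argument, which also proceeds by induction using the fibration of a nilmanifold over a lower-dimensional nilmanifold with torus fiber. A couple of points are worth tightening. First, to guarantee the one-dimensional central subgroup meets $\Gamma$ in a lattice you should take it inside the last nontrivial term $N_c$ of the lower central series rather than in $Z(N)$ generally; Malcev's theorem ensures $\Gamma\cap N_c$ is a lattice in $N_c$, and any rational direction there works. Second, when invoking the Serre spectral sequence you should remark that the local coefficient system $H^q(S^1)$ is constant; this is automatic for a principal $S^1$-bundle, but it is what makes the $E_2$-page literally equal to $H^p(M')\otimes H^q(S^1)$ and hence comparable to the Hochschild--Serre page. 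With those two clarifications the spectral-sequence comparison (or the Gysin/five-lemma variant you mention) goes through and the induction closes.
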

Examining the maps $[L]^k:H^{3-k}(\mathfrak{n}) \ra H^{3+k}(\mathfrak{n})$ where $[\alpha] \mapsto [\omega^k \wedge \alpha ]$ for $k=1,2,3$ allows to check when Theorem \ref{AlgebroidWeak} holds.
The first of these maps $[L]^3$ is assured to be an isomorphism by compactness of the nilmanifold, and the remaining two are explicitly calculable given the standard data of a symplectic nilmanifold described above. By Poincare\'e Duality, the maps $[L]^k$ are between vector spaces of the same finite dimension, and so being surjective is equivalent to being an isomorphism. Only one class will admit all three isomorphisms, as in this setting all three maps being isomorphisms is equivalent to admitting K\"ahler structure \cite{MR993739}. Table \ref{NilTable} lists whether or not a given nilmanifold class has maps $[L]^k$ that are isomorphisms; Yes indicating it's an isomorphism, and No indicating it's not an isomorphism.

\begin{longtable}{|c|c|c|c|c|}
\caption{Six-Dimensional Nilmanifold Lefschetz Maps}
\label{NilTable}\\

\hline
Signature  & Symplectic Form & $[L]$ & $[L]^2$ & $[L]^3$\\
\hline
\endfirsthead

\hline
Signature  & Symplectic Form & $[L]$ & $[L]^2$ & $[L]^3$\\
\hline
\endhead

\hline
\multicolumn{5}{r}{\textit{Continued on next page}} \\
\endfoot

\hline
\endlastfoot

(0,0,12,13,14,15) & 16+34-25 & No & No & Yes\\ \hline
(0,0,12,13,14,23+15) & 16+24+34-26 & No & No& Yes\\ \hline
(0,0,12,13,23,14) & 15+24+34-26 & Yes & No & Yes\\ \hline
(0,0,12,13,23,14-25) & 15+24-35+16  & Yes & No & Yes\\ \hline
(0,0,12,13,23,14+25)  & 15+24+35+16  & Yes  & No & Yes\\ \hline
(0,0,12,13,14+23,24+15)  & $16+2\times 34-25$  & No & No& Yes \\ \hline
(0,0,0,12,13,14+23)  &$16-2\times 34 -25$  & No & No& Yes \\ \hline
(0,0,0,12,13,24) & 26+14+35  & No & No & Yes\\ \hline
(0,0,0,12,13,14)  & 16+24+35 & No & No & Yes\\ \hline
(0,0,0,12,13,23) & 15+24+36 & No & No & Yes\\ \hline
(0,0,0,12,14,15+23) & 13+26-45 & No & No & Yes\\ \hline
(0,0,0,12,14+15+23+24) & 13+26-45 & No & No & Yes\\ \hline 
(0,0,0,12,14,15+24) & 13+26-45 & No & No & Yes\\ \hline
(0,0,0,12,14,15) & 13+26-45 & No & No & Yes\\ \hline
(0,0,0,12,14,13+42) & 15+26+34 & No & No & Yes\\ \hline
(0,0,0,12,14,23+24) & 16-34+25 & No & No & Yes\\ \hline 
(0,0,0,12,14,15+34) & 16+35+24 & No & No & Yes\\ \hline
(0,0,0,12,14+23,13+42) & $15+2 \times 26+34$ & No & No & Yes\\ \hline
(0,0,0,0,12,15) & 16+25+34 & No & No & Yes\\ \hline
(0,0,0,0,12,14+25) & 13+26+45 & No & No & Yes\\ \hline
(0,0,0,0,12,14+23) & 3+26+45 & No & No & Yes\\ \hline
(0,0,0,0,12,34) & 15+36+24 & No & No & Yes\\ \hline 
(0,0,0,0,12,13) & 16+25+34 & No & No & Yes\\ \hline
(0,0,0,0,13+42, 14+23) & 16+25+34 & No & No & Yes\\ \hline
(0,0,0,0,0,12) & 16+23+45 & No & No & Yes\\ \hline
(0,0,0,0,0,0) & 12+34+56 & Yes & Yes & Yes \\ \hline

\end{longtable}

\subsection{\texorpdfstring{$E$}{}-manifolds}
$E$-manifolds are a generalization of a collection of different geometric settings: $b$-manifolds, $b^m$-manifolds, $c$-manifolds, and Regular Foliations. Miranda and Scott have thoroughly explored the cohomology and symplectic geometry of $E$-manifolds in many settings, e.g. \cite{Miranda_2020}. The definition given in the above paper of Miranda and Scott of an $E$-manifold is restated here for convenience.
\begin{definition}
Let $E$ be a locally free submodule of the $C^{\infty}$ module $\mathfrak{X}(M)$ of vector fields on $M$. By the Serre-Swan theorem, there is an $E$-tangent bundle ${ }^E T M$ whose sections (locally) are sections of $E$, and an $E$-cotangent bundle ${ }^E T^* M:=\left({ }^E T M\right)^*$. We will call the global sections of $\Lambda^p\left({ }^E T^* M\right)$ $E$-forms of degree $p$, and denote the space of all such sections by ${ }^E \Omega^p(M)$. If $E$ satisfies the involutivity condition $[E, E] \subseteq E$, there is a differential $d:{ }^E \Omega^p(M) \rightarrow{ }^E \Omega^{p+1}(M)$ given by
$$
\begin{aligned}
{}^E d \eta\left(V_0, \ldots, V_p\right)= & \sum_i(-1)^i V_i\left(\eta\left(V_0, \ldots, \hat{V}_i, \ldots, V_p\right)\right) \\
& +\sum_{i<j}(-1)^{i+j} \eta\left(\left[V_i, V_j\right], V_0, \ldots, \hat{V}_i, \ldots, \hat{V}_j, \ldots, V_p\right) .
\end{aligned}
$$
\end{definition}
The cohomology of this complex is known as $E$-cohomology and is the same as Lie algebroid cohomology if we view this as a Lie Algebroid with anchor given by inclusion. If we have an $E$-closed nondegenerate $E$-$2$-form $\omega$, we call this an $E$-symplectic form and the triple $(E,M,\omega)$ an $E$-symplectic manifold. These are symplectic Lie Algebroids. The cohomology of such an object in general is rather difficult to compute, however Miranda and Scott showed the following:
\begin{theorem}[Miranda and Scott \cite{Miranda_2020}]
    Let $M=\R^2$, and let $E$ be the involutive subbundle of $\mathfrak{X}(\R^2)$ generated by
    \begin{align*}
        v_1 &= x \p_x + y \p_y, \quad
        v_2 = -y \p_x + x \p y.
    \end{align*}
    with dual bundle generated by
    \begin{align*}
        v_1^* &= \frac{xdx + ydy}{x^2+y^2}, \quad
        v_2^* = \frac{-ydx+xdy}{x^2+y^2}.
    \end{align*}
    Then the $E$-cohomology or Lie algebroid cohomology is given by
    \begin{equation*}
        {}^EH^k(M) = \begin{cases}
            \R \quad &i=0,2 \\
            \R^2 &i=1 \\
            0 &i\geq 3
        \end{cases}
    \end{equation*}
\end{theorem}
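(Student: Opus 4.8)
The plan is to write out the $E$-de Rham complex explicitly in the frame $\{v_1^*,v_2^*\}$ and reduce the computation to a single ordinary differential operator by exploiting the rotational symmetry of the whole configuration. First I would record the basic structure. Abbreviate $\alpha = v_1^*$ and $\beta = v_2^*$; since $E$ has rank $2$ the complex is concentrated in degrees $0,1,2$, so ${}^EH^k = 0$ for $k\geq 3$ is immediate. In polar coordinates $x=r\cos\theta$, $y=r\sin\theta$ one has $v_1 = r\p_r$, $v_2 = \p_\theta$, $\alpha = dr/r$, $\beta = d\theta$, and a direct computation gives $[v_1,v_2]=0$, so $\alpha$ and $\beta$ are ${}^Ed$-closed. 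Writing a $0$-form as $f$, a $1$-form as $f\alpha+g\beta$, and a $2$-form as $h\,\alpha\wedge\beta$ with all coefficients in $C^\infty(\R^2)$, the differential becomes
\[
{}^Ed f = (v_1 f)\alpha + (v_2 f)\beta, \qquad {}^Ed(f\alpha+g\beta) = (v_1 g - v_2 f)\,\alpha\wedge\beta.
\]

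Next I would split the complex using the rotation action. The circle acts on $\R^2$ by rotation about the origin, and $v_1,v_2,\alpha,\beta$ are all invariant, so averaging over $\theta$ gives an $SO(2)$-equivariant, $C^\infty$-preserving projection decomposing each form space into its rotation-invariant (radial) part and its $\theta$-mean-zero part. Every operator commutes with this projection, so the complex splits as a direct sum of the radial subcomplex and the mean-zero subcomplex. On the mean-zero part $v_2=\p_\theta$ is invertible, with inverse given by integration in $\theta$ (a smooth operation leaving the order of vanishing in $r$ untouched); using this I would show the mean-zero subcomplex is acyclic — top-degree surjectivity via $g=0$, $f=-\p_\theta^{-1}h$, middle-degree exactness via $u=\p_\theta^{-1}g$, and injectivity in degree $0$. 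Thus all cohomology is carried by the radial subcomplex.

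Then I would compute the radial subcomplex, which in the radial variable reduces to the single operator $D\colon \phi\mapsto r\phi'(r)$ acting on smooth even functions of $r$ (equivalently smooth functions of $x^2+y^2$), namely
\[
0 \to C^\infty_{\mathrm{rad}} \xrightarrow{\,\phi\mapsto (D\phi)\alpha\,} C^\infty_{\mathrm{rad}}\,\alpha \oplus C^\infty_{\mathrm{rad}}\,\beta \xrightarrow{\,\bar f\alpha+\bar g\beta\mapsto (D\bar g)\,\alpha\wedge\beta\,} C^\infty_{\mathrm{rad}}\,\alpha\wedge\beta \to 0.
\]
The key arithmetic fact is that $\ker D = \R$ (constants), while $\im D$ is exactly the smooth even functions vanishing at the origin, so $\coker D \cong \R$ detected by the value at $r=0$. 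Feeding this in gives $H^0 = \ker D = \R$; $H^1 = (\coker D)\,[\alpha]\oplus \R\,[\beta] = \R^2$, with the two classes represented by $\alpha = dr/r$ and $\beta = d\theta$; and $H^2 = \coker D \cong \R$, detected by the value of $h$ at the origin. Combined with the vanishing of the mean-zero part and of all degrees $\geq 3$, this yields the stated answer.

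The main obstacle is the precise analytic input at the origin, where $E$ degenerates (both $v_1$ and $v_2$ vanish there). Everything hinges on correctly identifying the image and cokernel of $D$: the equation $r\phi'(r)=\psi(r)$ is solvable within $C^\infty_{\mathrm{rad}}$ precisely when $\psi(0)=0$, the obstruction being the logarithmic divergence of $\int \psi(r)/r\,dr$ when $\psi(0)\neq 0$. This single one-dimensional obstruction is what distinguishes ${}^EH^\bullet(\R^2)$ from the de Rham cohomology of the cylinder $\R\times S^1$ to which the complex restricts away from the origin, producing both the extra generator $[dr/r]$ in degree $1$ and the nonzero top cohomology. Care is also needed to justify that the mean-zero inverse $\p_\theta^{-1}$ preserves smoothness across $r=0$, which holds because it is integration in the angular variable alone and hence leaves the radial vanishing order unchanged.
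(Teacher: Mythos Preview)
The paper does not actually prove this statement: it is quoted as a result of Miranda and Scott and used only as input to the subsequent theorem (that $\omega=v_1^*\wedge v_2^*$ is not $E$-exact, whence the Lefschetz map ${}^EH^0\to{}^EH^2$ is an isomorphism). So there is no in-paper argument to compare against.

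That said, your direct computation is correct. The $SO(2)$-splitting into radial and mean-zero parts, the acyclicity of the mean-zero subcomplex via invertibility of $\p_\theta$, and the reduction of the radial subcomplex to the single operator $D=r\p_r$ on smooth even functions all go through; the key analytic input $\ker D=\R$ and $\im D=\{\psi:\psi(0)=0\}$ (hence $\coker D\cong\R$ via evaluation at $0$) is exactly right and is what produces the extra classes $[v_1^*]$ in degree $1$ and $[v_1^*\wedge v_2^*]$ in degree $2$ beyond what one would see on the cylinder. The one point that would merit an extra line in a full write-up is the smoothness of $\p_\theta^{-1}$ through the origin: your heuristic (angular integration leaves the radial vanishing order unchanged) is right in spirit, and becomes rigorous once one realizes $\p_\theta^{-1}$ as convolution along the $SO(2)$-orbit against a bounded kernel and differentiates under the integral in the Cartesian variables.
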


\begin{theorem}
    The symplectic Lie algebroid defined by Miranda and Scott $(E \ra \R^2, \rho = \id, \omega = v_1^* \wedge v_2^*)$ satisfies Theorem \ref{AlgebroidQuad}.
\end{theorem}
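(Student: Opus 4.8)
The plan is to verify condition (2) of Theorem~\ref{AlgebroidQuad}. Since the fiber dimension is $2m=2$ we have $m=1$, so the only Lefschetz map requiring attention is $[L]^1:{}^EH^0(\R^2)\ra {}^EH^2(\R^2)$, the map $[L]^0$ being the identity. By the cohomology computation of Miranda and Scott quoted above, both ${}^EH^0(\R^2)$ and ${}^EH^2(\R^2)$ are isomorphic to $\R$, and ${}^EH^0(\R^2)$ is spanned by the class of the constant function $1$; thus $[L]^1$ sends $[1]\mapsto[\omega]$ and is an isomorphism if and only if $\omega$ is not ${}^Ed$-exact.

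To analyze ${}^Ed$-exactness I would first record the structure of $E$ in the frame $\{v_1,v_2\}$. The vector field $v_1=x\p_x+y\p_y$ is the Euler field and $v_2=-y\p_x+x\p_y$ is the rotation field, and a direct computation gives $[v_1,v_2]=0$. Combined with $v_i^*(v_j)=\delta_{ij}$, the Chevalley--Eilenberg formula yields ${}^Ed v_1^*={}^Ed v_2^*=0$; and since ${}^Ed a=v_1(a)\,v_1^*+v_2(a)\,v_2^*$ for $a\in C^\infty(\R^2)$, one obtains for any $E$-$1$-form $a\,v_1^*+b\,v_2^*$
\[
{}^Ed\big(a\,v_1^*+b\,v_2^*\big)=\big(v_1(b)-v_2(a)\big)\,v_1^*\wedge v_2^*=\big(v_1(b)-v_2(a)\big)\,\omega .
\]
Hence $\omega$ is ${}^Ed$-exact precisely when the equation $v_1(b)-v_2(a)=1$ has a solution with $a,b\in C^\infty(\R^2)$.

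The hard part is to rule this out, since $\R^2$ is noncompact and one cannot argue by integrating $\omega$. The approach I would take is an averaging argument: in polar coordinates $v_1=r\,\p_r$ and $v_2=\p_\theta$, so the equation becomes $r\,\p_r b-\p_\theta a=1$. Averaging over $\theta\in[0,2\pi)$ annihilates $\p_\theta a$ by periodicity and leaves $r\,\frac{d}{dr}\overline b(r)=1$ for the angular average $\overline b(r)=\frac{1}{2\pi}\int_0^{2\pi}b(r,\theta)\,d\theta$, forcing $\overline b(r)=\log r+\mathrm{const}$. Since $b$ is smooth on all of $\R^2$, its angular average is bounded near the origin, indeed extends smoothly across $r=0$, which $\log r$ does not; this contradiction shows $\omega$ is not ${}^Ed$-exact. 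Therefore $[\omega]\neq 0$, $[L]^1$ is an isomorphism, and by Theorem~\ref{AlgebroidQuad} all four equivalent conditions hold. The one technical point to spell out is the smoothness of the angular average at the origin, which follows by expanding a smooth function in $x,y$ and observing that the $\theta$-odd monomials average to zero.
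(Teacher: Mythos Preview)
Your argument is correct, but it is considerably more elaborate than the paper's. You reduce to the same equation $v_1(b)-v_2(a)=1$ and then invoke polar coordinates and an angular averaging to force $\overline b(r)=\log r+\mathrm{const}$, which clashes with smoothness at the origin. The paper dispatches this step in one line: since $v_1=x\p_x+y\p_y$ and $v_2=-y\p_x+x\p_y$ both vanish at $(0,0)$, the left-hand side $v_1(b)-v_2(a)$ evaluates to $0$ there, while the right-hand side is $1$. So the ``hard part'' you flagged is in fact immediate once one notices the common zero of the frame. Your averaging argument is a perfectly valid alternative and would generalize to situations where the frame has no common zero but the anchor is still degenerate along a hypersurface; here, though, the pointwise evaluation is the shortest route.
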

\begin{proof}
    We only need to check that the map $[L]:{}^EH^0(\R^2) \ra {}^EH^2(\R^2)$ is an isomorphism, which amounts to showing that $\omega = v_1^* \wedge v_2^*$ is not an exact form. Suppose towards a contradiction that it is, i.e. there are $f,g \in C^\infty(\R^2)$ such that $\omega = {}^Ed(fv_1^* + g v_2^*)$. Unfolding the definition this implies that
    \begin{equation*}
        \omega = (v_1(g) - v_2(f))\omega
    \end{equation*}
    which means for all $(x,y) \in \R^2$ we have that 
    \begin{equation*}
        v_1(g)-v_2(f)=1.
    \end{equation*}
    Equivalently
    \begin{equation*}
        \left(\frac{\p g}{\p x} - \frac{\p f }{\p y}\right)x+ \left( \frac{\p g}{\p x} + \frac{\p f}{\p x} \right)y=1,
    \end{equation*}
    however at $(0,0)$ this fails, so $\omega$ is not exact.
\end{proof}
\begin{remark}
    The above work does not restrict to just $E$-manifolds though. Theorem \ref{AlgebroidQuad} applies to the setting of symplectic $b^m$-manifolds for any $m \in \Z_{>0}$ as well, since the $b^m$-tangent bundle in these cases is a symplectic Lie algebroid. For more on these see \cite{Guillemin_2014}. 
\end{remark}

\subsection{K\"ahler Lie Algebroids}
There has been recent attention towards K\"ahler Lie Algebroids, see \cite{hu2024kodairavanishingtheoremskahler}. By the Hard Lefschetz Theorem, K\"alher manifolds are always Lefschetz manifolds. This relationship still holds true for a suitable class of K\"ahler Lie algebroids:
\begin{definition}
    A ``K\"ahler Lie algebroid'' $(\A \ra M, \rho,g)$ is a Lie algebroid with a Hermitian metric $g$ whose associated canonical $(1,1)$-$\A$-form $\omega \in \Omega_\A^{1,1}(M)$ is closed.
\end{definition}
\begin{corollary}
    K\"ahler Lie algebroids are symplectic Lie algebroids.
\end{corollary}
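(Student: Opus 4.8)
The plan is to unwind the definition of a Kähler Lie Algebroid and check, directly as in the classical manifold case, that the canonical form $\omega$ satisfies the two requirements of an $\A$-symplectic form: $d_\A$-closedness and nondegeneracy.

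First I would make the auxiliary data explicit. A Hermitian metric $g$ on $\A \ra M$ packages an almost complex structure $J \in \Gamma(\End(\A))$ with $J^2 = -\id_\A$ together with a fiberwise positive-definite symmetric bilinear form $g$ compatible with $J$ in the sense that $g(JX,JY) = g(X,Y)$ for all $X,Y \in \Gamma(\A)$; the associated canonical $(1,1)$-$\A$-form is $\omega(X,Y) = g(JX,Y)$. The first routine verification is that $\omega$ is genuinely an element of $\Omega^2_\A(M)$, i.e. skew-symmetric: using the $J$-invariance of $g$ and $J^2 = -\id_\A$ one computes $\omega(Y,X) = g(JY,X) = g(J(JY),JX) = -g(Y,JX) = -g(JX,Y) = -\omega(X,Y)$.

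Next I would establish nondegeneracy. Fix $p \in M$ and suppose $X \in \A_p$ satisfies $\iota_X\omega = 0$; evaluating on $JX$ gives $0 = \omega(X,JX) = g(JX,JX) = g(X,X)$, where the last equality is $J$-invariance of $g$, and positive-definiteness of $g$ forces $X = 0$. Hence the bundle map $\flat\colon \Gamma(\A) \ra \Gamma(\A^*)$, $X \mapsto \iota_X\omega$, is fiberwise injective, so $\omega$ is nondegenerate. Since $d_\A\omega = 0$ is part of the hypothesis defining a Kähler Lie Algebroid, $\omega$ is a $d_\A$-closed, nondegenerate element of $\Omega^2_\A(M)$, i.e. an $\A$-symplectic form, and therefore $(\A \ra M, \rho, \omega)$ is a Symplectic Lie Algebroid. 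In particular the earlier corollary then applies, so the fiber dimension of $\A$ is even — as it must be for $J$ to exist — and $\A$ is orientable.

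There is no genuine obstacle here: the statement is a direct transcription of the classical fact that a Kähler manifold is symplectic, and the only mild point of care is pinning down precisely what "Hermitian metric on a Lie Algebroid" means (namely that it carries a compatible complex structure $J$), after which nondegeneracy is exactly the one-line positivity argument $\omega(X,JX) = g(X,X)$ given above.
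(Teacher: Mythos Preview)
Your argument is correct and is exactly the natural one: closedness is part of the definition, and nondegeneracy follows from positive-definiteness of $g$ via $\omega(X,JX)=g(X,X)$. The paper itself gives no proof of this corollary, treating it as immediate from the definition, so there is nothing to compare against beyond noting that your unwinding is the standard verification the paper leaves implicit.
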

Throughout this section, let $J$ be the associated complex structure for $\A$. Reconsidering the construction of $d_\A^\star$ with the presence of a complex structure reveals that
\begin{equation*}
    J d_\A^\star J^{-1} = d_\A^\dagger 
\end{equation*}
where $d_\A^\dagger$ is the metric adjoint to $d_\A$, for details see \cite{Shlomonotes}.
\begin{theorem}
    K\"ahler Lie algebroids which admit harmonic representatives for every $\A$-cohomology class in the Riemannian sense are Lefschetz algebroids.
\end{theorem}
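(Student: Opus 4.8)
The plan is to build an honest $\mathfrak{sl}_2(\R)$--module out of the Riemannian--harmonic $\A$--forms and then transport the Hard Lefschetz isomorphism from it to $\A$--cohomology, thereby verifying condition (2) of Theorem~\ref{AlgebroidQuad}.

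Write $d_\A = \partial_\A + \bar\partial_\A$ for the type decomposition induced by the complex structure $J$, let $d_\A^\dagger$ be the Hermitian adjoint of $d_\A$ (so that $d_\A^\dagger = J d_\A^\star J^{-1}$ as recorded above), and let $\Delta_{d_\A} = d_\A d_\A^\dagger + d_\A^\dagger d_\A$. The first step is to set up the Kähler identities on the algebroid, exactly as in \cite{hu2024kodairavanishingtheoremskahler} and \cite{Shlomonotes}: one obtains $\Delta_{d_\A} = 2\Delta_{\bar\partial_\A} = 2\Delta_{\partial_\A}$, and from this that $\Delta_{d_\A}$ commutes with the Lefschetz triple $L,\Lambda,H$ as well as with $J$. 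Here $L$ commutes with $\partial_\A$ and $\bar\partial_\A$ because $\omega$ is $d_\A$--closed of type $(1,1)$, so $\partial_\A\omega = \bar\partial_\A\omega = 0$; and $\Delta_{d_\A}$ commutes with $J$ because, being twice $\Delta_{\bar\partial_\A}$, it preserves $(p,q)$--type. All of this is local, so no compactness is used.

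Next, set $\mathcal{H}^\bullet := \ker \Delta_{d_\A} \subseteq \Omega_\A^\bullet(M)$. Since $\Delta_{d_\A}$ commutes with $L,\Lambda,H$, these operators preserve $\mathcal{H}^\bullet$; and any $\alpha \in \mathcal{H}^\bullet$ satisfies $d_\A\alpha = 0$ and $d_\A^\dagger\alpha = 0$, hence also $d_\A^\star\alpha = J^{-1}d_\A^\dagger(J\alpha) = 0$, because $J\alpha \in \mathcal{H}^\bullet$ is again $d_\A^\dagger$--closed. Thus $d_\A$ and $d_\A^\star$ vanish identically on $\mathcal{H}^\bullet$, so $\mathcal{H}^\bullet$ is a $\mathfrak{g}$--submodule of $\Omega_\A^\bullet(M)$; as $H$ acts on $\mathcal{H}^k$ as multiplication by $k-m$, it is an object of $\mathscr{C}_m$ with vanishing $d$ and $\delta$. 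Viewing $\mathcal{H}^\bullet$ as an $\mathfrak{sl}_2(\R)$--module, the structure proposition that $e^i:V_{-i}\to V_i$ is a bijection gives that $L^k : \mathcal{H}^{m-k} \to \mathcal{H}^{m+k}$ is an isomorphism for all $0 \le k \le m$.

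Finally, the inclusion $\mathcal{H}^\bullet \hookrightarrow (\Omega_\A^\bullet(M),d_\A)$ induces maps $\mathcal{H}^j \to H_\A^j(M)$ which are surjective by hypothesis (every $\A$--cohomology class admits a Riemannian--harmonic representative) and injective by the standard Hodge argument ($\langle\alpha,\alpha\rangle = \langle d_\A\beta,\alpha\rangle = \langle\beta, d_\A^\dagger\alpha\rangle = 0$ whenever $\alpha$ is harmonic and exact), hence isomorphisms. Because $L^k\alpha$ is again harmonic and $[L^k\alpha] = [L]^k[\alpha]$, the evident square identifying $L^k$ on $\mathcal{H}^\bullet$ with $[L]^k$ on $H_\A^\bullet(M)$ commutes with three of its four sides isomorphisms, so $[L]^k : H_\A^{m-k}(M) \to H_\A^{m+k}(M)$ is an isomorphism for every $k$. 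By Theorem~\ref{AlgebroidQuad} the algebroid is a Lefschetz Algebroid. The main obstacle is the first step — pinning down the Kähler identities for Lie algebroids carefully enough that $\Delta_{d_\A}$ is seen to commute with $L$, $\Lambda$ and $J$; everything afterward is formal. A secondary subtlety is the injectivity of $\mathcal{H}^j \to H_\A^j(M)$ for non-compact $M$, which should be regarded as part of what ``admitting Riemannian--harmonic representatives'' provides, e.g.\ via uniqueness of the harmonic representative.
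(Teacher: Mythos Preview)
Your argument is correct and proceeds by a different route from the paper's. The paper's proof is essentially a one-line application of the identity $d_\A^\dagger = J d_\A^\star J^{-1}$ recorded just before the theorem: choose a Riemannian-harmonic representative $\alpha$ of an arbitrary class; since $J$ acts by the scalar $i^{p-q}$ on $(p,q)$-forms, $d_\A^\dagger\alpha=0$ forces $d_\A^\star\alpha=0$, so $\alpha$ is already symplectic harmonic, and the paper invokes the Brylinski condition. You instead develop the full K\"ahler-identity package so that $\Delta_{d_\A}$ commutes with $L,\Lambda,H,J$, identify $\mathcal{H}^\bullet=\ker\Delta_{d_\A}$ as an object of $\mathscr{C}_m$ on which $d$ and $\delta$ act trivially, read off $L^k:\mathcal{H}^{m-k}\xrightarrow{\sim}\mathcal{H}^{m+k}$ from pure $\mathfrak{sl}_2$ representation theory, and transport this to $H_\A^\bullet(M)$ via $\mathcal{H}^\bullet\cong H_\A^\bullet(M)$. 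Both approaches hinge on the same identity (you use it to show $d_\A^\star$ vanishes on $\mathcal{H}^\bullet$). The paper's route is far shorter; yours costs the K\"ahler identities and the commutation of $\Delta_{d_\A}$ with $J$, but it lands directly on condition~(2) of Theorem~\ref{AlgebroidQuad} as an \emph{isomorphism}, and your discussion of the injectivity of $\mathcal{H}^j\to H_\A^j(M)$ makes explicit a point the paper's argument leaves implicit --- exhibiting symplectic-harmonic representatives alone is only the surjective half of condition~(1).
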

\begin{proof}
    It suffices to show that every cohomology class admits a symplectic harmonic representative. Fix any class $[\alpha] \in H_\A^{p,q}(M,\C)$, where we've chosen $\alpha$ to be a harmonic representative. Then $d_A \alpha=d_\A^\dagger \alpha =0$, and so
    \begin{equation*}
       0= d_\A^\dagger \alpha = Jd_\A^\star J^{-1}\alpha.
    \end{equation*}
    This forces $d_\A^\star \alpha =0$ up to a nonzero constant, so $\alpha$ is symplectic harmonic.
\end{proof}
\begin{remark}
    When a Kähler Lie algebroid admits a harmonic representative in the Riemannian sense is not immediately obvious, as the complex of $\A$-forms is not always elliptic.
\end{remark}

\printbibliography

\end{document}